\begin{document}

\title{Viscosity solutions of Hamilton-Jacobi equations for
neutral-type systems\thanks{This work is supported by a grant of the RSF no. 21-71-10070,\\ https://rscf.ru/project/21-71-10070/}
}


\author{Anton Plaksin}


\institute{A. Plaksin \at
N.N. Krasovskii Institute of Mathematics and Mechanics of the Ural Branch of the Russian Academy of Sciences, 16 S. Kovalevskaya Str., Yekaterinburg, 620108, Russia;\\
Ural Federal University, 19 Mira street, 620002 Ekaterinburg, Russia \\
\email{a.r.plaksin@gmail.com}
}

\date{Received: date / Accepted: date}

\maketitle

\begin{abstract}
The paper deals with path-dependent Hamilton-Jacobi equations with a coinvariant derivative which arise in investigations of optimal control problems and differential games for neutral-type systems in Hale's form. A~viscosity (generalized) solution of a Cauchy problem for such equations is considered. The existence, uniqueness, and consistency of the viscosity solution are proved. Equivalent definitions of the viscosity solution, including the definitions of minimax and Dini solutions, are obtained. Application of the results to an optimal control problem for neutral-type systems in Hale's form are given.

\keywords{neutral-type systems \and Hamilton-Jacobi equations \and coinvariant derivatives \and viscosity solutions \and minimax solutions \and optimal control problems}
\subclass{49L20 \and 49L25 \and 34K40}
\end{abstract}


\section{Introduction}\label{intro}

The paper aims to develop the viscosity solution theory for path-dependent Hamilton-Jacobi (HJ) equations with a coinvariant derivative which arise in optimal control problems and differential games for neutral-type systems in Hale's form \cite{Hale_1977}.

Initially, the notion of coinvariant derivatives and path-dependent HJ equations with such derivatives were considered in \cite{Kim_1999} to describe infinitesimal properties of value functionals in optimal control problems for time-delay systems. Also, similar derivative notions are known such as Clio derivatives \cite{Aubin_Haddad_2002} and horizontal and vertical derivatives \cite{Dupire_2009} (the connection between these notions was addressed in~\cite{Gomoyunov_Lukoyanov_Plaksin_2021}).  As with HJ equations with partial derivatives, path-dependent HJ equations with coinvariant derivatives may not have a differentiable solution and various approaches to the definition of a generalized solution needed to be examined.

The minimax approach \cite{Subbotin_1980,Subbotin_1984,Subbotin_1995} and its application to differential games  \cite{Krasovskii_Subbotin_1988,Krasovskii_Krasovskii_1995} for time-delay systems were developed in \cite{Lukoyanov_2000,Lukoyanov_2003,Lukoyanov_2010a} (see also \cite{Bayraktar_Keller_2018} for an extension of these results to an infinite dimensional setting). Investigations of the viscosity approach \cite{Crandall_Lions_1983,Crandall_Evans_Lions_1984} to the generalized solution definition for path-dependent HJ equations associated with time-delay systems can be roughly divided into two groups. In the first group \cite{Bayraktar_Keller_2018,Ekren_Touzi_Zhang_2016,Kaise_2015,Kaise_Kato_Takahashi_2018,Lukoyanov_2010b,Pham_Zhang_2014,Soner_1988,Zhou_2019}, modified definitions of the viscosity solution based on some parameterizations were researched. In the second group \cite{Plaksin_2020,Plaksin_2021,Zhou_2021}, more natural definitions of the viscosity solutions are used, but the path-dependent HJ equations are considered on the wider spaces of discontinuous functions.

In contrast to path-dependent HJ equations arising in optimal control and differential game problems for time-delay systems, path-dependent HJ equations associated with the problems for more general neutral-type systems in Hale's form have a new term. The fact that this term is not defined at all point of the functional space and discontinuous at some points of the domain do not allow us to apply the above results and require us to construct the new theory of generalized solutions.

The minimax solution theory for such equations and the application of this theory to differential games for neutral-type systems in Hale's form were investigated in  \cite{Gomoyunov_Plaksin_2019,Lukoyanov_Plaksin_2020,Lukoyanov_Plaksin_2020b,Plaksin_2021b}. The present paper develops the viscosity solution theory for such equations. Following \cite{Plaksin_2020,Plaksin_2021}, we consider the path-dependent HJ equations on the space of discontinuous functions and introduce the definition of a viscosity solution of a Cauchy problem for this equation. We prove that the viscosity solution exists and is unique (see Theorem \ref{teo:viscosity_solution}). We obtain additional equivalent definitions of the viscosity (generalized) solution including the definitions of minimax and Dini solutions (see Theorem \ref{teo:equivalent_solutions}). We also establish that a coinvariant differentiable solution of the Cauchy problem coincides with the viscosity solution and, on the other hand, the viscosity solution at the points of coinvariant differentiability satisfies the HJ equation (see Theorem \ref{teo:classical_and_viscosity_solutions}). Moreover, we consider an application of the results to an optimal control problem for a neutral-type system in Hale's form (see Theorem \ref{teo:application}).

The main idea of the proofs is to obtain the existence and uniqueness of the minimax solution based on results from \cite{Plaksin_2021b} and to establish the equivalence of the minimax and viscosity solutions using the scheme from \cite{Clarke_Ledyaev_1994,Subbotin_1993,Subbotin_1995} (see also \cite{Plaksin_2020}). However, there are several obstacles to implement that. Firstly, since the minimax solution from \cite{Plaksin_2021b} is defined on the space of Lipschitz continuous functions, we prove the certain Lipschitz continuous property of this solution and, using that, extend the minimax solution to the space of piecewise Lipschitz continuous functions (see Theorem \ref{teo:Lip_minimax_solution} and Theorem \ref{teo:minimax_solutions} $(a)\Leftrightarrow (b)$). Secondly, as noted above, the path-dependent HJ equations under considerations have the special term which is not defined on the whole space of piecewise Lipschitz continuous functions. Therefore, we introduce the definitions of a viscosity solution (see Definition \ref{def:viscosity_solution}) only on a certain subspace on which this term is defined. Such a definition seems one of the main features of the present paper, since usually the viscosity solution definitions for even more particular path-dependent HJ equations were considered on whole functional spaces. Besides, such a definition prevent us from applying the scheme from \cite{Subbotin_1993,Subbotin_1995} directly. Nonetheless, we overcome this obstacle by introducing an additional auxiliary definition of minimax solution on the space of continuously differentiable functions ($\mathrm{C}^1$-minimax solution in Definition \ref{def:C1_minimax_solution}). The fact that the $\mathrm{C}^1$-minimax solution coincides with the usual minimax solution (see Theorem \ref{teo:minimax_solutions} $(b)\Leftrightarrow (c)$) completes the proof of the equivalence of the minimax and viscosity solutions (see Theorem \ref{teo:equivalent_solutions}). Thirdly, the extension to the space of piecewise Lipschitz continuous functions does not allow us to expect a continuous solution, as opposed to \cite{Gomoyunov_Plaksin_2019,Lukoyanov_Plaksin_2020,Plaksin_2021b}. Nevertheless, the class of, generally speaking, discontinuous functionals, suggested in the paper, is suitable for obtaining the existence and uniqueness results within this class.

\section{Main results}

\subsection{Functional spaces}
Let $\mathbb R^n$ be the n-dimensional Euclidian space with the inner product $\langle \cdot, \cdot \rangle$ and the norm $\|\cdot\|$. A function $x(\cdot) \colon [a,b) \mapsto \mathbb R^n$ (or $x(\cdot) \colon [a,b] \mapsto \mathbb R^n$) is called piecewise Lipschitz continuous if there exist points $a = \xi_1 < \xi_2 < \ldots < \xi_k = b$ such that the function $x(\cdot)$ is Lipschitz continuous on the interval $[\xi_i,\xi_{i+1})$ for each $i \in \overline{1,k-1}$.
Note that such a function $x(\cdot)$ is right continuous on the interval $\xi \in [a,b)$ and has a finite left limit $x(\xi-0)$ for any $\xi \in (a,b]$.
Denote by $\mathrm{PLip}([a,b),\mathbb R^n)$ (or $\mathrm{PLip}([a,b],\mathbb R^n)$) the linear space of piecewise Lipschitz continuous functions $x(\cdot) \colon [a,b) \mapsto \mathbb R^n$ (or $x(\cdot) \colon [a,b] \mapsto \mathbb R^n$).

Let $\vartheta, h > 0$. Denote
\begin{equation*}
\mathrm{PLip} = \mathrm{PLip}([-h,0),\mathbb R^n),\quad
\mathrm{Lip} = \mathrm{Lip}([-h,0),\mathbb R^n),\quad
\mathrm{C}^1 = \mathrm{C}^1([-h,0),\mathbb R^n).
\end{equation*}
where $\mathrm{Lip}([-h,0),\mathbb R^n)$ and $\mathrm{C}^1([-h,0),\mathbb R^n)$ are the linear spaces of Lipschitz continuous and continuously differentiable functions $x(\cdot) \colon [a,b) \mapsto \mathbb R^n$. Denote
\begin{equation}\label{def:G_s}
\begin{array}{rl}
\mathrm{PLip}_* = & \big\{w(\cdot) \in \mathrm{PLip} \colon \text{there exists } \delta_w > 0 \text{ such that }\\[0.2cm]
& \ \,\, w(\cdot) \text{ is continuously differentiable on } [-h,-h+\delta_w] \big\}.
\end{array}
\end{equation}
Note that the following inclusions are valid:
\begin{equation}\label{space_inclusions}
\mathrm{C}^1 \subset \mathrm{Lip} \subset \mathrm{PLip},\quad \mathrm{C}^1 \subset \mathrm{PLip}_* \subset \mathrm{PLip}.
\end{equation}
For the sake of brevity, for any $w(\cdot) \in \mathrm{PLip}$, we denote
\begin{equation*}
\|w(\cdot)\|_1 = \int_{-h}^0 \|w(\xi)\| \mathrm{d} \xi,\quad \|w(\cdot)\|_\infty = \sup\limits_{\xi\in [-h,0)} \|w(\xi)\|,\quad w(-0) = w(0 - 0).
\end{equation*}
Without loss of generality of the results presented below, we can suppose the existence of $I \in \mathbb N$ such that $\vartheta =  I h$. Define the spaces
\begin{equation}\label{def:G_G_s}
\mathbb G = [0,\vartheta] \times \mathbb R^n \times \mathrm{PLip},\quad \mathbb G_* = \cup_{i=0}^{I-1} (i h, (i+1) h) \times \mathbb R^n \times \mathrm{PLip}_*.
\end{equation}

\subsection{Hamilton-Jacobi equation}

For each $(\tau,z,w(\cdot)) \in \mathbb G$, denote
\begin{equation*}
\begin{array}{rl}
\Lambda(\tau,z,w(\cdot)) = \big\{x(\cdot) \in \mathrm{PLip}([\tau-h,\vartheta],\mathbb R^n) \colon &  x(\tau) = z, \\[0.2cm]
& x(t) = w(t - \tau),\, t \in [\tau-h,\tau)\big\}.
\end{array}
\end{equation*}
Following \cite{Plaksin_2020,Plaksin_2021} (see also \cite{Kim_1999,Lukoyanov_2000}), a functional $\varphi \colon \mathbb G \mapsto \mathbb R$ is called coinvariantly (ci-) differentiable at a point $(\tau,z,w(\cdot)) \in \mathbb G$, $\tau < \vartheta$ if there exist $\partial^{ci}_{\tau,w}\varphi(\tau,z,w(\cdot)) \in \mathbb R$ and $\nabla_z\varphi(\tau,z,w(\cdot)) \in \mathbb R^n$ such that, for every $t \in [\tau,\vartheta]$, $y \in \mathbb R^n$, and $x(\cdot) \in \Lambda(\tau,z,w(\cdot))$, the relation below holds
\begin{equation}\label{def:ci-differentiability}
\begin{array}{c}
\varphi(t,y,x_t(\cdot)) - \varphi(\tau,z,w(\cdot))  =  (t - \tau) \partial^{ci}_{\tau,w}\varphi(\tau,z,w(\cdot)) \\[0.2cm]
+ \langle y - z, \nabla_z \varphi(\tau,z,w(\cdot)) \rangle + o(|t - \tau| + \|y - z\|),
\end{array}
\end{equation}
where $x_t(\cdot)$ denotes the function from $\mathrm{PLip}$ such that $x_t(\xi) = x(t + \xi)$, $\xi \in [-h,0)$ and the value $o(\delta)$ can depend on $x(\cdot)$ and $o(\delta)/\delta \to 0$ as $\delta \to +0$. Then $\partial^{ci}_{\tau,w}\varphi(\tau,z,w(\cdot))$ is called the ci-derivative of $\varphi$ with respect to $\{\tau,w(\cdot)\}$ and $\nabla_z \varphi(\tau,z,w(\cdot))$ is the gradient of $\varphi$ with respect to $z$.

Similarly, the mapping $\mathbb G \ni (\tau,z,w(\cdot)) \mapsto \phi = (\phi_1, \ldots, \phi_n) \in \mathbb R^n$ is called ci-differentiable at a point $(\tau,z,w(\cdot)) \in \mathbb G$, $\tau < \vartheta$, if the functionals $\phi_i\colon \mathbb G \mapsto \mathbb R$, $i=\overline{1,n}$ are ci-differentiable at this point.

Let us fix the functions $g \colon [0,\vartheta] \times \mathbb R^n \mapsto \mathbb R^n$ and $H\colon [0,\vartheta] \times \mathbb R^n \times \mathbb R^n \times \mathbb R^n \mapsto \mathbb R$ and the mapping $\sigma \colon \mathbb R^n \times \mathrm{PLip} \mapsto \mathbb R$ satisfying the the following conditions:
\begin{itemize}

\item[$(g)$] The function $g$ is continuously differentiable.

\item[$(H_1)$] The function $H$ is continuous.
	
\item[$(H_2)$] There exists a constant $c_H > 0$ such that
\begin{equation*}
\big|H(\tau,z,r,s) - H(\tau,z,r,s')\big| \leq c_H \big(1 + \|z\| + \|r\|\big) \|s - s'\|
\end{equation*}
for any $\tau \in [0,\vartheta]$, $z,r,s,s' \in \mathbb R^n$.

\item[$(H_3)$] For every $\alpha > 0$, there exists $\lambda_H = \lambda_H(\alpha) > 0$ such that
\begin{equation*}
\big|H(\tau,z,r,s) - H(\tau,z',r',s)\big|\leq \lambda_H \big(\|z - z'\| + \|r - r'\|\big) \big(1 + \|s\|\big)
\end{equation*}
for any $\tau \in [0,\vartheta]$, $z,r,z',r',s \in \mathbb R^n$: $\max\{\|z\|,\|r\|,\|z'\|,\|r'\|\} \leq \alpha$.

\item[$(\sigma)$]
For every $\alpha > 0$, there exists $\lambda_\sigma = \lambda_\sigma(\alpha) > 0$ such that
\begin{equation*}
\big|\sigma(z,w(\cdot)) - \sigma(z',w'(\cdot))\big| \leq \lambda_\sigma \big(\|z - z'\| + \|w(\cdot) - w'(\cdot)\|_1\big)
\end{equation*}
for any $(z,w(\cdot)),(z',w'(\cdot)) \in P(\alpha)$, where
\begin{equation}\label{def:P}
P(\alpha) = \big\{(z,w(\cdot)) \in \mathbb R^n \times \mathrm{PLip} \colon \|z\| \leq \alpha,\, \|w(\cdot)\|_\infty \leq \alpha\big\}.
\end{equation}
\end{itemize}

Consider the mapping $g_*(\tau,z,w(\cdot)) = g(\tau,w(-h))$, $(\tau,z,w(\cdot)) \in \mathbb G$. Due to condition $(g)$, $g_*$ is ci-differentiable on $\mathbb G_*$ (see (\ref{def:G_G_s})) and
\begin{equation}\label{def:dg_s}
\partial^{ci}_{\tau,w} g_*(\tau,z,w(\cdot)) = G(\tau,w(-h),\mathrm{d}^+ w(-h) / \mathrm{d} \xi),\quad \nabla_z g_*(\tau,z,w(\cdot)) = 0
\end{equation}
for any $(\tau,z,w(\cdot)) \in \mathbb G_*$, where $\mathrm{d}^+ w(-h) / \mathrm{d} \xi$ is the right derivative of the function $w(\xi)$, $\xi \in [-h,0)$ at the point $\xi=-h$ and $G(\tau,x,y) = \partial g(\tau,x) / \partial \tau + \nabla_x g(\tau,x) y$. Since the mapping $g_*$ is determined by the function $g$ and does not depend on $z$, for brevity, we denote
\begin{equation}\label{def:derivative_g}
\partial^{ci}_{\tau,w} g(\tau,w(\cdot)) = \partial^{ci}_{\tau,w} g_*(\tau,z,w(\cdot)).
\end{equation}

For the functional $\varphi \colon \mathbb G \mapsto \mathbb R$, let us consider the Cauchy problem for the HJ equation
\begin{equation}\label{Hamilton-Jacobi_equation}
\begin{array}{rcl}
\partial^{ci}_{\tau,w} \varphi(\tau,z,w(\cdot)) & + & \langle\partial^{ci}_{\tau,w} g(\tau,w(\cdot)), \nabla_z \varphi(\tau,z,w(\cdot)) \rangle \\[0.3cm]
& + & H(\tau,z,w(-h),\nabla_z \varphi(\tau,z,w(\cdot))) = 0,
\end{array}
\ (\tau,z,w(\cdot)) \in \mathbb G_*,
\end{equation}
and the terminal condition
\begin{equation}\label{terminal_condition}
\varphi(\vartheta,z,w(\cdot)) =\sigma(z,w(\cdot)),\quad (z,w(\cdot)) \in \mathbb R^n \times \mathrm{PLip}.
\end{equation}

\begin{remark}
Such Cauchy problems arise in investigations of optimal control problems and differential games for neutral-type systems in Hale's from (see, e.g., \cite{Gomoyunov_Plaksin_2019}). In contrast to Cauchy problems corresponding to time-delay systems \cite{Lukoyanov_2000,Lukoyanov_2003,Lukoyanov_2010a,Lukoyanov_2010b}, the new term $\langle \partial^{ci}_{\tau,w} g(\tau,w(\cdot)), \nabla_z \varphi(\tau,z,w(\cdot)) \rangle$ appears. Since the functional $\partial^{ci}_{\tau,w} g(\tau,w(\cdot))$ depends on the derivative $\mathrm{d}^+ w(-h) / \mathrm{d} \xi$ (see (\ref{def:dg_s}), (\ref{def:derivative_g})), it is not defined at all points of $\mathbb G$ and discontinuous with respect to the uniform norm. Thus, we can not apply results from \cite{Lukoyanov_2000,Lukoyanov_2003,Lukoyanov_2010a,Lukoyanov_2010b} to Cauchy problem (\ref{Hamilton-Jacobi_equation}), (\ref{terminal_condition}). Moreover, we need to consider the HJ equation only on a set on which $\partial^{ci}_{\tau,w} g(\tau,w(\cdot))$ is defined. We choice $\mathbb G_*$ as such a set. Due to condition $(g)$, the value $\partial^{ci}_{\tau,w} g(\tau,w(\cdot))$ is defined on $\mathbb G_*$.
\end{remark}

We search a solution of this problem among the functionals $\varphi\colon \mathbb G \mapsto \mathbb R$ satisfying the following conditions:
\begin{itemize}
\item[$(\varphi_1)$] For each $(\tau,w(\cdot)) \in [0,\vartheta] \times \mathrm{Lip}$, the function $\overline{\varphi}(t) = \varphi(t,w(-0),w(\cdot))$, $t \in [\tau,\vartheta]$ is continuous.

\item[$(\varphi_2)$] For every $\alpha > 0$, there exists $\lambda_\varphi = \lambda_\varphi(\alpha) > 0$ such that
\begin{equation}\label{Phi_lip}
|\varphi(\tau,z,w(\cdot)) - \varphi(\tau,z',w'(\cdot))| \leq \lambda_\varphi \upsilon(\tau,z-z',w(\cdot) - w'(\cdot))
\end{equation}
for any $\tau \in [0,\vartheta]$ and $(z,w(\cdot)), (z',w'(\cdot)) \in P(\alpha)$, where
\begin{equation}\label{def:upsilon}
\upsilon(\tau,z,w(\cdot)) = \|z\| + \|w(\cdot)\|_1 + \|w(-h)\| + \|w(i h - \tau)\|
\end{equation}
in which $i \in \overline{-1,I-1}$ is such that $\tau \in (i h, (i+1) h]$.
\end{itemize}

\begin{remark}\label{rem:conditions}
Note that if $\varphi \colon \mathbb G \mapsto \mathbb R$ satisfies conditions $(\varphi_1)$, $(\varphi_2)$, then the functional $\hat{\varphi}(\tau,w(\cdot)) = \varphi(\tau,w(-0),w(\cdot))$, $(\tau,w(\cdot))\in [0,\vartheta] \times \mathrm{Lip}$ is continuous with respect to uniform norm. Thus, these conditions are consistent with prior works \cite{Gomoyunov_Plaksin_2019,Lukoyanov_Plaksin_2020,Plaksin_2021b} in which solutions of HJ equations for neutral-type systems on the space of Lipschitz continuous functions were considered in the class of continuous functionals. Moreover, these
conditions are an analog of the conditions suggested in \cite{Plaksin_2020,Plaksin_2021}, devoted to viscosity solutions of HJ equations for time-delay systems, with additional terms $\|w(-h)\|$ and $\|w(i h - \tau)\|$.
\end{remark}

\subsection{Viscosity solution}

Denote
\begin{equation*}
\Lambda_0(\tau,z,w(\cdot)) = \big\{x(\cdot) \in \Lambda(\tau,z,w(\cdot)) \colon x(t) = z,\ t \in [\tau,\vartheta]\big\},\quad (\tau,z,w(\cdot)) \in \mathbb G.
\end{equation*}
\begin{remark}
Similar to \cite{Plaksin_2021}, note that if a functional $\varphi \colon \mathbb G \mapsto \mathbb R$
satisfies conditions $(\varphi_1)$, $(\varphi_2)$ and is ci-differentiable at $(\tau,z,w(\cdot)) \in \mathbb G_*$, then the function
\begin{equation}\label{tilde_phi}
\tilde{\varphi}(t,x) = \varphi(t,x,\kappa_t(\cdot)),\quad (t,x) \in \mathbb [\tau,\vartheta] \times \mathbb R^n,\quad \kappa(\cdot) \in \Lambda_0(\tau,z,w(\cdot)),
\end{equation}
has a right partial derivative $\partial^+ \tilde{\varphi}(\tau,z) / \partial \tau$ and a gradient $\nabla_z \tilde{\varphi}(\tau,z)$ at the point $(\tau,z)$, and satisfies the following HJ equation:
\begin{equation*}
\partial^+ \tilde{\varphi}(\tau,z) / \partial \tau
+ \langle\partial^{ci}_{\tau,w} g(\tau,w(\cdot)), \nabla_z \tilde{\varphi}(\tau,z) \rangle
 + H(\tau,z,w(-h),\nabla_z \tilde{\varphi}(\tau,z)) = 0.
\end{equation*}
Thus, we might say that HJ equation with a ci-derivative (\ref{Hamilton-Jacobi_equation}) is locally this HJ equation with partial derivatives.
\end{remark}

Then, based on the classical definition of viscosity solutions \cite{Crandall_Lions_1983}, it leads us in a natural way to the following definition.

\begin{definition}\label{def:viscosity_solution}
A functional $\varphi \colon \mathbb G \mapsto \mathbb R$ is called a viscosity solution of problem (\ref{Hamilton-Jacobi_equation}), (\ref{terminal_condition}) if $\varphi$ satisfies conditions $(\varphi_1)$, $(\varphi_2)$, (\ref{terminal_condition}) and conditions
\begin{subequations}
\begin{gather}
\left\{
\begin{array}{ll}
\text{for every}\ (\tau,z,w(\cdot)) \in \mathbb G_*,\ \psi \in \mathrm{C}^1(\mathbb R \times \mathbb R^n,\mathbb R), \text{ and } \delta > 0,\\[0.2cm]
\text{if}\ \varphi(\tau,z,w(\cdot)) - \psi(\tau,z) \leq \varphi(t,x,\kappa_t(\cdot)) - \psi(t,x)\\[0.2cm]
\text{for any}\ (t,x) \in O_\delta^+(\tau,z),\ \kappa(\cdot) \in \Lambda_0(\tau,z,w(\cdot)), \\[0.2cm]
\text{then}\ \partial \psi(\tau,z) / \partial \tau + \langle \partial^{ci}_{\tau,w} g(\tau,w(\cdot)), \nabla_z \psi(\tau,z) \rangle \\[0.2cm]
\hspace{4cm} + H(\tau,z,w(-h),\nabla_z \psi(\tau,z)) \leq 0,
\end{array} \right.\label{sub_viscosity_solution}\\[0.2cm]
\left\{
\begin{array}{ll}
\text{for every}\ (\tau,z,w(\cdot)) \in \mathbb G_*,\ \psi \in \mathrm{C}^1(\mathbb R \times \mathbb R^n,\mathbb R), \text{ and } \delta > 0,\\[0.2cm]
\text{if}\ \varphi(\tau,z,w(\cdot)) - \psi(\tau,z) \geq \varphi(t,x,\kappa_t(\cdot)) - \psi(t,x)\\[0.2cm]
\text{for any}\ (t,x) \in O_\delta^+(\tau,z),\ \kappa(\cdot) \in \Lambda_0(\tau,z,w(\cdot)), \\[0.2cm]
\text{then}\ \partial \psi(\tau,z) / \partial \tau + \langle \partial^{ci}_{\tau,w} g(\tau,w(\cdot)), \nabla_z \psi(\tau,z) \rangle \\[0.2cm]
\hspace{4cm} + H(\tau,z,w(-h),\nabla_z \psi(\tau,z)) \geq 0.
\end{array}\label{super_viscosity_solution} \right.
\end{gather}
\end{subequations}
Here $O^+_\delta(\tau,z) = \{(t,x) \in [\tau,\tau+\delta] \times \mathbb R^n \colon \|x - z\| \leq \delta\}$, and $\mathrm{C}^1(\mathbb R \times \mathbb R^n, \mathbb R)$ is the space of continuously differentiable functions $\psi \colon \mathbb R \times \mathbb R^n \mapsto \mathbb R$.
\end{definition}
The main result of the paper is the following theorem.
\begin{theorem}\label{teo:viscosity_solution}
There exists a unique viscosity solution $\varphi$ of problem {\rm (\ref{Hamilton-Jacobi_equation}), (\ref{terminal_condition})}.
\end{theorem}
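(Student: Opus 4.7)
The strategy is the classical one suggested in the introduction: prove existence and uniqueness for a more tractable notion (a minimax solution) on a suitable functional space, and then show that the minimax and viscosity notions coincide. First I would invoke the theory from \cite{Plaksin_2021b} to obtain, on the subspace $[0,\vartheta]\times\mathbb R^n\times\mathrm{Lip}$, a continuous minimax solution $\varphi_{\mathrm{Lip}}$ of the Cauchy problem (\ref{Hamilton-Jacobi_equation}), (\ref{terminal_condition}). The next step is to extend $\varphi_{\mathrm{Lip}}$ to a functional $\varphi\colon\mathbb G\mapsto\mathbb R$ on piecewise Lipschitz histories; this is the content of Theorem \ref{teo:Lip_minimax_solution} and the equivalence $(a)\Leftrightarrow(b)$ in Theorem \ref{teo:minimax_solutions}. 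The extension requires first establishing the Lipschitz estimate (\ref{Phi_lip})–(\ref{def:upsilon}) for $\varphi_{\mathrm{Lip}}$, so that one can pass from a Lipschitz history to a piecewise Lipschitz one by approximating each continuity piece; the additional norms $\|w(-h)\|$ and $\|w(ih-\tau)\|$ in (\ref{def:upsilon}) are exactly what is needed to keep the estimate stable under such approximations and to accommodate the discontinuities inherited from the jumps of $w(\cdot)$.

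With a candidate $\varphi$ in hand satisfying $(\varphi_1)$, $(\varphi_2)$ and (\ref{terminal_condition}), I would check the two viscosity inequalities (\ref{sub_viscosity_solution}), (\ref{super_viscosity_solution}). Since the definition tests $\varphi$ only on $(\tau,z,w(\cdot))\in\mathbb G_*$ and only against smooth $\psi\in\mathrm{C}^1(\mathbb R\times\mathbb R^n,\mathbb R)$ along frozen histories $\kappa(\cdot)\in\Lambda_0(\tau,z,w(\cdot))$, the direct Subbotin-type reduction does not immediately apply to the minimax notion that lives on $\mathrm{PLip}$. To bridge this, I would introduce the auxiliary $\mathrm{C}^1$-minimax notion (Definition \ref{def:C1_minimax_solution}) in which test histories are required to be continuously differentiable near $-h$, so that $\partial^{ci}_{\tau,w} g(\tau,w(\cdot))$ is well-defined along the test trajectories. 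Proving $(b)\Leftrightarrow(c)$ in Theorem \ref{teo:minimax_solutions} then identifies the extended minimax solution with this $\mathrm{C}^1$-minimax object. On the $\mathrm{C}^1$ side the standard Clarke–Ledyaev–Subbotin scheme from \cite{Clarke_Ledyaev_1994,Subbotin_1993,Subbotin_1995,Plaksin_2020} applies: given a test pair $(\psi,\delta)$ realizing a local extremum as in (\ref{sub_viscosity_solution})/(\ref{super_viscosity_solution}), one constructs suitable characteristics inside $\mathbb G_*$, differentiates $\varphi-\psi$ along them using the ci-derivative relation (\ref{def:ci-differentiability}) and the identity (\ref{def:dg_s})–(\ref{def:derivative_g}), and deduces the required one-sided HJ inequality at $(\tau,z)$.

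Uniqueness of the viscosity solution would be obtained by running the implication backwards: any viscosity solution $\varphi$ of (\ref{Hamilton-Jacobi_equation}), (\ref{terminal_condition}) satisfies the $\mathrm{C}^1$-minimax inequalities (this is again the Subbotin-type correspondence, now read in reverse at points of $\mathbb G_*$), hence by Theorem \ref{teo:minimax_solutions} it agrees on $[0,\vartheta]\times\mathbb R^n\times\mathrm{Lip}$ with the unique minimax solution of \cite{Plaksin_2021b}; condition $(\varphi_2)$ together with the Lipschitz estimate propagates this agreement to the whole of $\mathbb G$, because every $(\tau,z,w(\cdot))\in\mathbb G$ can be approached, in the metric $\upsilon$ of (\ref{def:upsilon}), by triples with Lipschitz histories.

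The main obstacle is clearly the interplay between the definition space $\mathbb G_*$ of the HJ equation and the extension to $\mathbb G$: the term $\langle\partial^{ci}_{\tau,w}g(\tau,w(\cdot)),\nabla_z\varphi\rangle$ is discontinuous in the uniform norm and undefined outside $\mathbb G_*$, so the standard comparison machinery cannot be quoted directly. Handling this requires (i) identifying the correct topology captured by $\upsilon$ in (\ref{def:upsilon}) so that $\varphi$ is still continuous enough to carry the minimax/viscosity arguments, and (ii) using the $\mathrm{C}^1$-minimax auxiliary notion as a bridge so that characteristics can be constructed on histories where the $\partial^{ci}_{\tau,w}g$ term makes sense. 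Once these two ingredients are in place, the remaining steps reduce to routine adaptations of the Subbotin–Clarke–Ledyaev equivalence scheme already used in \cite{Plaksin_2020}.
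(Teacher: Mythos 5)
Your plan reproduces the paper's own architecture essentially verbatim: existence and uniqueness of a $\mathrm{Lip}$-minimax solution from \cite{Plaksin_2021b}, extension to $\mathrm{PLip}$ via the Lipschitz estimate (\ref{Phi_lip})--(\ref{def:upsilon}) (Theorem \ref{teo:Lip_minimax_solution}), the $\mathrm{C}^1$-minimax notion as the bridge on which $\partial^{ci}_{\tau,w}g$ is defined (Theorem \ref{teo:minimax_solutions}), and the Clarke--Ledyaev--Subbotin chain through directional derivatives and sub/superdifferentials for the equivalence with the viscosity notion (Theorem \ref{teo:equivalent_solutions}). The approach is correct and is the same as the paper's.
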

In order to prove the theorem, we consider the minimax approach to the definition of generalized solutions of problem (\ref{Hamilton-Jacobi_equation}), (\ref{terminal_condition}) (see \cite{Plaksin_2021b}).

\subsection{Minimax solution}

Taking the constant $c_H > 0$ from $(H_2)$, denote
\begin{equation}\label{def:F}
F^\eta(x,y) = \big\{l \in \mathbb R^n \colon \|l\| \leq c_H (1 + \|x\| + \|y\|) + \eta \big\},\ \ x,y \in \mathbb R^n,\ \ \eta \in [0,1].
\end{equation}
Let $(\tau,z,w(\cdot)) \in \mathbb G$. Denote by $X^\eta(\tau,z,w(\cdot))$ the set of the functions $x(\cdot) \in \Lambda(\tau,z,w(\cdot))$ such that the function $y(t) = x(t) - g(t,x(t-h))$, $t \in [\tau,\vartheta]$ is Lipschitz continuous and the neutral-type differential inclusion
\begin{equation}\label{def:inclusion}
\frac{\mathrm{d}}{\mathrm{d} t}\Big(x(t) - g(t,x(t-h))\Big) \in F^\eta(x(t),x(t-h))\text{ for a.e. } t \in [\tau,\vartheta],
\end{equation}
holds. Note that the set $X^\eta(t,z,w(\cdot))$ is not empty. At least, the function $x(\cdot) \in \Lambda(\tau,z,w(\cdot))$ satisfying $x(t) = g(t,x(t-h))$, $t \in (\tau,\vartheta]$, belongs to $X^\eta(t,z,w(\cdot))$.

The following inequalities for functionals $\varphi \colon \mathbb G \mapsto \mathbb R$ are key to define minimax solutions under consideration below:
\begin{subequations}
\begin{align}
\inf\limits_{x(\cdot) \in X^\eta(\tau,z,w(\cdot))} \big(\varphi(t,x(t),x_t(\cdot)) +
\omega(\tau,t,x(\cdot),s)\big) \leq \varphi(\tau,z,w(\cdot)), \label{def:upper_minmax_solution} \\[0.0cm]
\sup\limits_{x(\cdot) \in X^\eta(\tau,z,w(\cdot))} \big(\varphi(t,x(t),x_t(\cdot)) + \omega(\tau,t,x(\cdot),s)\big) \geq \varphi(\tau,z,w(\cdot)), \label{def:lower_minmax_solution}
\end{align}
\end{subequations}
where $(\tau,z,w(\cdot)) \in \mathbb G$, $\tau < \vartheta$, $t \in (\tau,\vartheta]$, $s \in \mathbb R^n$, $\eta \in [0,1]$, and
\begin{equation}\label{def:omega}
\begin{array}{rcl}
\omega(\tau,t,x(\cdot),s) &=& \displaystyle\int_\tau^t H(\xi,x(\xi),x(\xi-h),s) \mathrm{d} \xi \\[0.4cm]
 &-& \big\langle \Big(x(t) - g(t,x(t-h))\Big) - \Big(x(\tau) - g(\tau,x(\tau-h))\Big), s \big\rangle.
\end{array}
\end{equation}

\begin{definition}
A functional $\varphi \colon \mathbb G \mapsto \mathbb R$ is called a minimax solution of problem {\rm (\ref{Hamilton-Jacobi_equation}), (\ref{terminal_condition})}, if $\varphi$ satisfies conditions $(\varphi_1)$, $(\varphi_2)$, {\rm (\ref{terminal_condition})} and the inequalities (\ref{def:upper_minmax_solution}), (\ref{def:lower_minmax_solution}) for any
$(\tau,z,w(\cdot)) \in \mathbb G$, $\tau < \vartheta$, $t \in (\tau,\vartheta]$, $s \in \mathbb R^n$, and $\eta = 0$.
\end{definition}

This definition of minimax solution seems to be natural for considered problem (\ref{Hamilton-Jacobi_equation}), (\ref{terminal_condition}). Nevertheless, recently, the theory of minimax solution of problems similar to (\ref{Hamilton-Jacobi_equation}), (\ref{terminal_condition}) was developed on the space of Lipschitz continuous functions  (see \cite{Plaksin_2021b}). To apply these results, it is convenient to give the following auxiliary definition of a minimax solution.

\begin{definition}
A functional $\varphi \colon \mathbb G \mapsto \mathbb R$ is called a $\mathrm{Lip}$-minimax solution of problem {\rm (\ref{Hamilton-Jacobi_equation}), (\ref{terminal_condition})}, if $\varphi$ satisfies conditions $(\varphi_1)$, $(\varphi_2)$ and, taking $z = w(-0)$, satisfies condition {\rm (\ref{terminal_condition})} for any $w(\cdot) \in \mathrm{Lip}$ and inequalities (\ref{def:upper_minmax_solution}), (\ref{def:lower_minmax_solution}) for any $(\tau,w(\cdot)) \in [0,\vartheta) \times \mathrm{Lip}$, $t \in (\tau,\vartheta]$, $s \in \mathbb R^n$, and $\eta =0$.
\end{definition}
The first step to prove Theorem \ref{teo:viscosity_solution} is the following theorem which follows from Lemmas \ref{lem:ex_un_minimax_sol}, \ref{lem:varphi_extension} taking Remark \ref{rem:conditions} into account.
\begin{theorem}\label{teo:Lip_minimax_solution}
There exists a unique $\mathrm{Lip}$-minimax solution $\varphi$ of problem {\rm(\ref{Hamilton-Jacobi_equation}),~(\ref{terminal_condition})}.
\end{theorem}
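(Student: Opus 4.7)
The plan is to prove the two lemmas mentioned after the statement. First, Lemma \ref{lem:ex_un_minimax_sol} restricts attention to Lipschitz initial data and invokes the existing minimax theory of \cite{Plaksin_2021b}; second, Lemma \ref{lem:varphi_extension} lifts the restricted solution to all of $\mathbb G$ by density in the $\upsilon$-seminorm. Combined with Remark \ref{rem:conditions}, which upgrades uniform-norm continuity on $\mathrm{Lip}$ into condition $(\varphi_1)$, this yields Theorem \ref{teo:Lip_minimax_solution}.

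For Lemma \ref{lem:ex_un_minimax_sol} I would produce a unique functional $\hat\varphi \colon [0,\vartheta] \times \mathrm{Lip} \to \mathbb R$ which (i) is continuous in the uniform norm, (ii) is Lipschitz in the $\upsilon$-seminorm on bounded sets, (iii) satisfies $\hat\varphi(\vartheta, w(\cdot)) = \sigma(w(-0), w(\cdot))$, and (iv) fulfils the minimax inequalities (\ref{def:upper_minmax_solution}), (\ref{def:lower_minmax_solution}) with $z = w(-0)$. Points (i), (iii), (iv) together with a basic Lipschitz estimate in the uniform norm are available from \cite{Plaksin_2021b}. The additional content is the sharp estimate (ii) that singles out the boundary terms $\|w(-h)\|$ and $\|w(ih-\tau)\|$ present in $\upsilon$. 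These terms are forced by the neutral structure of the dynamics: $w(-h)$ enters $g$ directly on the initial interval of length $h$, while the shifted point value $w(ih-\tau)$ is where the influence of a perturbation of the initial tail re-emerges after $i$ delays. The estimate would be obtained by induction on the subintervals $[jh,(j+1)h]$ together with a Gronwall-type argument applied to $y(t) = x(t) - g(t, x(t-h))$, whose Lipschitz continuity on admissible trajectories is built into the definition of $X^\eta$ via (\ref{def:inclusion}).

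For Lemma \ref{lem:varphi_extension} I would extend $\hat\varphi$ to a functional $\varphi \colon \mathbb G \to \mathbb R$ satisfying $(\varphi_1), (\varphi_2)$. Given $(z, w(\cdot)) \in \mathbb R^n \times \mathrm{PLip}$, approximate by $w_k \in \mathrm{Lip}$ with $w_k(-0) = z$, $w_k(-h) = w(-h)$, and $w_k(ih - \tau) = w(ih - \tau)$, while $\|w - w_k\|_1 \to 0$; such $w_k$ is obtained by smoothing the finitely many jumps of $w$ on shrinking intervals that avoid the distinguished points $-h$, $ih - \tau$, and $0$. The $\upsilon$-Lipschitz estimate on $\hat\varphi$ makes the definition $\varphi(\tau, z, w(\cdot)) := \lim_k \hat\varphi(\tau, w_k(\cdot))$ sequence-independent and automatically extends $(\varphi_1), (\varphi_2)$ to $\mathbb G$. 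Because the terminal condition and the minimax inequalities are only imposed on Lipschitz initial data, they are inherited verbatim from $\hat\varphi$. Uniqueness on $\mathbb G$ then reduces to uniqueness on $[0,\vartheta] \times \mathrm{Lip}$ via the same approximation and the estimate $(\varphi_2)$.

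The main obstacle I foresee is the sharp $\upsilon$-Lipschitz estimate in Lemma \ref{lem:ex_un_minimax_sol}, specifically the appearance of the nonstandard term $\|w(ih-\tau)\|$. Unlike the time-delay setting of \cite{Plaksin_2020,Plaksin_2021}, here the neutral term $g(t, x(t-h))$ propagates initial perturbations forward along a sequence of delayed copies, and controlling them in an $L^1$-based seminorm alone is not enough: one must track the point values at the integer shifts of $h$. This is precisely why $\upsilon$ includes the discrete sample $\|w(ih - \tau)\|$, and proving the matching estimate for $\hat\varphi$ is the technical heart of the argument.
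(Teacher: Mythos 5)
Your proposal matches the paper's proof in both structure and substance: the same two-lemma decomposition (existence and uniqueness on $\mathrm{Lip}$ via Theorem 3 of the cited minimax paper, then extension to $\mathrm{PLip}$ by approximation controlled through the $\upsilon$-seminorm estimate), and the same identification of the sharp $\upsilon$-Lipschitz bound as the technical core, which the paper also proves by backward induction over the subintervals $[ih,\vartheta]$ with a Gronwall-type comparison realized through the Lyapunov functional $\nu^\alpha_{\gamma,\varepsilon}$ of Lemmas \ref{lem:lk} and \ref{lem:upper_lower_stability}. Your variant of the approximation (pinning the values at $-h$, $ih-\tau$, and $-0$ exactly, rather than the paper's one-sided mollification giving pointwise convergence everywhere) is an inessential difference.
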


Then, we introduce one more auxiliary definition of a minimax solution.
\begin{definition}\label{def:C1_minimax_solution}
A functional $\varphi \colon \mathbb G \mapsto \mathbb R$ is called a $\mathrm{C^1}$-minimax solution of problem {\rm (\ref{Hamilton-Jacobi_equation}), (\ref{terminal_condition})}, if $\varphi$ satisfies conditions $(\varphi_1)$, $(\varphi_2)$,
and, taking $z = w(-0)$, satisfies condition {\rm (\ref{terminal_condition})} for any $w(\cdot) \in \mathrm{C}^1$ and inequalities (\ref{def:upper_minmax_solution}), (\ref{def:lower_minmax_solution}) for any $i \in \overline{0,I-1}$, $(\tau,w(\cdot)) \in [i h, (i + 1) h) \times \mathrm{C}^1$, $t \in (\tau,(i+1)h]$, $s \in \mathbb R^n$, and $\eta \in (0,1]$.
\end{definition}
The following theorem establishes the equivalence of these three definitions.

\begin{theorem}\label{teo:minimax_solutions}
The following statements are equivalent:
\begin{description}
\item[(a)] The functional $\varphi \colon \mathbb G \mapsto \mathbb R$ is a minimax solution of problem {\rm (\ref{Hamilton-Jacobi_equation}), (\ref{terminal_condition})}.
\vspace{0.1cm}

\item[(b)] The functional $\varphi \colon \mathbb G \mapsto \mathbb R$ is a $\mathrm{Lip}$-minimax solution of problem {\rm (\ref{Hamilton-Jacobi_equation}), (\ref{terminal_condition})}.
\vspace{0.1cm}

\item[(c)] The functional $\varphi \colon \mathbb G \mapsto \mathbb R$ is a $\mathrm{C^1}$-minimax solution of problem {\rm (\ref{Hamilton-Jacobi_equation}), (\ref{terminal_condition})}.
\end{description}
\end{theorem}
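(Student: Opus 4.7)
I split the cycle into $(a) \Leftrightarrow (b)$ and $(b) \Leftrightarrow (c)$; in each pair one direction is a direct restriction and the other is the substantive step. The implication $(a) \Rightarrow (b)$ is immediate since (b) asks for the inequalities on a sub-class of the data used in (a) (namely $w(\cdot) \in \mathrm{Lip}$, $z = w(-0)$). For $(b) \Rightarrow (c)$, one restricts further to $w(\cdot) \in \mathrm{C}^1$ and $t \in (\tau, (i+1)h]$; the only point to check is the passage from $\eta = 0$ to $\eta > 0$. Since $F^0(x,y) \subset F^\eta(x,y)$ gives $X^0(\tau, z, w(\cdot)) \subset X^\eta(\tau, z, w(\cdot))$, the bound $\inf_{X^0}(\varphi + \omega) \leq \varphi$ immediately implies $\inf_{X^\eta}(\varphi + \omega) \leq \varphi$, and similarly for the supremum.

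For $(b) \Rightarrow (a)$, I take a $\mathrm{Lip}$-minimax solution $\varphi$ and any $(\tau, z, w(\cdot)) \in \mathbb{G}$, then approximate $w(\cdot) \in \mathrm{PLip}$ by $w^n(\cdot) \in \mathrm{Lip}$ arranged so that $w^n(-0) = z$, $w^n(-h) = w(-h)$, $w^n(ih - \tau) \to w(ih - \tau)$ for the relevant index $i$, and $\|w^n(\cdot) - w(\cdot)\|_1 \to 0$. The construction smooths jumps of $w$ on shrinking neighborhoods and, if needed, inserts a short affine segment near $-0$ linking $w(-0)$ to $z$. Condition $(\varphi_2)$, via the functional $\upsilon$ in (\ref{def:upsilon}), then yields $\varphi(\tau, z, w^n(\cdot)) \to \varphi(\tau, z, w(\cdot))$. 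For the inequalities (\ref{def:upper_minmax_solution})-(\ref{def:lower_minmax_solution}), I couple trajectories $x^n(\cdot) \in X^0(\tau, z, w^n(\cdot))$ with candidates $x(\cdot) \in X^0(\tau, z, w(\cdot))$ sharing the same neutral component $y(t) = x(t) - g(t, x(t-h))$ on $[\tau, \vartheta]$. Because the histories differ only in $L^1$ sense and $g$ is Lipschitz, the deviation between $x^n$ and $x$ on $[\tau, \vartheta]$ decays, and passing to the limit in $n$ transfers the inequalities from (b) to (a).

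The implication $(c) \Rightarrow (b)$ is the core analytic step and combines three mechanisms. To go from $w(\cdot) \in \mathrm{C}^1$ to $w(\cdot) \in \mathrm{Lip}$, I mollify $w$ into $w^n(\cdot) \in \mathrm{C}^1$ and use $(\varphi_2)$ to pass the inequalities to the limit. To chain across the breakpoints $\{ih\}$ and thus extend the time interval from $(\tau, (i+1)h]$ to all of $(\tau, \vartheta]$, I apply the inequality on consecutive pieces; at each breakpoint the new initial history is only in $\mathrm{PLip}$, so this chaining must be combined with the approximation mechanism just described, together with the $(b) \Rightarrow (a)$ extension. To pass $\eta \downarrow 0$, I apply (c) with $\eta_k = 1/k$ and extract near-optimal $x^k(\cdot) \in X^{1/k}$; the neutral component $y^k(t) = x^k(t) - g(t, x^k(t-h))$ is uniformly Lipschitz thanks to the boundedness in $F^{1/k}$, so Arzel\`a-Ascoli yields a uniform limit $y^*$, and the relation $x(t) = y(t) + g(t, x(t-h))$ reconstructs $x^*(\cdot) \in X^0$ inductively on intervals of length $h$, with dominated convergence and continuity of $H$ closing the integral term in $\omega$.

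The main obstacle is controlling the convergence of $\varphi$ along approximating histories $w^n(\cdot)$ that do not converge to $w(\cdot)$ in the uniform norm: this is precisely what the sampling terms $\|w(-h)\|$ and $\|w(ih - \tau)\|$ inside $\upsilon$ are designed to absorb, and it is the interaction of these terms with the compactness argument for $\eta \downarrow 0$ and with the iterated chaining across the breakpoints $\{ih\}$ that requires the most care.
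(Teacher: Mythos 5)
Your proposal uses the same toolkit as the paper and is essentially correct, but it organizes the cycle differently: the paper proves $(a)\Rightarrow(b)\Rightarrow(c)$ by restriction (exactly as you do) and then closes the loop with a single substantive implication $(c)\Rightarrow(a)$ (Lemma \ref{lem:C1_and_minimax_solutions}), which handles the passage from $\mathrm{C}^1$ to $\mathrm{PLip}$ data, the limit $\eta\downarrow 0$, and the reduction to one subinterval $[ih,(i+1)h)$ all at once; you instead split the work into $(b)\Rightarrow(a)$ and $(c)\Rightarrow(b)$. As you yourself observe, the chaining across the breakpoints $\{ih\}$ in your $(c)\Rightarrow(b)$ produces $\mathrm{PLip}$ histories and therefore forces you to invoke the $(b)\Rightarrow(a)$ machinery inside that step, so the two substantive implications collapse back into one argument equivalent to the paper's. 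The one step I would push back on is the coupling in your $(b)\Rightarrow(a)$: if $x^n(\cdot)\in X^0(\tau,z,w^n(\cdot))$ and you define $x(\cdot)$ by prescribing the \emph{same} neutral component $y$, then $\mathrm{d}y/\mathrm{d}t$ lies in $F^0(x^n(t),x^n(t-h))$ but not necessarily in $F^0(x(t),x(t-h))$, since the radius of $F^0$ depends on the trajectory itself and $x^n(t-h)\neq x(t-h)$ at the (finitely many) discontinuity points of $w$; so for fixed $n$ the coupled $x(\cdot)$ need not belong to $X^0(\tau,z,w(\cdot))$. This is repairable — and is exactly how the paper's Lemma \ref{lem:Gamma_motion} proceeds — by instead extracting an Arzel\`a--Ascoli limit of the $y^n$ and using Filippov's lemma to show the limit trajectory satisfies the inclusion with $\eta=\varepsilon$ for every $\varepsilon>0$, hence with $\eta=0$; you already invoke this compactness argument for the $\eta\downarrow 0$ passage, so you should route the $(b)\Rightarrow(a)$ coupling through it as well rather than through the shared-neutral-component construction.
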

The implications $(a) \Rightarrow (b) \Rightarrow (c)$ are valid due to inclusions (\ref{space_inclusions}) and inclusion $X^0(\tau,z,w(\cdot)) \subset X^\eta(\tau,z,w(\cdot))$ for any $\eta \in (0,1]$. The implication $(c) \Rightarrow (a)$ is proved in Lemma \ref{lem:C1_and_minimax_solutions}.

Finally, in order to prove Theorem \ref{teo:viscosity_solution}, we follow scheme from \cite{Clarke_Ledyaev_1994,Subbotin_1993,Subbotin_1995} (see also \cite{Plaksin_2020}). For that, we define the corresponding notions of lower and upper right directional derivatives as well as the notions of subdifferential and superdifferential of a functional $\varphi \colon \mathbb G \mapsto \mathbb R$.

By analogy with \cite{Plaksin_2021} (see also \cite{Lukoyanov_2010a}), lower and upper right directional derivatives of a functional $\varphi \colon \mathbb G \mapsto \mathbb R$ along $(l_0, l) \in \mathbb [0,+\infty) \times \mathbb R^n$ at $(\tau,z,w(\cdot)) \in \mathbb G$, $\tau < \vartheta$ are defined by
\begin{subequations}\label{directional_derivatives}
\begin{align}
\partial^-_{(l_0,l)} \varphi(\tau,z,w(\cdot)) &
= \liminf\limits_{\delta \downarrow 0} \displaystyle\frac{\varphi(\tau + l_0 \delta, z + l \delta,\kappa_{\tau + l_0 \delta}(\cdot)) - \varphi(\tau,z,w(\cdot))}{\delta},\label{lower_directional_derivatives}\\[0.0cm]
\partial^+_{(l_0,l)} \varphi(\tau,z,w(\cdot)) &
= \limsup\limits_{\delta \downarrow 0} \displaystyle\frac{\varphi(\tau + l_0 \delta, z + l \delta,\kappa_{\tau + l_0 \delta}(\cdot)) - \varphi(\tau,z,w(\cdot))}{\delta},\label{ipper_directional_derivatives}
\end{align}
\end{subequations}
where $\kappa(\cdot) \in \Lambda_0(\tau,z,w(\cdot))$.

The subdifferential of a functional $\varphi \colon \mathbb G \mapsto \mathbb R$ at a point $(\tau,z,w(\cdot)) \in \mathbb G$, $\tau < \vartheta$ is a set, denoted by $D^-(\tau,z,w(\cdot))$, of pairs $(p_0,p) \in \mathbb R \times \mathbb R^n$ such that
\begin{equation}\label{subdifferential}
\lim\limits_{\delta \to 0} \inf\limits_{(t,x) \in O^+_\delta(\tau,z)}
\!\!\frac{\varphi(t,x,\kappa_t(\cdot)) - \varphi(\tau,z,w(\cdot)) - (t - \tau) p_0 - \langle x - z, p \rangle}{|t - \tau| + \|x - z\|} \geq 0.
\end{equation}
The superdifferential of a functional $\varphi \colon \mathbb G \mapsto \mathbb R$ at a point $(\tau,z,w(\cdot)) \in \mathbb G$, $\tau < \vartheta$ is a set, denoted by $D^+(\tau,z,w(\cdot))$, of pairs $(q_0,q) \in \mathbb R \times \mathbb R^n$ such that
\begin{equation}\label{superdifferential}
\lim\limits_{\delta \to 0} \sup\limits_{(t,x) \in O^+_\delta(\tau,z)}
\!\!\frac{\varphi(t,x,\kappa_t(\cdot)) - \varphi(\tau,z,w(\cdot)) - (t - \tau) q_0 - \langle x - z, q \rangle}{|t - \tau| + \|x - z\|} \leq 0.
\end{equation}
The theorem below together with Theorem \ref{teo:Lip_minimax_solution} complete the proof of Theorem~\ref{teo:viscosity_solution}.

\begin{theorem}\label{teo:equivalent_solutions}
The following statements are equivalent:
\begin{description}
\item[(a)] The functional $\varphi \colon \mathbb G \mapsto \mathbb R$ is a minimax solution of problem {\rm (\ref{Hamilton-Jacobi_equation}), (\ref{terminal_condition})}.
\vspace{0.1cm}

\item[(b)] The functional $\varphi \colon \mathbb G \mapsto \mathbb R$ is a $\mathrm{C^1}$-minimax solution of problem {\rm (\ref{Hamilton-Jacobi_equation}), (\ref{terminal_condition})}.
\vspace{0.1cm}

\item[(c)] The functional $\varphi \colon \mathbb G \mapsto \mathbb R$ satisfies conditions $(\varphi_1)$, $(\varphi_2)$, {\rm (\ref{terminal_condition})} and, for every
$(\tau,z,w(\cdot)) \in \mathbb G_*$ and $s \in \mathbb R^n$, the following inequalities hold:
\begin{subequations}
\begin{gather}
\begin{array}{l}
\inf\limits_{l \in F^0(z,w(-h)) + \partial^{ci}_{\tau,w} g(\tau,w(\cdot))}
\Big(\partial^-_{1,l} \varphi(\tau,z,w(\cdot)) + \langle \partial^{ci}_{\tau,w} g(\tau,w(\cdot)) ,s \rangle\\[0.0cm]
\hspace{5cm} + H(\tau,z,w(-h),s) - \langle l,s \rangle\Big) \leq 0,\label{lower_directional_derivative_inequality} \\[0.1cm]
\end{array}\\[0.1cm]
\begin{array}{l}
\sup\limits_{l \in F^0(z,w(-h)) + \partial^{ci}_{\tau,w} g(\tau,w(\cdot))} \Big(\partial^+_{1,l} \varphi(\tau,z,w(\cdot)) + \langle \partial^{ci}_{\tau,w} g(\tau,w(\cdot)) ,s \rangle\\[0.0cm]
\hspace{5cm} + H(\tau,z,w(-h),s) - \langle l,s \rangle\Big) \geq 0.\label{upper_directional_derivative_inequality}
\end{array}
\end{gather}
\end{subequations}
\vspace{0.1cm}

\item[(d)] The functional $\varphi \colon \mathbb G \mapsto \mathbb R$ satisfies conditions $(\varphi_1)$, $(\varphi_2)$, {\rm (\ref{terminal_condition})} and, for every $(\tau,z,w(\cdot)) \in \mathbb G_*$, $(p_0,p) \in D^-\varphi(\tau,z,w(\cdot))$, and $(q_0,q) \in D^+\varphi(\tau,z,w(\cdot))$ the following inequalities hold:
\begin{subequations}
\begin{gather}
p_0 + \langle \partial^{ci}_{\tau,w} g(\tau,w(\cdot)), p \rangle + H(\tau,z,w(-h),p) \leq 0,\label{subdifferential_viscosity_inequality}\\[0.1cm]
q_0 + \langle \partial^{ci}_{\tau,w} g(\tau,w(\cdot)), p \rangle + H(\tau,z,w(-h),q) \geq 0.\label{superdifferential_viscosity_inequality}
\end{gather}
\end{subequations}

\item[(e)] The functional $\varphi \colon \mathbb G \mapsto \mathbb R$ is a viscosity solution of problem {\rm (\ref{Hamilton-Jacobi_equation}), (\ref{terminal_condition})}.
\end{description}
\end{theorem}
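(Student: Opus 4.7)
The plan is to exploit Theorem~\ref{teo:minimax_solutions} to collapse $(a) \Leftrightarrow (b)$ immediately and then close the cycle $(b) \Rightarrow (c) \Rightarrow (d) \Rightarrow (e) \Rightarrow (b)$, following the nonsmooth-analysis template of \cite{Clarke_Ledyaev_1994,Subbotin_1993,Subbotin_1995} and its path-dependent adaptation in \cite{Plaksin_2020}. The recurring technical inputs are the regularity $(\varphi_1)$, $(\varphi_2)$, the Lipschitz-growth bound defining $F^\eta$, the continuous differentiability of $g$, and the fact that on $\mathbb G_*$ the neutral shift $\partial^{ci}_{\tau,w} g(\tau,w(\cdot))$ is a well-defined vector in $\mathbb R^n$.

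For $(b) \Rightarrow (c)$, fix $(\tau,z,w(\cdot)) \in \mathbb G_*$ and $s \in \mathbb R^n$, then divide the $\mathrm{C}^1$-minimax inequalities (\ref{def:upper_minmax_solution}), (\ref{def:lower_minmax_solution}) by $t-\tau$ and let $\eta \downarrow 0$, $t \downarrow \tau$. By $(H_1)$ the integral in $\omega$ converges to $H(\tau,z,w(-h),s)$, while by $(g)$ the difference of the $g$-terms behaves to first order like $\langle \partial^{ci}_{\tau,w} g(\tau,w(\cdot)), s\rangle (t-\tau)$. What remains of $\varphi$ becomes $\partial^{\mp}_{1,l}\varphi$ with $l$ the right derivative of $x(\cdot)$ at $\tau$, and (\ref{def:inclusion}) forces $l \in F^0(z,w(-h)) + \partial^{ci}_{\tau,w} g(\tau,w(\cdot))$; this is precisely (\ref{lower_directional_derivative_inequality})--(\ref{upper_directional_derivative_inequality}). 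The converse $(c) \Rightarrow (b)$ integrates the infinitesimal estimate along a trajectory of (\ref{def:inclusion}): a Filippov-type measurable selection, driven at each nearby point of $\mathbb G_*$ by the pointwise inequality, produces $x(\cdot) \in X^\eta(\tau,z,w(\cdot))$ whose integrated behaviour reproduces the finite inequality up to $\eta$, and the slack $\eta \in (0,1]$ built into Definition~\ref{def:C1_minimax_solution} is precisely what absorbs the selection error.

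The equivalence $(c) \Leftrightarrow (d)$ is the Legendre--Fenchel duality between directional derivatives and sub/superdifferentials from \cite{Subbotin_1995}: one verifies
\[
\partial^-_{1,l}\varphi(\tau,z,w(\cdot)) = \sup_{(p_0,p) \in D^-\varphi(\tau,z,w(\cdot))} \big(p_0 + \langle l, p\rangle\big)
\]
and the symmetric identity for $\partial^+_{1,l}\varphi$ and $D^+\varphi$. Substituting these into (\ref{lower_directional_derivative_inequality})--(\ref{upper_directional_derivative_inequality}) and exchanging $\inf_l$ with $\sup_{(p_0,p)}$ via a standard minimax lemma applied to the expression that is affine in $s$, one collapses the inf/sup form into the pointwise inequalities (\ref{subdifferential_viscosity_inequality}), (\ref{superdifferential_viscosity_inequality}). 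The step $(d) \Leftrightarrow (e)$ is the ci-adapted test-function characterization of sub/superdifferentials (cf. \cite{Plaksin_2020,Plaksin_2021}): if $\psi \in \mathrm{C}^1(\mathbb R \times \mathbb R^n,\mathbb R)$ touches $\varphi$ from below at $(\tau,z)$ along every extension $\kappa(\cdot) \in \Lambda_0(\tau,z,w(\cdot))$, then $(\partial \psi/\partial \tau(\tau,z),\nabla_z \psi(\tau,z)) \in D^-\varphi(\tau,z,w(\cdot))$ straight from (\ref{subdifferential}); conversely, from $(p_0,p) \in D^-\varphi$ one constructs an explicit test function $\psi(t,x) = p_0(t-\tau) + \langle p,x-z\rangle - \rho(|t-\tau|+\|x-z\|)$ with a suitable concave modulus $\rho$, and the super-case is analogous. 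Finally $(e) \Rightarrow (b)$ is a comparison argument: the viscosity sub/super inequalities applied along trajectories of (\ref{def:inclusion}) together with appropriate test functions propagate the minimax estimates in $t$ with an $\eta > 0$ slack.

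The main obstacle is keeping the discontinuous, only-partially-defined term $\partial^{ci}_{\tau,w} g(\tau,w(\cdot))$ under control throughout the cycle. Two difficulties recur: first, in $(c) \Rightarrow (b)$ the selected trajectory must keep the triple $(t,x(t),x_t(\cdot))$ inside $\mathbb G_*$ on the relevant piece, which is exactly why the product structure $\mathbb G_* = \bigcup_{i=0}^{I-1}(ih,(i+1)h)\times \mathbb R^n \times \mathrm{PLip}_*$ and the $\mathrm{C}^1$-near-$(-h)$ requirement of $\mathrm{PLip}_*$ are crucial, and why $t$ is restricted to $(\tau,(i+1)h]$ in Definition~\ref{def:C1_minimax_solution}; second, the translation from the velocity set $F^0$ for $y(\cdot) = x(\cdot) - g(\cdot,x(\cdot-h))$ to the shifted set $F^0 + \partial^{ci}_{\tau,w} g$ for $\dot x(\cdot)$ must be tracked consistently through every limit passage and through the minimax lemma in $(c) \Leftrightarrow (d)$. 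These are the only genuine departures from the time-delay scheme of \cite{Plaksin_2020}; once they are handled, the rest is bookkeeping.
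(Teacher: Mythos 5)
There is a genuine gap at the step $(c)\Leftrightarrow(d)$. You assert the identity
\begin{equation*}
\partial^-_{1,l}\varphi(\tau,z,w(\cdot)) \;=\; \sup_{(p_0,p)\in D^-\varphi(\tau,z,w(\cdot))}\bigl(p_0+\langle l,p\rangle\bigr)
\end{equation*}
and treat the passage between the Dini inequalities and the sub/superdifferential inequalities as a Legendre--Fenchel duality plus a minimax swap. That identity is false for the nonconvex, merely Lipschitz-type functionals considered here: from definitions (\ref{subdifferential}) and (\ref{lower_directional_derivatives}) one only gets ``$\geq$'', and $D^-\varphi(\tau,z,w(\cdot))$ can be empty at a point where $\partial^-_{1,l}\varphi$ is finite (already for $\varphi$ behaving like $-\|x-z\|$ near the point the right-hand side is $-\infty$). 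Since the implication you need, $(d)\Rightarrow(c)$, bounds $\partial^-_{1,l}\varphi$ \emph{from above} using information about $D^-\varphi$, it is exactly the direction that dies when the subdifferential is empty; the minimax swap also has no justification because $D^-\varphi$ need not be convex, compact, or nonempty. This step is the analytic core of the theorem and cannot be reduced to pointwise duality: the paper's proof (Lemma \ref{lem:D_and_dini_solutions} via Lemma \ref{lem:mvi}) uses the multidirectional mean value inequality of Clarke--Ledyaev/Subbotin, i.e.\ a variational penalization argument (the functions $\gamma_k$ minimized over $\Omega_\delta\times\Omega^*_\delta$) that produces a \emph{nearby} point $(t_*,x_*,\kappa_{t_*}(\cdot))$ at which $D^-\varphi$ is provably nonempty and contains a pair $(p_0,p)$ with $p_0+\langle l,p\rangle>0$ for all $l$ in the compact convex set $L$; the contradiction with (\ref{subdifferential_viscosity_inequality}) is then reached at that perturbed point, using $(H_1)$, $(H_3)$ and the continuity of $F^0$ and of $\partial^{ci}_{\tau,w}g$ to transport the Hamiltonian data from $(\tau,z)$ to $(t_*,x_*)$. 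Nothing in your proposal supplies this perturbation mechanism.

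Two smaller structural points. First, the paper proves a single cycle $(a)\Rightarrow(e)\Rightarrow(d)\Rightarrow(c)\Rightarrow(b)\Rightarrow(a)$, so each lemma is needed in only one direction; your plan proves several equivalences both ways, which duplicates work and, in the case of $(b)\Rightarrow(c)$, runs into the problem that the $\mathrm{C}^1$-minimax inequalities are only postulated at points with $w(\cdot)\in\mathrm{C}^1$ and $z=w(-0)$, whereas $(c)$ must hold at every point of $\mathbb G_*$ (arbitrary $z$ and $w(\cdot)\in\mathrm{PLip}_*$); you must first upgrade to the full minimax property on $\mathbb G$ via Theorem \ref{teo:minimax_solutions} before taking the infinitesimal limit, and even then one needs $(\varphi_2)$ to replace the minimizing trajectories by the straight-line/constant-memory extensions $\kappa(\cdot)$ appearing in (\ref{directional_derivatives}). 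Second, your descriptions of $(c)\Rightarrow(b)$ (integration of the Dini inequality along a selected trajectory with the $\eta$-slack absorbing the error) and of $(e)\Rightarrow(d)$ (building a continuous test function from a subgradient, as in Lemma \ref{lem:viscosity_and_D_solutions}) do match the paper's Lemmas \ref{lem:dini_and_C1_solutions} and \ref{lem:viscosity_and_D_solutions} in spirit, and your remarks on confining trajectories to $\mathbb G_*$ and on the shift by $\partial^{ci}_{\tau,w}g$ are the right concerns. But without the mean value inequality the chain does not close.
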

The implications $(a) \Rightarrow (e) \Rightarrow (d) \Rightarrow (c) \Rightarrow (b)  \Rightarrow (a)$ follow from Lemmas \ref{minimax_and_viscosity_solutions}, \ref{lem:viscosity_and_D_solutions}, \ref{lem:D_and_dini_solutions}, \ref{lem:dini_and_C1_solutions}, and Theorem \ref{teo:minimax_solutions}, respectively.

\begin{remark}
Besides the proof of Theorem \ref{teo:viscosity_solution}, Theorem \ref{teo:equivalent_solutions} establishs other equivalent definitions of generalized solution of (\ref{Hamilton-Jacobi_equation}), (\ref{terminal_condition}) which have analogues in the classical theory \cite{Subbotin_1995}. Namely, $(c)$ is similar to the infinitesimal variant of the definitions of minimax solutions (see \cite{Subbotin_1980}) or Dini solutions according to another terminology (see \cite{Clarke_Ledyaev_1994}). $(d)$ is an analog of the equivalent notion of the viscosity solutions considered in \cite{Crandall_Evans_Lions_1984}.
\end{remark}

\subsection{Consistency}

\begin{remark}
Note that, directly from definitions of ci-differentiability (see (\ref{def:ci-differentiability})) and sub- and superdifferentials (see (\ref{subdifferential}) and (\ref{superdifferential})), if a functional $\varphi \colon \mathbb G \mapsto \mathbb R$ satisfies conditions $(\varphi_1)$, $(\varphi_2)$ and is ci-differentiable $(\tau,z,w(\cdot)) \in \mathbb G_*$, then
\begin{equation*}
\begin{array}{rcl}
D^-\varphi(\tau,z,w(\cdot)) &=& \big\{(p_0,p) \colon p_0 \leq \partial^{ci}_{\tau,w}\varphi(\tau,z,w(\cdot)),\, p = \nabla_z\varphi(\tau,z,w(\cdot))\big\},\\[0.2cm]
D^+\varphi(\tau,z,w(\cdot)) &=& \big\{(q_0,q) \colon q_0 \geq \partial^{ci}_{\tau,w}\varphi(\tau,z,w(\cdot)),\, q = \nabla_z\varphi(\tau,z,w(\cdot))\big\}.
\end{array}
\end{equation*}
\end{remark}
Hence, the following statement about consistency of the viscosity solution definition (see Definition \ref{def:viscosity_solution}) and problem (\ref{Hamilton-Jacobi_equation}), (\ref{terminal_condition}) can be obtained from Theorem~\ref{teo:equivalent_solutions}.
\begin{theorem}\label{teo:classical_and_viscosity_solutions}
a) Let a functional $\varphi \colon \mathbb G \mapsto \mathbb R$ be the viscosity solution of problem {\rm (\ref{Hamilton-Jacobi_equation}), (\ref{terminal_condition})}. If $\varphi$ is ci-differentiable at a point $(\tau,z,w(\cdot)) \in \mathbb G_*$, then it satisfies HJ equation {\rm (\ref{Hamilton-Jacobi_equation})} at this point. b) Let a functional $\varphi \colon \mathbb G \mapsto \mathbb R$ be ci-differentiable at every $(\tau,z,w(\cdot)) \in \mathbb G_*$, satisfy HJ equation {\rm (\ref{Hamilton-Jacobi_equation})} on $\mathbb G_*$ and satisfy conditions $(\varphi_1)$, $(\varphi_1)$, and {\rm (\ref{terminal_condition})}. Then $\varphi$ is the viscosity solution of problem {\rm (\ref{Hamilton-Jacobi_equation}),~(\ref{terminal_condition})}.
\end{theorem}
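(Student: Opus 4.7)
The plan is to derive both parts of the theorem as immediate consequences of the equivalence $(d) \Leftrightarrow (e)$ in Theorem \ref{teo:equivalent_solutions}, combined with the characterization of the sub- and superdifferentials at points of ci-differentiability given in the remark immediately preceding the theorem. In other words, the strategy is to recast the viscosity inequalities (\ref{subdifferential_viscosity_inequality}), (\ref{superdifferential_viscosity_inequality}) at ci-differentiable points as one-sided versions of the HJ equation (\ref{Hamilton-Jacobi_equation}), which then collapse to an equality.

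For part (a), I assume $\varphi$ is a viscosity solution and is ci-differentiable at $(\tau,z,w(\cdot)) \in \mathbb G_*$. By Theorem \ref{teo:equivalent_solutions} $(e) \Rightarrow (d)$, the inequalities (\ref{subdifferential_viscosity_inequality}) and (\ref{superdifferential_viscosity_inequality}) hold for every element of $D^-\varphi(\tau,z,w(\cdot))$ and $D^+\varphi(\tau,z,w(\cdot))$, respectively. By the preceding remark, the pair $(p_0,p) = (\partial^{ci}_{\tau,w}\varphi(\tau,z,w(\cdot)), \nabla_z\varphi(\tau,z,w(\cdot)))$ lies in $D^-\varphi(\tau,z,w(\cdot))$, and the same pair (viewed as $(q_0,q)$) lies in $D^+\varphi(\tau,z,w(\cdot))$. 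Substituting in (\ref{subdifferential_viscosity_inequality}) yields the inequality ``$\leq 0$'' in (\ref{Hamilton-Jacobi_equation}), while substituting in (\ref{superdifferential_viscosity_inequality}) yields ``$\geq 0$''; these combine to (\ref{Hamilton-Jacobi_equation}) at $(\tau,z,w(\cdot))$.

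For part (b), I assume $\varphi$ is ci-differentiable on all of $\mathbb G_*$ and satisfies (\ref{Hamilton-Jacobi_equation}) there, together with $(\varphi_1)$, $(\varphi_2)$, and (\ref{terminal_condition}). Fix $(\tau,z,w(\cdot)) \in \mathbb G_*$ and any $(p_0,p) \in D^-\varphi(\tau,z,w(\cdot))$. By the remark, $p = \nabla_z\varphi(\tau,z,w(\cdot))$ and $p_0 \leq \partial^{ci}_{\tau,w}\varphi(\tau,z,w(\cdot))$. Combining this with the equality (\ref{Hamilton-Jacobi_equation}) produces (\ref{subdifferential_viscosity_inequality}). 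The analogous argument for any $(q_0,q) \in D^+\varphi(\tau,z,w(\cdot))$, where $q_0 \geq \partial^{ci}_{\tau,w}\varphi$ and $q = \nabla_z\varphi$, yields (\ref{superdifferential_viscosity_inequality}). Thus condition $(d)$ of Theorem \ref{teo:equivalent_solutions} holds, and by $(d) \Rightarrow (e)$ the functional $\varphi$ is the viscosity solution.

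There is essentially no obstacle to overcome here, since both directions are direct substitutions; the real work has already been done in Theorem \ref{teo:equivalent_solutions} and in the characterization of $D^\pm\varphi$ at ci-differentiable points. If anything, the only point requiring a sentence of care is checking that the pair $(\partial^{ci}_{\tau,w}\varphi(\tau,z,w(\cdot)), \nabla_z\varphi(\tau,z,w(\cdot)))$ really does belong to both $D^-\varphi$ and $D^+\varphi$, but this is precisely what the remark asserts from the definition (\ref{def:ci-differentiability}) together with conditions $(\varphi_1)$, $(\varphi_2)$.
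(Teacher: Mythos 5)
Your proposal is correct and follows exactly the route the paper intends: the remark preceding the theorem identifies $D^\pm\varphi$ at ci-differentiable points, and the equivalence $(d)\Leftrightarrow(e)$ of Theorem \ref{teo:equivalent_solutions} then gives both directions by direct substitution. This matches the paper's own (one-line) derivation of Theorem \ref{teo:classical_and_viscosity_solutions}.
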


\subsection{Application for optimal control problems}

In spite of the fact that prior works (see, e.g., \cite{Gomoyunov_Plaksin_2019,Lukoyanov_Plaksin_2020b}) consider dynamical optimization problems for neutral-time systems on the space of Lipschitz continuous functions, we can also apply the results of this paper to get characterizations of value functionals in such problems.

For example, consider the following optimal control problem: for each $(\tau,w(\cdot)) \in [0,\vartheta] \times \mathrm{Lip}$, it is required to minimize the Bolza cost functional
\begin{equation}\label{J}
J(\tau,w(\cdot),u(\cdot)) = \sigma(x(\vartheta),x_\vartheta(\cdot)) + \int_\tau^\vartheta f^0(t,x(t),x(t-h),u(t)) \mathrm{d} t,
\end{equation}
over all measurable functions $u(\cdot)\colon [\tau,\vartheta] \mapsto \mathbb U$, where $x(\cdot) \in \Lambda(\tau,w(-0),w(\cdot))$ is a Lipschitz continuous function satisfying the neutral-type equation
\begin{equation}\label{def:neutral_type_equation}
\frac{\mathrm{d}}{\mathrm{d} t}\Big(x(t) - g(t,x(t-h))\Big) = f(t,x(t),x(t-h),u(t))\text{ for a.e. } t \in [\tau,\vartheta].
\end{equation}
Here $\mathbb U \subset \mathbb R^m$ is the compact set; the function $g$ and the functional $\sigma$ satisfy conditions $(g)$ and $(\sigma)$; the functions $f \colon [0,\vartheta] \times \mathbb R^n \times \mathbb R^n \times \mathbb U \mapsto \mathbb R^n$ and $f^0 \colon [0,\vartheta] \times \mathbb R^n \times \mathbb R^n \times \mathbb U \mapsto \mathbb R$ satisfy the following conditions:
\begin{itemize}
\item[$(f_1)$] The functions $f$ and $f^0$ are continuous.
	
\item[$(f_2)$] There exists a constant $c_f > 0$ such that
\begin{equation*}
\big\|f(t,z,r,u)\big\| + \big|f^0(t,z,r,u)\big| \leq c_f\big(1 + \|z\| + \|r\|\big)
\end{equation*}
for any $t \in [0,\vartheta]$, $z,r \in \mathbb R^n$, and $u \in \mathbb U$.

\item[$(f_3)$] For every $\alpha > 0$, there exists $\lambda_f = \lambda_f(\alpha) > 0$ such that
\begin{equation*}
\begin{array}{c}
\big\|f(t,z,r,u) - f(t,z',r',u)\big\| + \big|f^0(t,z,r,u) - f^0(t,z',r',u)\big|\\[0.2cm]
\leq \lambda_f\big(\|z - z'\| + \|r - r'\|\big)
\end{array}
\end{equation*}
for any $t \in [0,\vartheta]$, $z,r,z',r' \in \mathbb R^n$: $\max\{\|z\|,\|r\|,\|z'\|,\|r'\|\} \leq \alpha$, and $u \in \mathbb U$.
\end{itemize}
Note that, under these conditions, the function
\begin{equation}\label{def:hamiltonian}
H(t,z,r,s) = \min\limits_{u \in \mathbb U} \big(\langle f(t,z,r,u), s \rangle + f^0(t,z,r,u)\big),\ t \in [0,\vartheta],\ z,r,s \in \mathbb R^n,
\end{equation}
satisfies conditions $(H_1)$--$(H_3)$. The value functional of this problem is
\begin{equation}\label{def:value_functional}
\hat{\varphi}(\tau,w(\cdot)) = \inf\limits_{u(\cdot)} J(\tau,w(\cdot),u(\cdot)).
\end{equation}

Applying Lemma \ref{lem:value_functional} to show that $\hat{\varphi}$ is the functional from Lemma \ref{lem:ex_un_minimax_sol} and next, using Lemma \ref{lem:varphi_extension} and Theorem \ref{teo:equivalent_solutions}, taking Remark \ref{rem:conditions} into account, we can obtain the following result.
\begin{theorem}\label{teo:application}
There exists a unique functional $\varphi \colon \mathbb G \mapsto \mathbb R$ satisfying conditions $(\varphi_1)$, $(\varphi_2)$ and the equality
\begin{equation*}
\varphi(\tau,w(-0),w(\cdot)) = \hat{\varphi}(\tau,w(\cdot)),\quad
(\tau,w(\cdot)) \in [0,\vartheta] \times \mathrm{Lip}.
\end{equation*}
Such a functional $\varphi$ is a unique viscosity solution of problem {\rm (\ref{Hamilton-Jacobi_equation}), (\ref{terminal_condition})}.
\end{theorem}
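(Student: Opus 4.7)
The plan is to combine the previously stated results by reducing the optimal control problem to the existence/uniqueness theory developed in the paper. First I would verify that the Hamiltonian $H$ defined in (\ref{def:hamiltonian}) satisfies the structural conditions $(H_1)$--$(H_3)$. Continuity of $H$ follows from $(f_1)$, compactness of $\mathbb U$, and a standard argument using uniform continuity on compacta. The sublinear-in-$s$ estimate $(H_2)$ with constant $c_H$ proportional to the constant $c_f$ from $(f_2)$ follows because the minimum of an affine-in-$s$ family is Lipschitz in $s$ with Lipschitz constant bounded by the common $s$-coefficient norm. Finally $(H_3)$ follows from $(f_3)$ since differences of minima are bounded by the sup of pointwise differences.

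Next I would apply Lemma \ref{lem:value_functional} to show that $\hat{\varphi}$ defined by (\ref{def:value_functional}) fits the framework of Lemma \ref{lem:ex_un_minimax_sol}. Concretely, this step establishes that $\hat{\varphi}$ satisfies conditions $(\varphi_1)$ and $(\varphi_2)$ (restricted to the space $[0,\vartheta]\times\mathrm{Lip}$), that it fulfills the terminal equality via $(z,w(\cdot)) = (w(-0),w(\cdot))$, and that it obeys the dynamic programming inequalities (\ref{def:upper_minmax_solution}), (\ref{def:lower_minmax_solution}) with $\eta = 0$ --- i.e.\ the Bellman principle for the Bolza functional (\ref{J}) along trajectories of (\ref{def:neutral_type_equation}). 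Thus $\hat{\varphi}$ is a $\mathrm{Lip}$-minimax solution of (\ref{Hamilton-Jacobi_equation}), (\ref{terminal_condition}) on $[0,\vartheta]\times\mathrm{Lip}$, and by Theorem \ref{teo:Lip_minimax_solution} it is unique in this class.

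Then I would invoke Lemma \ref{lem:varphi_extension} to extend $\hat{\varphi}$ to a functional $\varphi \colon \mathbb G \mapsto \mathbb R$ that preserves $(\varphi_1)$, $(\varphi_2)$, the terminal condition (\ref{terminal_condition}), and satisfies the coincidence $\varphi(\tau,w(-0),w(\cdot)) = \hat{\varphi}(\tau,w(\cdot))$ for all $(\tau,w(\cdot))\in[0,\vartheta]\times\mathrm{Lip}$; uniqueness of such an extension comes from the same lemma combined with Remark \ref{rem:conditions} (which shows that any functional satisfying $(\varphi_1)$, $(\varphi_2)$ is determined by its values on the diagonal $\{z = w(-0),\, w(\cdot)\in\mathrm{Lip}\}$ via its uniform continuity). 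By Theorem \ref{teo:Lip_minimax_solution}, this $\varphi$ is the unique $\mathrm{Lip}$-minimax solution on all of $\mathbb G$, and applying Theorem \ref{teo:equivalent_solutions} (equivalence of statements (a) and (e)) identifies it with the unique viscosity solution of (\ref{Hamilton-Jacobi_equation}), (\ref{terminal_condition}).

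The main obstacle I expect is the verification in Lemma \ref{lem:value_functional} that $\hat{\varphi}$ satisfies the dynamic programming inequalities (\ref{def:upper_minmax_solution}), (\ref{def:lower_minmax_solution}) with $\eta = 0$; this requires care because the set $X^0(\tau,z,w(\cdot))$ is defined through the neutral-type differential inclusion (\ref{def:inclusion}) with the exact bound $c_H(1+\|x\|+\|y\|)$, which must be reconciled with admissible trajectories of (\ref{def:neutral_type_equation}) via a measurable selection argument and Gronwall-type a priori bounds adapted to the neutral structure (estimating $x(t)-g(t,x(t-h))$ rather than $x(t)$). The rest of the proof is then a direct chain of citations to the already established theorems.
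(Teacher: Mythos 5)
Your proposal follows essentially the same route as the paper: verify $(H_1)$--$(H_3)$ for the Hamiltonian (\ref{def:hamiltonian}), use Lemma \ref{lem:value_functional} to identify $\hat{\varphi}$ with the functional of Lemma \ref{lem:ex_un_minimax_sol} via the dynamic programming inequalities (where the paper likewise handles (\ref{def:upper_minmax_solution}) by a standard DPP citation and (\ref{def:lower_minmax_solution}) by a piecewise-constant feedback discretization), then extend by Lemma \ref{lem:varphi_extension} and conclude via Theorem \ref{teo:equivalent_solutions}. The argument is correct and matches the paper's proof.
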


\section{Proofs}

\subsection{Auxiliary statements}

\begin{lemma}\label{lem:alpha_x_lambda_x}
For every $\alpha > 0$, there exists $\alpha_X = \alpha_X(\alpha) > 0$, $\alpha_X^* = \alpha_X^*(\alpha) > 0$, and $\lambda_X^* = \lambda_X^*(\alpha) > 0$ such that, for each $\tau \in [0,\vartheta]$ and $(z,w(\cdot)) \in P(\alpha)$ {\em(}see {\em(\ref{def:P}))}, the functions  $x(\cdot) \in X^1(\tau,z,w(\cdot))$ and $y(t) = x(t) - g(t,x(t-h))$, $t \in [\tau,\vartheta]$, satisfy the relations
\begin{equation*}
(x(t),x_t(\cdot)) \in P(\alpha_X),\ \ \|y(t)\|\leq \alpha_X^*,\ \ \|y(t) - y(t')\| \leq \lambda^*_X |t - t'|,\ \
t,t' \in [\tau,\vartheta].
\end{equation*}
\end{lemma}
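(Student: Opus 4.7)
The plan is to bound $\|x(t)\|$ and $\|x(t-h)\|$ uniformly on $[\tau-h,\vartheta]$ by iterating a Gronwall-type argument over the intervals $[\tau+(k-1)h,\tau+kh]\cap[\tau,\vartheta]$, for $k=1,\dots,I+1$, and then use the differential inclusion $(\ref{def:inclusion})$ (with $\eta=1$) together with condition $(g)$ to read off the bounds for $y$ and its Lipschitz constant.

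First, since $(z,w(\cdot)) \in P(\alpha)$, one has $\|x(t)\|\leq \alpha$ for $t \in [\tau-h,\tau]$ directly from the definition of $\Lambda(\tau,z,w(\cdot))$. By condition $(g)$, the continuous function $g$ is bounded on compact sets; let $M_g=M_g(\alpha)=\max\{\|g(t,r)\|:t\in[0,\vartheta],\|r\|\leq\alpha\}$. Then $\|y(\tau)\|\leq\alpha+M_g$. For $t\in[\tau,\tau+h]\cap[\tau,\vartheta]$, we have $x(t-h)=w(t-\tau-h)$ with $\|x(t-h)\|\leq\alpha$, and the inclusion $(\ref{def:inclusion})$ gives
\begin{equation*}
\|y(t)\|\leq \|y(\tau)\|+\int_\tau^t\bigl(c_H(1+\|x(s)\|+\alpha)+1\bigr)\,\mathrm{d}s.
\end{equation*}
Since $x(s)=y(s)+g(s,x(s-h))$ with $\|g(s,x(s-h))\|\leq M_g$, we obtain $\|x(s)\|\leq\|y(s)\|+M_g$. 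Substituting and applying Gronwall's lemma yields a bound $\|y(t)\|\leq C_1(\alpha)$, hence $\|x(t)\|\leq C_1(\alpha)+M_g=:\beta_1(\alpha)$ on the first interval.

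Next, I would set $\alpha^{(1)}=\max\{\alpha,\beta_1(\alpha)\}$ and repeat the argument on $[\tau+h,\tau+2h]\cap[\tau,\vartheta]$, where now $\|x(t-h)\|\leq\alpha^{(1)}$ is bounded by the estimate from step~1, producing $\|x(t)\|\leq\beta_2(\alpha)$ on that interval, and so on. Because $\vartheta=Ih$, the interval $[\tau,\vartheta]$ is covered by at most $I+1$ such sub-intervals independently of $\tau\in[0,\vartheta]$, so the iteration terminates after finitely many steps and produces a single constant $\alpha_X=\alpha_X(\alpha)>0$ with $\|x(t)\|\leq\alpha_X$ for all $t\in[\tau-h,\vartheta]$. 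Since $\|x_t(\cdot)\|_\infty=\sup_{s\in[t-h,t)}\|x(s)\|\leq\alpha_X$, this gives $(x(t),x_t(\cdot))\in P(\alpha_X)$.

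Finally, with $\|x(t)\|,\|x(t-h)\|\leq\alpha_X$ in hand, the right-hand side of $(\ref{def:inclusion})$ is majorized pointwise by the constant $\lambda_X^*:=c_H(1+2\alpha_X)+1$, so the absolutely continuous function $y$ is in fact Lipschitz with constant $\lambda_X^*$ on $[\tau,\vartheta]$; integrating this bound from $\tau$ and using $\|y(\tau)\|\leq\alpha+M_g$ yields $\|y(t)\|\leq\alpha+M_g+\vartheta\lambda_X^*=:\alpha_X^*$. The only real subtlety I anticipate is bookkeeping to ensure the constants depend only on $\alpha$ (and the fixed data $g,H,h,\vartheta$) and not on the specific point $\tau$ or trajectory $x(\cdot)$, which is handled by the fact that $\vartheta=Ih$ bounds the number of iteration steps uniformly.
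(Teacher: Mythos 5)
Your proof is correct and follows essentially the same route as the paper: the paper delegates the bound $(x(t),x_t(\cdot))\in P(\alpha_X)$ to Lemma 6.1 of the cited work (which is exactly the step-by-step Gronwall iteration over intervals of length $h$ that you carry out explicitly), and then derives $\alpha_X^*$ and $\lambda_X^*$ by integrating the differential inclusion just as you do. Your constant $\lambda_X^*=c_H(1+2\alpha_X)+1$ is in fact slightly more careful than the paper's, which drops the $+1$ coming from $\eta=1$ in $F^1$.
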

\begin{proof}
The existence of $\alpha_X$ satisfying the first relation can be proved in the same way as in Lemma 6.1 from \cite{Lukoyanov_Plaksin_2020}. Due to condition $(g)$, there exists $\alpha_g > 0$ such that $\|g(\tau,w(-h))\| \leq \alpha_g$ for any $\tau \in [0,\vartheta]$ and $(z,w(\cdot)) \in P(\alpha)$. Then, from (\ref{def:F}), (\ref{def:inclusion}), the second relation is obtained by
\begin{equation*}
\begin{array}{c}
\|y(t)\| \leq \|z\| + \|g(\tau,w(-h))\| + \displaystyle\int_\tau^t \Big(c_H \big(1 + \|x(\xi)\| + \|x(\xi-h)\|\big) + 1\Big) \mathrm{d} \xi \\[0.4cm]
\leq \alpha + \alpha_g + (c_H (1 + 2 \alpha_X) + 1) \vartheta:= \alpha_X^*,\quad t \in [\tau,\vartheta],
\end{array}
\end{equation*}
and, taking $\lambda_X^* = c_H (1 + 2 \alpha_X)$, the third relation is derived by
\begin{equation*}
\|y(t') - y(t)\| \leq c_H \int_{t}^{t'} \Big(c_H \big(1 + \|x(\xi)\| + \|x(\xi-h)\|\big) + 1\Big) \mathrm{d} \xi \leq \lambda_X^* (t' - t),
\end{equation*}
where, without loss of generality, we assume $\tau \leq t \leq t' \leq \vartheta$.
\end{proof}

\begin{lemma}\label{lem:g_s}
Let the function $g \colon [0,\vartheta] \times \mathbb R^n \mapsto \mathbb R^n$ satisfy $(g)$. Then, for every $\alpha > 0$ there exists $\lambda_g = \lambda_g(\alpha) > 0$ such that
\begin{equation*}
\big|g(t,x) - g(t',x')\big| \leq \lambda_g \big(|t - t'| + \|x - x'\|\big)
\end{equation*}
for any $(t,x), (t',x') \in [0,\vartheta] \times \mathbb R^n$: $\max\{\|x\|,\|x'\|\} \leq \alpha$.
\end{lemma}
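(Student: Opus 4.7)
The plan is to exploit the fact that condition $(g)$ states that $g$ is continuously differentiable on all of $[0,\vartheta] \times \mathbb R^n$, which immediately gives local Lipschitz continuity. The only thing to check is that the Lipschitz constant can be chosen uniformly over the $x$-ball of radius $\alpha$, which follows because $t$ already ranges over a compact interval.

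First I would fix $\alpha > 0$ and consider the compact set $K_\alpha = [0,\vartheta] \times \{x \in \mathbb R^n \colon \|x\| \leq \alpha\}$. By condition $(g)$, the partial derivatives $\partial g / \partial t$ and $\nabla_x g$ are continuous on $[0,\vartheta] \times \mathbb R^n$, hence bounded on $K_\alpha$. Let
\[
\lambda_g = \sup_{(t,x) \in K_\alpha} \bigl(\|\partial g(t,x)/\partial t\| + \|\nabla_x g(t,x)\|\bigr).
\]
Then for any $(t,x), (t',x') \in K_\alpha$, I would bound $\|g(t,x) - g(t',x')\|$ by splitting the increment along the two-segment path $(t,x) \to (t',x) \to (t',x')$ (both segments stay in $K_\alpha$ since the $x$-coordinate is held fixed on the first and $t'$ is fixed on the second). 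The fundamental theorem of calculus applied to $\xi \mapsto g(\xi,x)$ on $[t,t']$ and to $\xi \mapsto g(t',x + \xi(x'-x))$ on $[0,1]$ produces the estimate
\[
\|g(t,x) - g(t',x')\| \leq \lambda_g (|t - t'| + \|x - x'\|).
\]

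There is no real obstacle here; the statement is a standard consequence of continuous differentiability plus compactness, and is included in the paper only as a convenient packaged Lipschitz estimate for $g$ on bounded sets, to be invoked later in the proofs of the main theorems. The only minor bookkeeping point is to ensure that the intermediate point $(t',x)$ used in the path lies in $K_\alpha$, which is immediate from $\|x\| \leq \alpha$ and $t' \in [0,\vartheta]$.
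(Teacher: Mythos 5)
Your proof is correct and is the standard argument; the paper states this lemma without any proof at all, treating it as an immediate consequence of condition $(g)$ and compactness, which is exactly what you supply. The only cosmetic point is that for the second segment of your path you implicitly use convexity of the ball $\{\|x\|\leq\alpha\}$ to keep the segment from $x$ to $x'$ inside $K_\alpha$, which is immediate.
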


\begin{lemma}\label{lem:lambda_x_local}
Let $\alpha_0,\lambda_0 > 0$. There exists $\lambda_X = \lambda_X(\alpha_0, \lambda_0) > 0$ with the following property. Let $(\tau,z,w(\cdot)) \in \mathbb G_*$. Let $\delta_w > 0$ be such that $w(\cdot)$ is continuously differentiable on $[-h, -h + \delta_w]$ {\rm(}see {\em(\ref{def:G_G_s}))}. Let the relations
\begin{equation}\label{lem:lambda_x_local:condition}
(z,w(\cdot)) \in P(\alpha_0),\quad \|w(\xi) - w(\xi')\| \leq \lambda_0 |\xi - \xi'|,\quad \xi,\xi' \in [-h,-h + \delta_w],
\end{equation}
hold. Then, every function $x(\cdot) \in X^1(\tau,z,w(\cdot))$ satisfies the inequality
\begin{equation}\label{lem:lambda_x_local:statement}
\|x(t) - x(t')\| \leq \lambda_X |t - t'|,\quad t,t' \in [\tau, \tau + \delta_w].
\end{equation}
\end{lemma}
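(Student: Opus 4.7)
The plan is to exploit the decomposition $x(t) = y(t) + g(t, x(t-h))$, where by Lemma~\ref{lem:alpha_x_lambda_x} (applied at level $\alpha = \alpha_0$) the function $y$ is already Lipschitz on all of $[\tau, \vartheta]$ with a constant $\lambda^*_X = \lambda^*_X(\alpha_0)$ depending only on $\alpha_0$. Hence proving (\ref{lem:lambda_x_local:statement}) reduces to bounding the increments of the delay term $t \mapsto g(t, x(t-h))$ on $[\tau, \tau + \delta_w]$ by a multiple of $|t - t'|$ in which the multiplier depends only on $\alpha_0$ and $\lambda_0$.

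The next step is to pin down the value of $x(t-h)$ on this short window. Since every $w(\cdot) \in \mathrm{PLip}$ has domain $[-h, 0)$, the parameter $\delta_w$ in the definition of $\mathrm{PLip}_*$ necessarily satisfies $\delta_w < h$. Consequently, for every $t \in [\tau, \tau + \delta_w]$ one has $t - h < \tau$, and the definition of $\Lambda(\tau, z, w(\cdot))$ gives $x(t - h) = w(t - \tau - h)$ with $t - \tau - h \in [-h, -h + \delta_w]$. On this sub-interval assumption (\ref{lem:lambda_x_local:condition}) forces
\[
\|x(t - h) - x(t' - h)\| \leq \lambda_0 \, |t - t'|, \qquad t, t' \in [\tau, \tau + \delta_w].
\]

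The final step combines the bounds. Lemma~\ref{lem:alpha_x_lambda_x} also supplies the uniform bound $\max\{\|x(t)\|, \|x(t-h)\|\} \leq \alpha_X(\alpha_0)$ on $[\tau, \vartheta]$, so applying Lemma~\ref{lem:g_s} at level $\alpha = \alpha_X(\alpha_0)$ yields a constant $\lambda_g = \lambda_g(\alpha_X(\alpha_0))$ with
\[
\|g(t, x(t-h)) - g(t', x(t'-h))\| \leq \lambda_g \bigl(|t - t'| + \|x(t-h) - x(t'-h)\|\bigr) \leq \lambda_g(1 + \lambda_0) \, |t - t'|.
\]
Adding the Lipschitz estimate $\|y(t) - y(t')\| \leq \lambda^*_X |t - t'|$ and using the triangle inequality on $x(t) - x(t') = (y(t) - y(t')) + (g(t,x(t-h)) - g(t',x(t'-h)))$ yields (\ref{lem:lambda_x_local:statement}) with
\[
\lambda_X \;=\; \lambda_X^*(\alpha_0) + \lambda_g\bigl(\alpha_X(\alpha_0)\bigr)(1 + \lambda_0),
\]
a quantity depending only on $\alpha_0$ and $\lambda_0$.

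There is essentially no serious obstacle: the entire argument is a direct combination of the two preceding lemmas with the assumed regularity of $w$ on the window $[-h, -h + \delta_w]$. The only subtlety worth flagging is bookkeeping, namely that the Lipschitz constant $\lambda_X$ produced must be independent of the particular $\delta_w$; this is transparent because $\delta_w$ never enters any of the estimates above, only the interval on which they are asserted.
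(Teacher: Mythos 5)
Your proposal is correct and follows essentially the same route as the paper: decompose $x(t)=y(t)+g(t,x(t-h))$, invoke Lemma~\ref{lem:alpha_x_lambda_x} for the Lipschitz constant $\lambda_X^*$ of $y$, observe that $x(t-h)=w(t-\tau-h)$ is $\lambda_0$-Lipschitz on the window by (\ref{lem:lambda_x_local:condition}), and apply Lemma~\ref{lem:g_s}, arriving at the same constant $\lambda_X=\lambda_X^*+\lambda_g(1+\lambda_0)$ (the paper takes $\lambda_g$ at level $\alpha_0$ rather than $\alpha_X(\alpha_0)$, which is immaterial since the arguments of $g$ are values of $w$ and hence bounded by $\alpha_0$).
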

\begin{proof}
According to Lemmas \ref{lem:alpha_x_lambda_x}, \ref{lem:g_s}, define $\lambda_X^* = \lambda_X^*(\alpha_0)$ and $\lambda_g = \lambda_g(\alpha_0)$. Put $\lambda_X = \lambda_X^* + \lambda_g (1+ \lambda_0)$. Then the inequality (\ref{lem:lambda_x_local:statement}) follows from the estimates
\begin{equation*}
\|x(t) - x(t')\| \leq \|y(t) - y(t')\| + \|g(t,w(t-\tau-h)) - g(t',w(t'-\tau-h))\| \leq \lambda_X (t - t').
\end{equation*}
The lemma has been proved.
\end{proof}

Let $(z,w(\cdot)) \in \mathbb R^n \times \mathrm{PLip}$. Denote by $\Gamma(z,w(\cdot))$ the set of sequences $\{w^j(\cdot)\}_{j \in \mathbb N} \subset \mathrm{Lip}$ such that
\begin{equation}\label{def:Gamma}
\begin{array}{c}
\|w(\cdot) - w^j(\cdot)\|_1 \to 0,\quad \|z - w^j(-0)\| \to 0,\\[0.2cm]
\|w(\xi) - w^j(\xi)\| \to 0,\quad \xi \in [-h,0),
\end{array}
\quad \text{as } j \to \infty.
\end{equation}

\begin{lemma}\label{lem:Gamma_nonempty}
For each $(z,w(\cdot)) \in \mathbb R^n \times \mathrm{PLip}$, there exists a sequence $\{w^j(\cdot)\}_{j \in \mathbb N} \in \Gamma(z,w(\cdot))$ such that
\begin{description}
\item[$(w^j_1)$] The inclusion $w^j(\cdot) \in \mathrm{C}^1$ holds for any $j \in \mathbb N$.
\vspace{0.1cm}

\item[$(w^j_2)$] The inequality $\|w^j(\cdot)\|_\infty \leq \max\{\|z\|,\|w(\cdot)\|_\infty\}$ holds for any $j \in \mathbb N$.
\vspace{0.1cm}

\item[$(w^j_3)$] For every $\xi \in (-h,0)$ satisfying the equality $w(\xi-0) = w(\xi)$, there exists $\delta > 0$ such that
\begin{equation*}
\max\limits_{\zeta \in [-\delta,\delta]} \|w(\xi+\zeta) - w^j(\xi+\zeta)\| \to 0 \text{ as } j \to \infty.
\end{equation*}
\end{description}
\end{lemma}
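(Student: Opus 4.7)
The plan is to construct $w^j$ by a \emph{forward one-sided} mollification of a bounded extension of $w(\cdot)$ to all of $\mathbb R$. The forward shift will automatically force $w^j(-0)=z$, while the mollification delivers smoothness and the remaining convergences.

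Define $\tilde w\colon \mathbb R \to \mathbb R^n$ by $\tilde w(\xi) = w(\xi)$ for $\xi \in [-h,0)$, $\tilde w(\xi) = z$ for $\xi \geq 0$, and $\tilde w(\xi) = w(-h)$ for $\xi \leq -h$; thus $\tilde w$ is bounded and measurable with $\|\tilde w\|_\infty \leq \max\{\|z\|,\|w(\cdot)\|_\infty\}$. Fix a nonnegative $\eta \in \mathrm{C}^\infty(\mathbb R)$ with $\mathrm{supp}\,\eta \subset [0,1]$ and $\int_\mathbb R \eta = 1$; set $\eta_j(\zeta) = j\eta(j\zeta)$ and
\[
w^j(\xi) \;=\; \int_0^{1/j} \tilde w(\xi+\zeta)\,\eta_j(\zeta)\,\mathrm{d}\zeta, \qquad \xi \in [-h,0).
\]
Differentiation under the integral sign yields $w^j \in \mathrm{C}^\infty \subset \mathrm{C}^1$, giving $(w^j_1)$; since $\eta_j$ is a probability density, $\|w^j(\xi)\| \leq \|\tilde w\|_\infty \leq \max\{\|z\|,\|w(\cdot)\|_\infty\}$, which is $(w^j_2)$.

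Next I verify membership in $\Gamma(z,w(\cdot))$. The endpoint equality is automatic: since $w^j$ extends smoothly through $0$ and $\tilde w \equiv z$ on $[0,1/j]$, one computes $w^j(-0) = \int_0^{1/j} z\,\eta_j(\zeta)\,\mathrm{d}\zeta = z$. For pointwise convergence at $\xi \in [-h,0)$, once $j$ is so large that $\xi+1/j < 0$, the integral only samples $w$ on $[\xi,\xi+1/j]$, and right-continuity of $w$ at $\xi$ makes the $\eta_j$-average converge to $w(\xi)$. The $L^1$-convergence follows from Fubini and $L^1$-translation continuity:
\[
\|w(\cdot) - w^j(\cdot)\|_1 \;\leq\; \int_0^{1/j} \eta_j(\zeta)\!\int_{-h}^0 \|w(\xi) - \tilde w(\xi+\zeta)\|\,\mathrm{d}\xi\,\mathrm{d}\zeta,
\]
and the inner integral tends to $0$ uniformly in $\zeta \in (0,1/j]$ by translation continuity of $w$ on $[-h,-\zeta)$ together with the trivial bound $\zeta(\|z\|+\|w(\cdot)\|_\infty)$ on the residual interval where $\tilde w = z$.

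Finally, for $(w^j_3)$, take $\xi \in (-h,0)$ with $w(\xi-0)=w(\xi)$. The piecewise Lipschitz structure of $w$ with finitely many jumps supplies $\delta > 0$ and $L > 0$ such that $w$ is $L$-Lipschitz on $[\xi-\delta,\xi+\delta] \subset [-h,0)$. For $j$ large enough that $\xi+\delta+1/j < 0$, the estimate $\|w(\xi+\tau) - w^j(\xi+\tau)\| \leq L/j$ holds uniformly for $\tau \in [-\delta,\delta]$, yielding the required uniform convergence. The one genuine subtlety of the proof is the choice of a one-sided mollifier: a symmetric kernel would leave $w^j(-0)$ as a nontrivial average of values of $w$ just left of $0$ and of $z$, which in general does not converge to $z$; the forward shift forces $w^j$ near the endpoint to sample $\tilde w$ only on $[0,1/j]$, making the endpoint condition automatic without disturbing the other convergences.
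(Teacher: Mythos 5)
Your construction is exactly the paper's: the author also uses a forward one-sided mollifier (a kernel $\beta\ge 0$ supported in $[0,1]$, rescaled to $[0,1/j]$) applied to the extension of $w(\cdot)$ by the constant $z$ to the right of $0$, and leaves the verifications as "one can show." Your write-up supplies those verifications correctly (the only cosmetic point is that in $(w^j_3)$ the averaging window $[\xi+\tau,\xi+\tau+1/j]$ slightly overshoots $[\xi-\delta,\xi+\delta]$, which is repaired by halving $\delta$), so the proposal is correct and takes essentially the same approach.
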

\begin{proof}
Let us take a continuously differentiable function $\beta(\cdot) \colon (- \infty, + \infty) \mapsto \mathbb [0,+\infty)$ such that
$$
\beta(\xi) = 0,\quad \xi \in (-\infty,0]\cup [1,+\infty),\quad \int_{-\infty}^{+\infty} \beta(\zeta) \mathrm{d}\zeta = 1.
$$
Let $\overline{w}(\cdot)\colon [-h,1] \mapsto \mathbb R^n$ be such that $\overline{w}(\xi) = w(\xi)$, $\xi \in [-h,0)$ and $\overline{w}(\xi) = z$, $\xi \in [0,1]$. Then one can show the functions
$$
w^j(\xi) = j \int_{-\infty}^{+\infty} \beta(j \zeta) \overline{w}(\xi + \zeta) \mathrm{d} \zeta,\quad j \in \mathbb N,
$$
satisfy the statements of the lemma.
\end{proof}

\begin{lemma}\label{lem:Gamma_motion}
Let $(\tau,z,w(\cdot)) \in \mathbb G$. Let $\{w^j(\cdot)\}_{j \in \mathbb N} \subset \Gamma(z,w(\cdot))$ satisfy conditions $(w^j_1)$--$(w^j_3)$. Let $x^j(\cdot) \in X^{1/j}(\tau,w^j(-0),w^j(\cdot))$, $j \in \mathbb N$. Then there exist a subsequence $x^{\,j_m}(\cdot)$, $m \in \mathbb N$ and a function $x(\cdot) \in X^0(\tau,z,w(\cdot))$ such that $\{x^{j_m}_t(\cdot)\}_{m \in \mathbb N} \in \Gamma(x(t),x_t(\cdot))$ for any $t \in [\tau, \min\{\tau + h,\vartheta\})$.
\end{lemma}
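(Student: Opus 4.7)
The plan is to extract a uniformly convergent subsequence of $y^j(t) = x^j(t) - g(t, x^j(t-h))$ on $[\tau, \vartheta]$, reconstruct the candidate $x(\cdot)$ from the limit $y(\cdot)$ via the recursion $x(t) = y(t) + g(t, x(t-h))$, and then verify both membership in $X^0(\tau, z, w(\cdot))$ and the three convergence conditions defining $\Gamma(x(t), x_t(\cdot))$.

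First, $(w^j_2)$ together with $w^j(-0) \to z$ imply $\{(w^j(-0), w^j(\cdot))\}_j \subset P(\alpha_0)$ for some $\alpha_0 > 0$, so Lemma \ref{lem:alpha_x_lambda_x} supplies constants $\alpha_X, \alpha_X^*, \lambda_X^*$ independent of $j$ with $\|y^j(t)\| \leq \alpha_X^*$ and $\|y^j(t) - y^j(t')\| \leq \lambda_X^* |t - t'|$. By Arzela--Ascoli, pass to a subsequence $y^{j_m} \to y$ uniformly on $[\tau, \vartheta]$, with $y$ Lipschitz. Define $x(t) = w(t - \tau)$ on $[\tau - h, \tau)$, $x(\tau) = z$, and $x(t) = y(t) + g(t, x(t-h))$ on $[\tau, \vartheta]$, extended successively over intervals of length $h$; condition $(g)$ combined with the piecewise Lipschitz structure of $w$ inductively yields $x \in \mathrm{PLip}([\tau - h, \vartheta], \mathbb R^n)$. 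Consistency at $\tau$ follows since $y^{j_m}(\tau) = w^{j_m}(-0) - g(\tau, w^{j_m}(-h)) \to z - g(\tau, w(-h))$ by pointwise convergence from (\ref{def:Gamma}), so $x(\tau) = y(\tau) + g(\tau, w(-h)) = z$.

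Next, verify the differential inclusion for $x$. Inductively on $k = 0, 1, \ldots, I-1$, one shows pointwise convergence $x^{j_m}(t) \to x(t)$ on $[\tau + kh, \tau + (k+1)h) \cap [\tau, \vartheta]$: the base case uses $x^{j_m}(t) = y^{j_m}(t) + g(t, w^{j_m}(t - \tau - h))$ together with uniform convergence of $y^{j_m}$, pointwise convergence $w^{j_m}(\xi) \to w(\xi)$ from (\ref{def:Gamma}), and continuity of $g$; the inductive step is analogous with $x^{j_m}(t-h)$ in place of $w^{j_m}(t - \tau - h)$. Since $\dot y^{j_m}$ is uniformly bounded in $L^\infty$ and $y^{j_m} \to y$ uniformly, $\dot y^{j_m}$ converges weakly-$\ast$ to $\dot y$ in $L^\infty([\tau, \vartheta], \mathbb R^n)$. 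Because $\dot y^{j_m}(t)$ lies in the ball centred at $0$ of radius $c_H(1 + \|x^{j_m}(t)\| + \|x^{j_m}(t - h)\|) + 1/j_m$ and these radii converge for a.e.\ $t$ to the radius of $F^0(x(t), x(t-h))$, Mazur's lemma applied to $\dot y^{j_m}$ (converting weak-$\ast$ convergence into strong a.e.\ convergence of convex combinations) yields $\dot y(t) \in F^0(x(t), x(t-h))$ for a.e.\ $t \in [\tau, \vartheta]$, completing $x \in X^0(\tau, z, w(\cdot))$.

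Finally, for the $\Gamma$-convergence at $t \in [\tau, \min\{\tau + h, \vartheta\})$: continuity of $x^{j_m}$ gives $x^{j_m}_t(-0) = x^{j_m}(t) \to x(t)$; for each $\xi \in [-h, 0)$, $\|x_t(\xi) - x^{j_m}_t(\xi)\| \to 0$ by pointwise convergence of $x^{j_m}$ on $[\tau, t)$ when $t + \xi \geq \tau$, or by pointwise convergence $w^{j_m}(\zeta) \to w(\zeta)$ when $t + \xi < \tau$; and $\|x_t(\cdot) - x^{j_m}_t(\cdot)\|_1$ splits analogously, with the $\xi \in [-h, \tau - t)$ portion bounded by $\|w(\cdot) - w^{j_m}(\cdot)\|_1 \to 0$ and the $\xi \in [\tau - t, 0)$ portion handled by dominated convergence using the uniform bounds and the pointwise convergence of $x^{j_m}$. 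The principal obstacle is passing to the limit in the inclusion when the balls $F^{1/j_m}(x^{j_m}(t), x^{j_m}(t-h))$ depend on arguments varying with $m$; this is resolved by the combination of Mazur's lemma with the pointwise-in-$t$ convergence of $x^{j_m}$ established inductively interval by interval.
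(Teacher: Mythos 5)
Your proposal is correct and shares the paper's overall skeleton: uniform bounds and equi-Lipschitz estimates for $y^j(t)=x^j(t)-g(t,x^j(t-h))$ from Lemma \ref{lem:alpha_x_lambda_x}, Arzel\`a--Ascoli to extract a uniform limit $y(\cdot)$, reconstruction of $x(\cdot)$ from $y(\cdot)$, and then verification of the $\Gamma$-convergence and of the inclusion $x(\cdot)\in X^0(\tau,z,w(\cdot))$. The genuine difference is in the closure argument for the differential inclusion. The paper works pointwise at a.e.\ $t$: it uses condition $(w^j_3)$ to get \emph{locally uniform} convergence of $w^j$ near each left-continuity point of $w$, deduces a bound on $\|\mathrm{d}y^j(t+\zeta)/\mathrm{d}t\|$ valid on a whole interval $\zeta\in[-\delta,\delta]$, and then invokes Lemma 12 of Filippov to transfer that bound to $\|\mathrm{d}y(t)/\mathrm{d}t\|$. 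You instead pass to the weak-$*$ limit of $\dot y^{j_m}$ in $L^\infty$ and apply Mazur's lemma (to tails, so that the $k$-th convex combination uses only indices $\geq N_k\to\infty$), which needs only \emph{pointwise} convergence of $x^{j_m}(t)$ and $x^{j_m}(t-h)$ together with convexity of the balls $F^\eta$; this sidesteps $(w^j_3)$ entirely and avoids the paper's $\varepsilon$--$\delta$ bookkeeping, at the price of invoking functional-analytic machinery where the paper cites a ready-made ODE lemma. A second, minor divergence: on $[\min\{\tau+h,\vartheta\},\vartheta]$ the paper takes the trivial continuation $x(t)=g(t,x(t-h))$, whereas you continue with $x(t)=y(t)+g(t,x(t-h))$ and verify the inclusion there by your interval-by-interval induction; both are admissible since the lemma's conclusion only concerns $t<\min\{\tau+h,\vartheta\}$, and your choice has the advantage of keeping a single Lipschitz function $y(\cdot)$ on all of $[\tau,\vartheta]$.
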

\begin{proof}
Let $\alpha_0 = \max\{\|z\|, \|w(\cdot)\|_\infty\}$. In accordance with Lemma \ref{lem:alpha_x_lambda_x}, define $\alpha^*_X = \alpha^*_X(\alpha_0)$ and $\lambda^*_X = \lambda^*_X(\alpha_0)$. Then, for the functions $y^j(t) = x^j(t) - g(t,x^j(t-h))$, $t \in [\tau,\vartheta]$, $j \in \mathbb N$, we have
\begin{equation}\label{lem:Gamma_motion:yi}
\|y^j(t)\| \leq \alpha_X^*,\quad \|y^j(t) - y^j(t')\| \leq \lambda_X^* |t - t'|,\quad t,t' \in [\tau,\vartheta].
\end{equation}
Due to these estimates and Arzela-Ascoli theorem (see, e.g., \cite[p. 207]{Natanson_1960}), without loss of generality, we can suppose that there exists a continuous function $y(\cdot) \colon [\tau,\vartheta] \mapsto \mathbb R^n$ such that
\begin{equation}\label{lem:Gamma_motion:y}
\max\limits_{t \in [\tau,\vartheta]} \|y(t) - y^j(t)\| \to 0\text{ as } j \to \infty.
\end{equation}
From (\ref{lem:Gamma_motion:yi}) and (\ref{lem:Gamma_motion:y}), one can establish the Lipschitz continuity of $y(\cdot)$. Denote $\tau_h = \min\{\tau + h,\vartheta\}$. Define the function $x(\cdot)$ so that
\begin{equation}\label{lem:Gamma_motion:x}
x(t) =
\left\{
\begin{array}{ll}
w(t-\tau), \text{if } t \in [\tau-h, \tau),\\[0.2cm]
y(t) + g(t,w(t-\tau-h)),\ \text{if } t \in [\tau, \tau_h),\\[0.2cm]
g(t,x(t-h)),\ \text{if } t \in [\tau_h, \vartheta].
\end{array} \right.
\end{equation}
Choose $\lambda_g = \lambda_g(\alpha_0)$ according to Lemma \ref{lem:g_s}. Then, taking into account definitions of $x^j(\cdot)$, $y^j(\cdot)$, and $x(\cdot)$ and condition $(w^j_2)$, we have
\begin{equation*}
\|x(\xi) - x^j(t)\| \leq \|y(t) - y^j(t)\| + \lambda_g \|w(t-\tau-h) - w^j(t-\tau-h)\|,\quad \xi \in [\tau, \tau_h).
\end{equation*}
\begin{equation}\label{lem:Gamma_motion:lambda_g}
\|x(t) - x^j(t)\| = \|w(t - \tau) - w^j(t - \tau)\|,\quad t \in [\tau-h, \tau),
\end{equation}
From these estimates, taking into account the inclusion $\{w^j(\cdot)\}_{j \in \mathbb N} \in \Gamma(z, w(\cdot))$ and (\ref{lem:Gamma_motion:y}), we obtain $\{x^j_t(\cdot)\}_{j \in \mathbb N} \in \Gamma(x(t), x_t(\cdot))$ for any $t \in [\tau, \tau_h)$.

Let us show the inclusion $x(\cdot) \in X^0(\tau,z,w(\cdot))$. Firstly, according to (\ref{lem:Gamma_motion:x}), (\ref{lem:Gamma_motion:lambda_g}), the inclusion $x(\cdot) \in \Lambda(\tau,z,w(\cdot))$ holds. Note that, due to (\ref{lem:Gamma_motion:x}), the function $x(\cdot)$ satisfies (\ref{def:inclusion}) for every $t \in (\tau_h, \vartheta]$ in which $\eta = 0$. Let $t \in (\tau, \tau_h)$ be such that there exists $\mathrm{d} y(t)/ \mathrm{d} t$ and $w(t-\tau-h-0) = w(t-\tau-h)$. Let $\varepsilon > 0$. Then, due to the inclusion $\{w^j(\cdot)\}_{j \in \mathbb N} \in \Gamma(z,w(\cdot))$ and relations (\ref{lem:Gamma_motion:yi}), (\ref{lem:Gamma_motion:y}), there exist $\delta \in (0,\min\{t-\tau, \tau_h-t, \varepsilon / (9 c_H \lambda_g)\})$ and $j_* > 0$ such that
\begin{equation}\label{lem:Gamma_motion:delta}
\begin{array}{c}
\|w(t-\tau-h) - w^j(t-\tau-h+\zeta)\| \leq \varepsilon / (3 c_H\max\{1, 3 \lambda_g\}),\\[0.2cm]
\|y(t) - y^j(t + \zeta)\| \leq \varepsilon / (9 c_H),\quad 1/j \leq \varepsilon / 3,
\end{array}
\end{equation}
for any $\zeta \in [-\delta,\delta]$ and $j \geq j_*$, where $c_H$ is from condition $(H_2)$. From these estimates, choice of $\lambda_g$, and definitions of $x^j(\cdot)$, $y^j(\cdot)$, and $x(\cdot)$, we derive
\begin{equation*}
\begin{array}{c}
\|x(t) - x^j(t + \zeta)\| \leq \|y(t) - y^j(t + \zeta)\| \\[0.2cm]
+  \lambda_g \big(\zeta + \|w(t-\tau-h) - w^j(t-\tau-h+\zeta)\|\big)
\leq \varepsilon / (3 c_H).
\end{array}
\end{equation*}
Then, due to the inclusion $x^j(\cdot) \in X^{1/j}(\tau,w^j(-0),w^j(\cdot))$, and the first and the third inequalities in (\ref{lem:Gamma_motion:delta}), we derive
\begin{equation*}
\|\mathrm{d}\, y^j(t + \zeta) / \mathrm{d} t\| \leq c_H \big(\|x(t)\| + \|w(t-\tau-h)\|\big)  + \varepsilon\ \text{for a.e. }\ \zeta \in [-\delta,\delta].
\end{equation*}
Using Lemma 12 from \cite[p. 63]{Filippov_1988}, we obtain
\begin{equation*}
\|\mathrm{d} y(t) / \mathrm{d} t\|\leq c_H\big(\|x(t)\| + \|w(t-\tau-h)\|\big) + \varepsilon.
\end{equation*}
This estimate holds for every $\varepsilon > 0$ that means for $\varepsilon = 0$. Thus, the inclusion $x(\cdot) \in X^0(\tau,z,w(\cdot))$ is proved.
\end{proof}

\begin{lemma}\label{lem:dg}
Let $(\tau,z,w(\cdot)) \in \mathbb G_*$. Let $\delta_w \in (0,h)$ be such that $w(\cdot)$ is continuously differentiable on $[-h, -h + \delta_w]$. Then the inclusion $(t,x(t),x_t(\cdot)) \in \mathbb G_*$ holds for any $t \in [\tau, \tau + \delta_w]$ {\rm(}see {\em(\ref{def:G_G_s}))}, and
the following functions coincide and are continuous:
\begin{equation*}
\overline{g}(t) := \frac{\mathrm{d}}{\mathrm{d} t} \big(g(t,x(t-h)\big) = \partial^{ci}_{\tau,w} g(t,x_t(\cdot)) = \partial^{ci}_{\tau,w} g(t,\kappa_t(\cdot)),\quad t \in [\tau, \tau + \delta_w].
\end{equation*}
for any $x(\cdot) \in \Lambda(\tau,z,w(\cdot))$ and $\kappa(\cdot) \in \Lambda_0(\tau,z,w(\cdot))$.
\end{lemma}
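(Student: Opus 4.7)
The plan is to reduce everything to an explicit chain-rule computation, once the key structural observation is in place: along any admissible continuation $x(\cdot)$, the tail $x_t(\cdot)$ restricted to a neighborhood of $-h$ is just a time-shift of $w(\cdot)$ on $[-h,-h+\delta_w]$, so its right derivative at $-h$ is $w'(t-\tau-h)$.

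First I would verify the inclusion $(t,x(t),x_t(\cdot)) \in \mathbb G_*$. Since $x(\cdot)\in \Lambda(\tau,z,w(\cdot))$ satisfies $x(s)=w(s-\tau)$ for $s\in[\tau-h,\tau)$, for any $t\in[\tau,\tau+\delta_w]$ with $\delta_w<h$ we have $x_t(\xi)=x(t+\xi)=w(\xi+(t-\tau))$ whenever $\xi+(t-\tau)<0$, in particular for $\xi\in[-h,-h+\delta_w-(t-\tau))$. Because $w(\cdot)$ is continuously differentiable on $[-h,-h+\delta_w]$, this shows $x_t(\cdot)\in\mathrm{PLip}_*$ (shrinking $\delta_w$ by an arbitrarily small amount handles the endpoint $t=\tau+\delta_w$). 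Shrinking $\delta_w$ further if necessary so that $[\tau,\tau+\delta_w]$ stays inside the single interval $(ih,(i+1)h)$ containing $\tau$ completes this step.

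Second, I would compute the three expressions in turn. By (\ref{def:dg_s}) and (\ref{def:derivative_g}),
$$
\partial^{ci}_{\tau,w}g(t,x_t(\cdot))=G\bigl(t,\,x_t(-h),\,\mathrm{d}^+x_t(-h)/\mathrm{d}\xi\bigr),
$$
and by the shifting identity above, $x_t(-h)=w(t-\tau-h)$ and $\mathrm{d}^+x_t(-h)/\mathrm{d}\xi=w'(t-\tau-h)$, so
$$
\partial^{ci}_{\tau,w}g(t,x_t(\cdot))=G\bigl(t,w(t-\tau-h),w'(t-\tau-h)\bigr).
$$
Since $\kappa(\cdot)\in\Lambda_0(\tau,z,w(\cdot))$ also agrees with $s\mapsto w(s-\tau)$ on $[\tau-h,\tau)$, the identical local formula gives the same value for $\partial^{ci}_{\tau,w}g(t,\kappa_t(\cdot))$. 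On the other hand, for $\overline g(t)=(\mathrm{d}/\mathrm{d}t)\,g(t,x(t-h))$, the fact that $t-h\in[\tau-h,\tau)$ gives $x(t-h)=w(t-\tau-h)$, so condition $(g)$ and the classical chain rule yield
$$
\overline g(t)=\frac{\partial g}{\partial t}(t,w(t-\tau-h))+\nabla_x g(t,w(t-\tau-h))\,w'(t-\tau-h)=G\bigl(t,w(t-\tau-h),w'(t-\tau-h)\bigr),
$$
so all three agree. Continuity on $[\tau,\tau+\delta_w]$ then follows from the continuity of $\partial g/\partial t$, $\nabla_x g$ (condition $(g)$) and of $w'$ on $[-h,-h+\delta_w]$.

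I do not expect a serious obstacle; the argument is essentially bookkeeping. The only delicate points are the two domain issues — ensuring $x_t(\cdot)$ retains an interval of continuous differentiability at $-h$ of positive length, and keeping $t$ inside the same component $(ih,(i+1)h)$ as $\tau$ — both of which are handled by a harmless reduction of $\delta_w$.
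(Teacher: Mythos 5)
Your argument is correct and is exactly the computation the paper leaves implicit: its own proof of this lemma is a one-line remark that the claim ``directly follows from definition (\ref{def:derivative_g})'', and your shift-identity plus chain-rule calculation is the content behind that remark. Your two caveats (the endpoint $t=\tau+\delta_w$ and keeping $[\tau,\tau+\delta_w]$ inside one interval $(ih,(i+1)h)$) are genuine minor imprecisions in the lemma's statement, which the paper itself handles elsewhere by choosing $\delta_*$ via Lemma \ref{lem:delta_s}; your harmless shrinking of $\delta_w$ resolves them in the same spirit.
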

\begin{proof}
is directly following from definition (\ref{def:derivative_g}) of $\partial^{ci}_{\tau,w} g(\tau,w(\cdot))$.
\end{proof}

\begin{lemma}\label{lem:kappa}
Let $(z,w(\cdot)) \in \mathbb R^n \times \mathrm{PLip}$. Then, for every $\varepsilon > 0$, there exists $\nu = \nu(\varepsilon) > 0$ such that, for every $\tau \in [0,\vartheta]$ and $t, t' \in [\tau,\vartheta]$ satisfying $|t - t'| \leq \nu$ the inequality below holds:
\begin{displaymath}
\|\kappa_t(\cdot) - \kappa_{t'}(\cdot)\|_1 \leq \varepsilon,\quad \kappa(\cdot) \in \Lambda_0(\tau,z,w(\cdot)).
\end{displaymath}
\end{lemma}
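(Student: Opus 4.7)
The plan is to rewrite $\|\kappa_t(\cdot)-\kappa_{t'}(\cdot)\|_1$ as a translation integral of the single function $\kappa \in \mathrm{PLip}([\tau-h,\vartheta],\mathbb{R}^n)$, and then exploit the very rigid structure of $\kappa$: on $[\tau,\vartheta]$ it is constant equal to $z$, and on $[\tau-h,\tau)$ it is the piecewise Lipschitz function $w(\cdot-\tau)$. Since every $\kappa(\cdot)\in\Lambda_0(\tau,z,w(\cdot))$ is this same function, the bound will be independent of the choice of $\kappa$.

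Without loss of generality assume $\tau\le t\le t'\le\vartheta$ and set $h_1=t'-t$. The substitution $s=t+\xi$ gives
\[
\|\kappa_t(\cdot)-\kappa_{t'}(\cdot)\|_1=\int_{t-h}^{t}\|\kappa(s)-\kappa(s+h_1)\|\,ds.
\]
I would then split the range $[t-h,t]$ into three subsets according to the location of $s$ and $s+h_1$. If both $s,s+h_1\in[\tau,\vartheta]$, the integrand vanishes. If $s\in[\tau-h_1,\tau)$ and $s+h_1\in[\tau,\tau+h_1]$, the integrand is at most $\|w(\cdot)\|_\infty+\|z\|$ on a set of length at most $h_1$, so this boundary region contributes $\le h_1(\|w(\cdot)\|_\infty+\|z\|)$. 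If both $s,s+h_1\in[\tau-h,\tau)$, the integrand equals $\|w(s-\tau)-w(s+h_1-\tau)\|$, and a further change of variable $u=s-\tau$ reduces this to $\int\|w(u)-w(u+h_1)\|\,du$ over a subinterval of $[-h,-h_1)$ that does not depend on $\tau$.

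To bound the $w$-translation integral, I would use the partition $-h=\xi_1<\xi_2<\cdots<\xi_k=0$ witnessing the piecewise Lipschitz property of $w(\cdot)$, with Lipschitz constants $L_i$ on $[\xi_i,\xi_{i+1})$. Choose $\nu<\min_i(\xi_{i+1}-\xi_i)$ so that for $h_1\le\nu$ the shift by $h_1$ crosses at most one breakpoint in each piece. On each subinterval $[\xi_i,\xi_{i+1}-h_1)$ the Lipschitz bound gives $\|w(u)-w(u+h_1)\|\le L_i h_1$, while on each leftover strip of length $h_1$ immediately before $\xi_{i+1}$ we use $\|w(u)-w(u+h_1)\|\le 2\|w(\cdot)\|_\infty$. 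Summing yields
\[
\int_{-h}^{-h_1}\|w(u)-w(u+h_1)\|\,du\le h_1\bigl(h\max_i L_i+2k\|w(\cdot)\|_\infty\bigr),
\]
and combining with the boundary contribution, the whole expression is bounded by $C(w,z)\,h_1$ for a constant depending only on $w(\cdot)$ and $z$. Choosing $\nu=\nu(\varepsilon)$ small enough so that $C(w,z)\,\nu\le\varepsilon$ completes the argument.

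The main obstacle is conceptual rather than technical: one must notice that the $L^1$ modulus of continuity of $\kappa_t(\cdot)$ with respect to $t$ is really the $L^1$ modulus of translation for the fixed function $\kappa$, and that uniformity in $\tau\in[0,\vartheta]$ and $\kappa(\cdot)\in\Lambda_0(\tau,z,w(\cdot))$ comes for free because the partition points and Lipschitz constants of $w(\cdot)$, together with $\|w(\cdot)\|_\infty$ and $\|z\|$, are the only data entering the estimate.
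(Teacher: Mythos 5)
Your argument is correct, but it takes a different route from the paper. The paper's proof is a one-line pointer: approximate $(z,w(\cdot))$ by Lipschitz continuous functions via Lemma~\ref{lem:Gamma_nonempty}, use the elementary translation estimate $\|\kappa^j_t(\cdot)-\kappa^j_{t'}(\cdot)\|_1\le L_j|t-t'|$ for the Lipschitz approximants $w^j(\cdot)$, and absorb the approximation error $\|w(\cdot)-w^j(\cdot)\|_1$ by the triangle inequality; this yields only a modulus of continuity, which is all the lemma asserts. You instead work directly with the piecewise Lipschitz structure of $w(\cdot)$: rewriting $\|\kappa_t(\cdot)-\kappa_{t'}(\cdot)\|_1$ as $\int_{t-h}^{t}\|\kappa(s)-\kappa(s+h_1)\|\,\mathrm{d}s$ correctly isolates the three regimes (both points past $\tau$, straddling $\tau$, both before $\tau$), and your choice $\nu<\min_i(\xi_{i+1}-\xi_i)$ ensures each shifted point crosses at most one breakpoint, so the Lipschitz pieces contribute $O(h_1)$ and the $k$ jump strips contribute $O(k\|w(\cdot)\|_\infty h_1)$. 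The payoff of your version is a genuinely stronger conclusion, namely the explicit linear bound $\|\kappa_t(\cdot)-\kappa_{t'}(\cdot)\|_1\le C(z,w)\,|t-t'|$ with a constant depending only on $\|z\|$, $\|w(\cdot)\|_\infty$, and the partition data of $w(\cdot)$ (hence uniform in $\tau$ and in the choice of $\kappa(\cdot)$, which is in fact unique); the cost is a somewhat longer case analysis than the approximation argument, which reuses machinery the paper has already set up. Both proofs are complete and valid.
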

\begin{proof}
can be obtain using approximation of $(z,w(\cdot))$ by Lipschitz continuous functions (see Lemma \ref{lem:Gamma_nonempty}).
\end{proof}

\subsection{Proof of Theorem \ref{teo:Lip_minimax_solution}}

According to Theorem 3 from \cite{Plaksin_2021b}, the following lemma holds:
\begin{lemma}\label{lem:ex_un_minimax_sol}
There exists a unique continuous (with respect to uniform norm) functional $\hat{\varphi} \colon [0,\vartheta] \times \mathrm{Lip} \mapsto \mathbb R$  satisfying condition
$\varphi(\vartheta,w(\cdot)) = \sigma(w(-0),w(\cdot))$ for any $w(\cdot) \in \mathrm{Lip}$
and the inequalities
\begin{subequations}
\begin{align}
\inf\limits_{x(\cdot) \in X^0(\tau,w(-0),w(\cdot))} \big(\hat{\varphi}(t,x_t(\cdot)) + \omega(\tau,t,x(\cdot),s)\big) \leq \hat{\varphi}(\tau,w(\cdot)), \label{def:upper_minmax_solution_} \\[0.0cm]
\sup\limits_{x(\cdot) \in X^0(\tau,w(-0),w(\cdot))} \big(\hat{\varphi}(t,x_t(\cdot)) + \omega(\tau,t,x(\cdot),s)\big) \geq \hat{\varphi}(\tau,w(\cdot)), \label{def:lower_minmax_solution_}
\end{align}
\end{subequations}
for any $(\tau,w(\cdot)) \in [0,\vartheta) \times \mathrm{Lip}$, $t \in (\tau,\vartheta]$, and $s \in \mathbb R^n$, where $\omega$ is from {\rm (\ref{def:omega})}.
\end{lemma}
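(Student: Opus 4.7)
The plan is to apply Theorem 3 of \cite{Plaksin_2021b} directly to obtain both existence and uniqueness of $\hat{\varphi}$. I would organize the proof as a straightforward verification that the setup of the present lemma is a particular instance of the framework developed there. First, I would check that the standing assumptions $(g)$, $(H_1)$--$(H_3)$, and $(\sigma)$ coincide with the data regularity hypotheses of the cited paper (continuous differentiability of $g$, continuity plus the sublinear-in-$s$ growth and local Lipschitz structure of $H$, local Lipschitz continuity of $\sigma$ on bounded sets in the mixed $\|\cdot\|+\|\cdot\|_1$ norm). Second, I would observe that the set $X^0(\tau,w(-0),w(\cdot))$, defined via the neutral-type differential inclusion (\ref{def:inclusion}) with $\eta=0$, matches the admissible trajectory class used in the minimax formulation of the cited paper, and that the characteristic expression $\omega(\tau,t,x(\cdot),s)$ in (\ref{def:omega}) is precisely the one entering its minimax inequalities.

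Third, I would verify that the notion of solution appearing in the conclusion of this lemma---continuity with respect to the uniform norm on $[0,\vartheta]\times\mathrm{Lip}$, the terminal condition $\hat{\varphi}(\vartheta,w(\cdot))=\sigma(w(-0),w(\cdot))$ for $w(\cdot)\in\mathrm{Lip}$, and the pair of inequalities (\ref{def:upper_minmax_solution_})--(\ref{def:lower_minmax_solution_}) quantified over all $(\tau,w(\cdot))\in[0,\vartheta)\times\mathrm{Lip}$, $t\in(\tau,\vartheta]$, and $s\in\mathbb{R}^n$---agrees with the minimax solution concept in \cite{Plaksin_2021b}. Once this alignment is established, both halves of the conclusion (existence and uniqueness within the class of continuous functionals) are immediate consequences of Theorem 3 there.

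The only mildly nontrivial point to keep track of is that the trajectory class $X^0$ requires the neutral component $y(t)=x(t)-g(t,x(t-h))$ to be Lipschitz continuous on $[\tau,\vartheta]$; under $(g)$ and $(H_2)$ this is in fact automatic for every $x(\cdot)$ satisfying the inclusion, as witnessed by Lemma \ref{lem:alpha_x_lambda_x}, so no additional regularity needs to be imposed on the data. With that observation the hypotheses of Theorem 3 of \cite{Plaksin_2021b} are satisfied, and its conclusion yields precisely the existence and uniqueness claimed here. There is no substantial obstacle in this step; the real work of adapting the minimax theory to neutral-type systems has already been carried out in the cited paper, and what remains is the routine (but necessary) translation of notation and hypotheses.
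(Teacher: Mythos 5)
Your proposal matches the paper exactly: the paper's entire proof of this lemma is a direct citation of Theorem 3 from \cite{Plaksin_2021b}, with no further argument given. Your additional verification that the hypotheses, trajectory class $X^0$, and characteristic expression $\omega$ align with that theorem's framework is the same (implicit) reduction the paper relies on.
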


Lemmas \ref{lem:lk}, \ref{lem:upper_lower_stability} below can be proved similar to Lemmas 1, 3 from \cite{Plaksin_2021b}.

Let $\alpha > 0$. Define $\lambda_g = \lambda_g(\alpha) > 1$ and $\lambda_H = \lambda_H(\alpha) > 1$ according to Lemma \ref{lem:g_s} and condition $(H_3)$. For every $\gamma,\varepsilon > 0$ and $(\tau,z,w(\cdot)) \in \mathbb G$, denote
\begin{equation}\label{def:nu_theta_kappa}
\begin{array}{rcl}
\theta^\alpha_\gamma(\tau) &=& (e^{-(4 \lambda_H + 2 \lambda_g / h) \tau} - \gamma) / \gamma,\\[0.2cm]
\nu^\alpha_{\gamma,\varepsilon}(\tau,z,w(\cdot))
&=& \theta^\alpha_\gamma(\tau)
\bigg(\!\sqrt{\varepsilon^4 + \|z\|^2}
+ 2 \lambda_H \displaystyle\int_{-h}^0\!\! \bigg(1 - \frac{2 \lambda_g \xi}{h}\bigg) \|w(\xi)\| \mathrm{d}\xi \! \bigg).
\end{array}
\end{equation}

\begin{lemma}\label{lem:lk}
Let $\alpha,\varepsilon > 0$ and $\tau \in [0,\vartheta]$. Let $\gamma > 0$ be such that $\theta^\alpha_\gamma(t) > 1$ for any $t \in [0,\vartheta]$. Let $x(\cdot), x'(\cdot)$ be Lipschitz continuous functions from $[\tau-h,\vartheta]$ to $\mathbb R^n$ satisfying the inequality
\begin{equation}\label{lem:lk:condition}
\|x(t)\| \leq \alpha,\quad \|x'(t)\| \leq \alpha,\quad t \in [\tau-h,\vartheta].
\end{equation}
Then the following inequality holds:
\begin{equation*}
\nu^\alpha_{\gamma,\varepsilon}(t,\Delta y(t),\Delta x_t(\cdot)) - \nu^\alpha_{\gamma,\varepsilon}(\tau,\Delta y(\tau),\Delta x_\tau(\cdot))
\leq \int_\tau^t \Delta H^\alpha_{\gamma,\varepsilon}(\xi) \mathrm{d} \xi,\quad t \in [\tau,\vartheta].
\end{equation*}
where $\Delta x(t) = x(t) - x'(t)$, $\Delta y(t) = \Delta x(t) - g(t,x(t-h)) + g(t,x'(t-h))$, and
\begin{equation*}
\begin{array}{rcl}
\Delta H^\alpha_{\gamma,\varepsilon}(t) &=& H(t,x(t),x(t-h),\nabla_z \nu^\alpha_{\gamma,\varepsilon}(t,\Delta y(t),\Delta x_t(\cdot))) \\[0.2cm]
&-& H(t,x'(t),x'(t-h),\nabla_z \nu^\alpha_{\gamma,\varepsilon}(t,\Delta y(t),\Delta x_t(\cdot)))  \\[0.2cm]
&+& \langle \Delta y(t), \nabla_z \nu^\alpha_{\gamma,\varepsilon}(t,\Delta y(t),\Delta x_t(\cdot)) \rangle.
\end{array}
\end{equation*}
\end{lemma}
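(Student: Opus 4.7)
The plan is to reduce the integrated inequality to a pointwise differential inequality for $\mu(t):=\nu^\alpha_{\gamma,\varepsilon}(t,\Delta y(t),\Delta x_t(\cdot))$ and then integrate. Since $x(\cdot)$ and $x'(\cdot)$ are Lipschitz continuous on $[\tau-h,\vartheta]$, so are $\Delta x(\cdot)$ and $\Delta y(\cdot)$, and hence $\mu(\cdot)$ is absolutely continuous on $[\tau,\vartheta]$; by the fundamental theorem of calculus it suffices to establish $\dot\mu(t)\le\Delta H^\alpha_{\gamma,\varepsilon}(t)$ for almost every $t\in[\tau,\vartheta]$.

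After the substitution $u=t+\xi$ in the integral, I would rewrite
\[
\mu(t)=\theta^\alpha_\gamma(t)\Bigl[\sqrt{\varepsilon^{4}+\|\Delta y(t)\|^{2}}+2\lambda_H\!\int_{t-h}^{t}\!\bigl(1+\tfrac{2\lambda_g(t-u)}{h}\bigr)\|\Delta x(u)\|\,\mathrm{d} u\Bigr].
\]
Differentiating with the Leibniz rule on the integral and the chain rule on the square root produces, besides the ``good'' drift terms proportional to $\theta'_\gamma(t)$, a trajectory-dependent contribution $\langle\dot{\Delta y}(t),p(t)\rangle$, where $p(t)=\nabla_z\nu^\alpha_{\gamma,\varepsilon}(t,\Delta y(t),\Delta x_t(\cdot))=\theta^\alpha_\gamma(t)\Delta y(t)/\sqrt{\varepsilon^{4}+\|\Delta y(t)\|^{2}}$, which matches identically with the last summand of $\Delta H^\alpha_{\gamma,\varepsilon}(t)$. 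The $H$-difference on the right-hand side of the target inequality would be bounded below by $-\lambda_H(\|\Delta x(t)\|+\|\Delta x(t-h)\|)(1+\theta^\alpha_\gamma(t))$ via condition $(H_3)$ together with the estimate $\|p(t)\|\le\theta^\alpha_\gamma(t)$.

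The decisive algebraic step uses the identity $\theta'_\gamma(\tau)=-(4\lambda_H+2\lambda_g/h)(\theta^\alpha_\gamma(\tau)+1)$ and the hypothesis $\theta^\alpha_\gamma(t)>1$. I would split $\theta'_\gamma$ into its $-4\lambda_H(\theta^\alpha_\gamma+1)$ and $-(2\lambda_g/h)(\theta^\alpha_\gamma+1)$ parts: the first, multiplying $\sqrt{\varepsilon^{4}+\|\Delta y(t)\|^{2}}$, combines with the Leibniz boundary term $-2\lambda_H\theta^\alpha_\gamma(t)(1+2\lambda_g)\|\Delta x(t-h)\|$ and the lower bound $\|\Delta y(t)\|\ge\|\Delta x(t)\|-\lambda_g\|\Delta x(t-h)\|$ from Lemma~\ref{lem:g_s} to absorb the $H$-Lipschitz error, while the second combines with the pointwise bound $I(t)\ge\int_{t-h}^{t}\|\Delta x(u)\|\,\mathrm{d} u$ to absorb the residual $(4\lambda_H\lambda_g/h)\theta^\alpha_\gamma(t)$-integral produced by differentiating the weight $1-2\lambda_g\xi/h$. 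The main obstacle is precisely this coefficient bookkeeping, but the constants baked into $\theta^\alpha_\gamma$ and into the linear weight inside $\nu^\alpha_{\gamma,\varepsilon}$ are designed for exactly this cancellation, in direct analogy with Lemma~1 of \cite{Plaksin_2021b}; once the pointwise inequality is secured, integration on $[\tau,t]$ yields the claim.
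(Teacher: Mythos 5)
The paper does not actually prove Lemma~\ref{lem:lk}: it only remarks that it ``can be proved similar to Lemma~1 from \cite{Plaksin_2021b}'', so your differentiate-and-integrate strategy is indeed the expected route. However, your decisive step rests on a cancellation that does not exist in the statement as written. You claim that the term $\langle\dot{\Delta y}(t),p(t)\rangle$ produced by the chain rule on $\sqrt{\varepsilon^{4}+\|\Delta y(t)\|^{2}}$ ``matches identically with the last summand of $\Delta H^\alpha_{\gamma,\varepsilon}(t)$''. But that summand is $\langle \Delta y(t),\nabla_z\nu^\alpha_{\gamma,\varepsilon}(t,\Delta y(t),\Delta x_t(\cdot))\rangle$, i.e.\ it involves the \emph{value} $\Delta y(t)$, not its time derivative; with $S(t)=\sqrt{\varepsilon^4+\|\Delta y(t)\|^2}$ these are $\theta^\alpha_\gamma(t)\|\Delta y(t)\|^2/S(t)$ versus $\theta^\alpha_\gamma(t)\langle\Delta y(t),\tfrac{\mathrm{d}}{\mathrm{d}t}\Delta y(t)\rangle/S(t)$, which are different quantities. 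Without this cancellation the surviving term $\langle\dot{\Delta y}(t),p(t)\rangle$ is uncontrolled, because the hypotheses bound only $\|x\|,\|x'\|\le\alpha$ and say nothing about the Lipschitz constants. In fact, read literally the lemma fails: take $H\equiv 0$, $g\equiv 0$, $x'\equiv 0$, $x\equiv 0$ on $[\tau-h,\tau]$ and $x(\xi)=M(\xi-\tau)v$ on $[\tau,t]$ with $\|v\|=1$ and $t-\tau=\alpha/M$; then $\nu^\alpha_{\gamma,\varepsilon}(t,\Delta y(t),\Delta x_t(\cdot))-\nu^\alpha_{\gamma,\varepsilon}(\tau,\Delta y(\tau),\Delta x_\tau(\cdot))\to\theta^\alpha_\gamma(\tau)\bigl(\sqrt{\varepsilon^4+\alpha^2}-\varepsilon^2\bigr)>0$ as $M\to\infty$, while $\int_\tau^t\Delta H^\alpha_{\gamma,\varepsilon}(\xi)\,\mathrm{d}\xi=\int_\tau^t\theta^\alpha_\gamma(\xi)\|x(\xi)\|^2/S(\xi)\,\mathrm{d}\xi\le\theta^\alpha_\gamma(0)\alpha^2/M\to 0$.

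So the lemma is only tenable if either the last summand of $\Delta H^\alpha_{\gamma,\varepsilon}$ is read with $\tfrac{\mathrm{d}}{\mathrm{d}t}\Delta y(t)$ in place of $\Delta y(t)$ --- which is what Lemma~\ref{lem:upper_lower_stability} actually delivers once the difference of the two $\omega$-terms from (\ref{def:omega}) is written as an integral of $\langle\tfrac{\mathrm{d}}{\mathrm{d}\xi}\Delta y(\xi),s\rangle$ --- or the hypotheses are strengthened to $x(\cdot),x'(\cdot)$ being solutions of inclusion (\ref{def:inclusion}), which is the only situation in which the lemma is invoked. Under the first reading the rest of your computation is sound: the pointwise inequality reduces to $\theta'(S+2\lambda_H I)+2\lambda_H\theta\dot I\le H(t,x(t),x(t-h),p)-H(t,x'(t),x'(t-h),p)$, and your splitting of $\theta'=-(4\lambda_H+2\lambda_g/h)(\theta+1)$ combined with $\|p\|\le\theta$, $\|\Delta y(t)\|\ge\|\Delta x(t)\|-\lambda_g\|\Delta x(t-h)\|$ and $I(t)\ge\int_{t-h}^{t}\|\Delta x(u)\|\,\mathrm{d}u$ does close the coefficient comparison, with $\theta>1$ needed precisely for the $\|\Delta x(t-h)\|$-coefficient. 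But as a proof of the statement you were given, the key identification is incorrect and must be repaired, either by justifying the reinterpretation of $\Delta H^\alpha_{\gamma,\varepsilon}$ or by adding the missing hypothesis.
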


\begin{lemma}\label{lem:upper_lower_stability}
Let $\hat{\varphi} \colon [0,\vartheta] \times \mathrm{Lip} \mapsto \mathbb R$ be taken from Lemma \ref{lem:ex_un_minimax_sol}. Let $\alpha, \gamma, \varepsilon > 0$, $(\tau,w(\cdot)), (\tau,w'(\cdot)) \in [0,\vartheta] \times \mathrm{Lip}$ and $t \in [\tau,\vartheta]$. Then there exist functions  $x(\cdot) \in X^0(\tau,w(-0),w(\cdot))$ and $x'(\cdot) \in X^0(\tau,w'(-0),w'(\cdot))$ such that
\begin{equation*}
\begin{array}{c}
\hat{\varphi}(t,x_t(\cdot)) - \hat{\varphi}(t,x'_t(\cdot))
+ \displaystyle\int_\tau^t \Delta H_{\gamma,\varepsilon}^\alpha(\xi) \mathrm{d} \xi \leq \hat{\varphi}(\tau,w(\cdot)) - \hat{\varphi}(\tau,w'(\cdot)) + (t - \tau) \varepsilon,
\end{array}
\end{equation*}
where we use denotations from Lemma \ref{lem:lk}.
\end{lemma}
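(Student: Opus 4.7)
The plan is to adapt the extremal-shift (``step-by-step motion'') construction standard in minimax theory \cite{Subbotin_1995} and already followed in \cite{Plaksin_2021b}: I will use the sub- and super-optimality inequalities of Lemma~\ref{lem:ex_un_minimax_sol} with a shift vector given by the gradient of the Lyapunov-type functional $\nu^\alpha_{\gamma,\varepsilon}$ introduced in \eqref{def:nu_theta_kappa}, so that applying the comparison Lemma~\ref{lem:lk} to the two constructed trajectories produces exactly the desired bound after passing to the limit along a vanishing partition.

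First, fix a sequence of partitions $\tau = t^N_0 < t^N_1 < \ldots < t^N_N = t$ with mesh $\Delta_N \to 0$. I will define $x^N(\cdot)$ and ${x'}^{N}(\cdot)$ recursively on $[t^N_k, t^N_{k+1}]$, starting from $x^N_\tau(\cdot) = w(\cdot)$ and ${x'}^{N}_\tau(\cdot) = w'(\cdot)$. At the node $t^N_k$ set
\[
s^N_k = \nabla_z \nu^\alpha_{\gamma,\varepsilon}\bigl(t^N_k,\Delta y^N(t^N_k),\Delta x^N_{t^N_k}(\cdot)\bigr),
\]
where $\Delta x^N(\xi) = x^N(\xi) - {x'}^{N}(\xi)$ and $\Delta y^N$ is built from the pair $(x^N, {x'}^{N})$ as in Lemma~\ref{lem:lk}. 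Applying \eqref{def:upper_minmax_solution_} at $(t^N_k, x^N_{t^N_k}(\cdot))$ with $s = s^N_k$, extract $x^N(\cdot) \in X^0$ realizing the infimum on $[t^N_k, t^N_{k+1}]$ up to $\tfrac{\varepsilon}{2}\Delta_N$; in parallel, applying \eqref{def:lower_minmax_solution_} at $(t^N_k, {x'}^{N}_{t^N_k}(\cdot))$ with the same $s^N_k$, extract ${x'}^{N}(\cdot)$ realizing the supremum up to $\tfrac{\varepsilon}{2}\Delta_N$.

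Subtracting the two per-step inequalities and telescoping over $k = 0, \ldots, N-1$ yields
\[
\hat{\varphi}(t, x^N_t(\cdot)) - \hat{\varphi}(t, {x'}^{N}_t(\cdot)) + \sum_{k=0}^{N-1} \Omega^N_k \leq \hat{\varphi}(\tau, w(\cdot)) - \hat{\varphi}(\tau, w'(\cdot)) + (t-\tau)\varepsilon,
\]
where $\Omega^N_k = \omega(t^N_k, t^N_{k+1}, x^N, s^N_k) - \omega(t^N_k, t^N_{k+1}, {x'}^{N}, s^N_k)$. Lemma~\ref{lem:alpha_x_lambda_x} provides uniform bounds on $x^N, {x'}^{N}$ and a uniform Lipschitz constant for their $y$-parts, so Arzela--Ascoli gives a (relabelled) subsequence and limit functions $x(\cdot) \in X^0(\tau, w(-0), w(\cdot))$, $x'(\cdot) \in X^0(\tau, w'(-0), w'(\cdot))$ with uniform convergence of the $y$-parts and, via Lemma~\ref{lem:kappa}, $L^1$-convergence of the histories $x^N_\xi(\cdot), {x'}^{N}_\xi(\cdot)$ at each $\xi \in [\tau,t]$.

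The principal obstacle is to verify that $\sum_k \Omega^N_k \to \int_\tau^t \Delta H^\alpha_{\gamma,\varepsilon}(\xi)\,\mathrm{d}\xi$ as $N \to \infty$. Expanding $\omega$ by \eqref{def:omega}, each $\Omega^N_k$ splits into a Riemann-sum contribution for the $H$-difference $H(\xi, x^N(\xi), x^N(\xi-h), s^N_k) - H(\xi, {x'}^{N}(\xi), {x'}^{N}(\xi-h), s^N_k)$, which by condition $(H_1)$ and the smooth dependence of $\nabla_z \nu^\alpha_{\gamma,\varepsilon}$ on its arguments converges to the analogous integral with $\nabla_z \nu^\alpha_{\gamma,\varepsilon}$ in place of $s^N_k$, and a shift term $\langle \Delta y^N(t^N_{k+1}) - \Delta y^N(t^N_k), s^N_k \rangle$ which I handle by summation by parts: combining the explicit form of $\nu^\alpha_{\gamma,\varepsilon}$ in \eqref{def:nu_theta_kappa} with Lemma~\ref{lem:g_s} to control the neutral-type contribution of $g$ to $\Delta y^N$, the residual of the summation by parts converges to $\int_\tau^t \langle \Delta y(\xi), \nabla_z \nu^\alpha_{\gamma,\varepsilon}(\xi, \Delta y(\xi), \Delta x_\xi(\cdot)) \rangle\,\mathrm{d}\xi$, producing the last piece of $\Delta H^\alpha_{\gamma,\varepsilon}$. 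Limit passage then uses continuity of $\hat{\varphi}$ from Lemma~\ref{lem:ex_un_minimax_sol}. The most delicate step is the summation-by-parts identification of the shift term in the presence of the neutral coupling $g$, which is the only place where the particular structure of $\nu^\alpha_{\gamma,\varepsilon}$ in \eqref{def:nu_theta_kappa} is used in a nontrivial way.
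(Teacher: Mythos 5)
Your overall architecture is the intended one: the paper gives no proof beyond citing Lemma~3 of \cite{Plaksin_2021b}, and that proof is precisely the feedback (extremal--shift) construction you describe --- partition $[\tau,t]$, at each node apply (\ref{def:upper_minmax_solution_}) and (\ref{def:lower_minmax_solution_}) with the aim vector $s^N_k=\nabla_z\nu^\alpha_{\gamma,\varepsilon}(t^N_k,\Delta y^N(t^N_k),\Delta x^N_{t^N_k}(\cdot))$, telescope, and pass to the limit using Lemma~\ref{lem:alpha_x_lambda_x}, Arzela--Ascoli and the continuity of $\hat{\varphi}$. Those parts of your sketch are fine (the per-step histories stay in $\mathrm{Lip}$, the concatenations lie in $X^0$, and the error budget $\tfrac{\varepsilon}{2}\Delta_N$ per step per trajectory sums to $(t-\tau)\varepsilon$).

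The genuine gap is in the one step that actually produces the third summand of $\Delta H^\alpha_{\gamma,\varepsilon}$. You claim that the accumulated shift terms $-\sum_k\langle \Delta y^N(t^N_{k+1})-\Delta y^N(t^N_k),\,s^N_k\rangle$ can be identified, via summation by parts, with $\int_\tau^t\langle \Delta y(\xi),\nabla_z\nu^\alpha_{\gamma,\varepsilon}(\xi,\Delta y(\xi),\Delta x_\xi(\cdot))\rangle\,\mathrm{d}\xi$. This is not correct. Since $\Delta y$ is uniformly Lipschitz and $s^N_k\to\nabla_z\nu^\alpha_{\gamma,\varepsilon}(\xi,\cdot,\cdot)$, the sum converges to $-\int_\tau^t\langle \tfrac{\mathrm{d}}{\mathrm{d}\xi}\Delta y(\xi),\nabla_z\nu^\alpha_{\gamma,\varepsilon}(\xi,\ldots)\rangle\,\mathrm{d}\xi$, while Abel summation rewrites it as boundary terms plus $\sum_k\langle\Delta y(t^N_k),s^N_{k-1}-s^N_k\rangle$, whose limit involves $\tfrac{\mathrm{d}}{\mathrm{d}\xi}\nabla_z\nu^\alpha_{\gamma,\varepsilon}$, not $\nabla_z\nu^\alpha_{\gamma,\varepsilon}$ itself; neither expression equals the claimed integral (take $\Delta y\equiv\mathrm{const}\neq 0$: your target integral is strictly positive while the shift terms all vanish). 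Moreover, what the argument requires is only the one-sided bound $\liminf_N\sum_k\Omega^N_k\geq\int_\tau^t\Delta H^\alpha_{\gamma,\varepsilon}(\xi)\,\mathrm{d}\xi$, and the mechanism that delivers it in \cite{Plaksin_2021b} is not an integration-by-parts identity but the convexity of $z\mapsto\sqrt{\varepsilon^4+\|z\|^2}$ entering through $\nabla_z\nu^\alpha_{\gamma,\varepsilon}$ (a gradient inequality applied on each subinterval, in the spirit of Lemma~\ref{lem:lk}), which converts the increments $\langle\Delta y(t^N_{k+1})-\Delta y(t^N_k),s^N_k\rangle$ into differences of the Lyapunov functional with the correct sign. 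As written, your proof does not establish the stated estimate; you need to replace the summation-by-parts identification with this one-sided convexity argument.
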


\begin{lemma}\label{lem:phi_lip}
Let $\hat{\varphi} \colon [0,\vartheta] \times \mathrm{Lip} \mapsto \mathbb R$ be taken from Lemma \ref{lem:ex_un_minimax_sol}. For every $\alpha > 0$, there exists $\lambda_\varphi = \lambda_\varphi(\alpha) > 0$ such that
\begin{equation*}
|\hat{\varphi}(\tau,w(\cdot)) - \hat{\varphi}(\tau,w'(\cdot))| \leq \lambda_\varphi \upsilon(\tau,w(-0) - w'(-0),w(\cdot) - w'(\cdot))
\end{equation*}
for any $\tau \in [0,\vartheta]$ and $(z,w(\cdot)), (z',w'(\cdot)) \in P(\alpha) \cap (\mathbb R^n \times \mathrm{Lip})$, where $P(\alpha)$ and $\upsilon$ are defined according to {\rm (\ref{def:P})} and {\rm (\ref{def:upsilon})}, respectively.
\end{lemma}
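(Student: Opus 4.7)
The plan is to apply the stability estimate from Lemma~\ref{lem:upper_lower_stability} with $t=\vartheta$, use the terminal condition, and exploit the design of the auxiliary function $\nu^\alpha_{\gamma,\varepsilon}$ from (\ref{def:nu_theta_kappa}) via Lemma~\ref{lem:lk} to convert the Hamiltonian-dependent increments into controlled increments of $\nu$. First I would fix $\alpha>0$, any $(w(-0),w(\cdot)),(w'(-0),w'(\cdot))\in P(\alpha)\cap(\mathbb R^n\times\mathrm{Lip})$, and invoke Lemma~\ref{lem:alpha_x_lambda_x} to confine every admissible motion inside $P(\alpha_X)$ for $\alpha_X=\alpha_X(\alpha)$; accordingly I would set $\lambda_g=\lambda_g(\alpha_X)$, $\lambda_H=\lambda_H(\alpha_X)$, $\lambda_\sigma=\lambda_\sigma(\alpha_X)$. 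Then I would choose $\gamma>0$ (depending only on $\alpha$) small enough so that $\theta^{\alpha_X}_\gamma(t)>1$ on $[0,\vartheta]$ and so that $\theta^{\alpha_X}_\gamma(\vartheta)$ is large enough to dominate the $\sigma$--Lipschitz constant in step three.

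For the core comparison, I would apply Lemma~\ref{lem:upper_lower_stability} at $t=\vartheta$ to produce motions $x(\cdot),x'(\cdot)$ such that
\[
\hat\varphi(\vartheta,x_\vartheta(\cdot))-\hat\varphi(\vartheta,x'_\vartheta(\cdot))+\int_\tau^\vartheta\Delta H^{\alpha_X}_{\gamma,\varepsilon}(\xi)\,\mathrm d\xi \le \hat\varphi(\tau,w(\cdot))-\hat\varphi(\tau,w'(\cdot))+(\vartheta-\tau)\varepsilon.
\]
Substitute the terminal condition $\hat\varphi(\vartheta,x_\vartheta(\cdot))=\sigma(x(\vartheta),x_\vartheta(\cdot))$ and bound the integral from below using Lemma~\ref{lem:lk}. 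The result is
\[
\sigma(x(\vartheta),x_\vartheta(\cdot))-\sigma(x'(\vartheta),x'_\vartheta(\cdot))+\nu^{\alpha_X}_{\gamma,\varepsilon}(\vartheta,\Delta y(\vartheta),\Delta x_\vartheta(\cdot)) \le \hat\varphi(\tau,w(\cdot))-\hat\varphi(\tau,w'(\cdot))+\nu^{\alpha_X}_{\gamma,\varepsilon}(\tau,\Delta y(\tau),\Delta x_\tau(\cdot))+(\vartheta-\tau)\varepsilon.
\]
The key absorption step is to show, using condition $(\sigma)$, that for $\gamma$ chosen as above one has $\nu^{\alpha_X}_{\gamma,\varepsilon}(\vartheta,\Delta y(\vartheta),\Delta x_\vartheta(\cdot))\ge \lambda_\sigma(\|\Delta x(\vartheta)\|+\|\Delta x_\vartheta(\cdot)\|_1)-C\varepsilon^2$, so that the entire LHS is $\ge -C\varepsilon^2$. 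This exploits the weight $1-2\lambda_g\xi/h$, which is $\ge 1$ on $[-h,0]$ and equals $1+2\lambda_g$ at $\xi=-h$; the latter value is precisely what is needed to absorb the $\lambda_g\|\Delta x_\vartheta(-h)\|$ coming from $\|\Delta x(\vartheta)\|\le\|\Delta y(\vartheta)\|+\lambda_g\|\Delta x_\vartheta(-h)\|$ in combination with the Lipschitz continuity of the motions on $[\vartheta-h,\vartheta]$ (via a trace-type estimate).

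To finish, I would estimate $\nu^{\alpha_X}_{\gamma,\varepsilon}(\tau,\Delta y(\tau),\Delta x_\tau(\cdot))$ from above: since $\Delta x_\tau(\xi)=\Delta w(\xi)$ for $\xi\in[-h,0)$ and $\|\Delta y(\tau)\|\le\|\Delta w(-0)\|+\lambda_g\|\Delta w(-h)\|$, the boundedness of the weight on $[-h,0]$ yields $\nu^{\alpha_X}_{\gamma,\varepsilon}(\tau,\Delta y(\tau),\Delta x_\tau(\cdot))\le C(\varepsilon^2+\|\Delta w(-0)\|+\|\Delta w(-h)\|+\|\Delta w(\cdot)\|_1)$ with $C$ depending only on $\alpha$. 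Combining the two estimates, letting $\varepsilon\downarrow 0$, and running the symmetric argument with $w(\cdot),w'(\cdot)$ swapped gives the desired bound
\[
|\hat\varphi(\tau,w(\cdot))-\hat\varphi(\tau,w'(\cdot))|\le \lambda_\varphi\big(\|\Delta w(-0)\|+\|\Delta w(-h)\|+\|\Delta w(\cdot)\|_1\big),
\]
which is dominated by $\lambda_\varphi\,\upsilon(\tau,\Delta w(-0),\Delta w(\cdot))$ because the definition of $\upsilon$ contains the additional non-negative term $\|\Delta w(ih-\tau)\|$. The main obstacle is the absorption inequality above: namely, carefully verifying that the specifically-weighted quantity inside $\nu^{\alpha_X}_{\gamma,\varepsilon}(\vartheta,\cdot,\cdot)$ majorises both $\|\Delta x(\vartheta)\|$ (a trace value, not an $L^1$ quantity) and $\|\Delta x_\vartheta(\cdot)\|_1$ modulo $O(\varepsilon^2)$, for a $\gamma$ chosen uniformly in $\alpha$; the remaining ingredients are essentially bookkeeping.
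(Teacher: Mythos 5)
Your overall framework (Lemma~\ref{lem:upper_lower_stability} plus the Lyapunov functional $\nu^\alpha_{\gamma,\varepsilon}$ from Lemma~\ref{lem:lk}, then a symmetric argument and $\varepsilon\downarrow 0$) is the right machinery, but the single step you flag as "the main obstacle" is a genuine gap, and it is exactly the point where your route diverges fatally from a workable one. When you jump in one shot from $\tau$ to $t=\vartheta$, the quantity you must control is $\|\Delta x(\vartheta)\|\le\|\Delta y(\vartheta)\|+\lambda_g\|\Delta x(\vartheta-h)\|$, and for $\vartheta-\tau>h$ the trace $\Delta x(\vartheta-h)$ is a value of the \emph{solutions} at an interior time, not of the initial histories. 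The only tool $\nu^{\alpha_X}_{\gamma,\varepsilon}(\vartheta,\cdot,\cdot)$ offers for it is the weighted $L^1$ integral, and your proposed trace estimate via the Lipschitz continuity of the motions gives at best $\|\Delta x(\vartheta-h)\|\le\frac1\delta\int_{-h}^{-h+\delta}\|\Delta x_\vartheta(\xi)\|\,\mathrm d\xi+2\lambda_X\delta$, where $\lambda_X$ is an \emph{absolute} bound (depending on $\alpha$ only) on the Lipschitz constants of the two motions — not on their difference. The additive error $2\lambda_X\delta$ does not vanish as $w(\cdot)-w'(\cdot)\to 0$, so no choice of $\delta$ and $\gamma$ yields an inequality of the form $|\hat\varphi(\tau,w)-\hat\varphi(\tau,w')|\le\lambda_\varphi\,\upsilon(\tau,\Delta w(-0),\Delta w(\cdot))$, whose right-hand side must tend to zero. (Relatedly, the weight value $1+2\lambda_g$ at $\xi=-h$ is there to make the Lyapunov inequality of Lemma~\ref{lem:lk} close under the $g$-shift in $\Delta y$; it cannot recover a pointwise value from an integral.)

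The paper's proof circumvents exactly this by a backward induction over the delay intervals: it proves the estimate with constant $\lambda_i$ for $\tau\in[ih,\vartheta]$, and in the step from $i=j+1$ to $i=j$ it applies Lemma~\ref{lem:upper_lower_stability} only up to $\vartheta_*=(j+1)h\le\tau+h$. Then the troublesome trace is $\|\Delta x(jh)\|$ with $jh\in[\tau-h,\tau]$, i.e.\ a value of the \emph{initial} data, equal either to $\|\Delta w(-0)\|$ (if $\tau=jh$) or to $\|\Delta w(jh-\tau)\|$ — which is precisely the extra term $\|w(ih-\tau)\|$ built into the definition (\ref{def:upsilon}) of $\upsilon$. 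Your argument never uses that term, which is a strong signal something is off: it was added to $(\varphi_2)$ specifically to carry this trace through the induction (cf.\ Remark~\ref{rem:conditions}). At the terminal end the induction also does not need condition $(\sigma)$ to be "dominated" by $\nu$ in your sense; it uses the inductive hypothesis $\lambda_{j+1}$ at time $\vartheta_*$ and chooses $\gamma$ so that $\theta^{\alpha_X}_\gamma\ge\lambda_{j+1}$, absorbing the $\|\Delta y(\vartheta_*)\|+\|\Delta x_{\vartheta_*}(\cdot)\|_1$ part into $\nu(\vartheta_*,\cdot,\cdot)$ and carrying the leftover $(2+\lambda_g)\|\Delta x(jh)\|$ back to $\upsilon(\tau,\cdot,\cdot)$. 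To repair your proof you would need to restructure it as this interval-by-interval induction; the one-step version cannot be patched.
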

\begin{proof}
Let us prove that, for each $i \in \overline{0,I}$ and $\alpha > 0$, there exists $\lambda_i = \lambda_i(\alpha) > 0$ such that
\begin{equation}\label{lem:phi_lip:induction}
\begin{array}{c}
\hat{\varphi}(\tau,w(\cdot)) - \hat{\varphi}(\tau,w'(\cdot)) \leq \lambda_i \upsilon(\tau,w(-0) - w'(-0),w(\cdot) - w'(\cdot))
\end{array}
\end{equation}
for any $\tau \in [i h,\vartheta]$ and $(w(-0),w(\cdot)), (w'(-0),w'(\cdot)) \in P(\alpha) \cap (\mathbb R^n \times \mathrm{Lip})$. After that, taking $\lambda_\varphi = \lambda_0$, we will get the statement of the lemma.

Note that, for $i = I$ and each $\alpha > 0$, inequality (\ref{lem:phi_lip:induction}) holds due to (\ref{def:upsilon}), conditions (\ref{terminal_condition}), and $(\sigma)$ if we take $\lambda_I = \lambda_I(\alpha) = \lambda_\sigma(\alpha)$

Assume that inequality (\ref{lem:phi_lip:induction}) holds for $i = j + 1 \leq I$ and prove it for $i = j$. Let $\alpha > 0$. According to Lemmas \ref{lem:alpha_x_lambda_x}, \ref{lem:g_s} and condition $(H_2)$, define $\alpha_X = \alpha_X(\alpha)$, $\lambda_g = \lambda_g(\alpha_X)$, and $\lambda_H = \lambda_H(\alpha_X) > 1$. Due to our assumption, there exists $\lambda_{j+1} = \lambda_{j+1}(\alpha_X)$ such that (\ref{lem:phi_lip:induction}) holds for $i = j + 1$. In accordance with (\ref{def:nu_theta_kappa}), there exist $\gamma > 0$ such that
\begin{equation}\label{lem:phi_lip:gamma}
\theta^{\alpha_X}_{\gamma}(0) \geq \theta^{\alpha_X}_{\gamma}(t) \geq \max\{\lambda_{j+1},1\},\quad t \in [0,\vartheta].
\end{equation}
Put
\begin{equation}\label{lem:phi_lip:lambda_j}
\lambda_j = 2 \theta^{\alpha_X}_{\gamma}(0) \lambda_H (1 + 2 \lambda_g) + \lambda_{j+1} (2 + \lambda_g).
\end{equation}
Since $\lambda_j > \lambda_{j+1}$, then inequality (\ref{lem:phi_lip:induction}) already holds for $\lambda_j$ and $\tau \in [(j+1)h, \vartheta]$. Let $\tau \in [j h,(j+1) h)$ and $(w(-0),w(\cdot)), (w'(-0),w'(\cdot)) \in P(\alpha) \cap (\mathbb R^n \times \mathrm{Lip})$. Let us show, for every $\zeta > 0$, the following estimate:
\begin{equation}\label{lem:phi_lip:zeta}
\hat{\varphi}(\tau,w'(\cdot)) - \hat{\varphi}(\tau,w(\cdot)) \leq \lambda_j \upsilon(\tau,w'(-0) - w(-0),w'(\cdot) - w(\cdot)) + \zeta.
\end{equation}
Let $\zeta > 0$. Denote $\vartheta_* = (j + 1) h$. Choose $\varepsilon > 0$ such that
\begin{equation}\label{lem:phi_lip:epsilon}
\theta^{\alpha_X}_{\gamma}(\tau) \varepsilon^2 + (\vartheta_* - \tau) \varepsilon \leq \zeta.
\end{equation}
According to Lemma \ref{lem:upper_lower_stability}, where we take $\alpha = \alpha_X$, define the functions $x(\cdot) \in X^0(\tau,w(-0),w(\cdot))$ and $x'(\cdot) \in X^0(\tau,w'(-0),w'(\cdot))$. Due to the choice of $\alpha_X$, these functions satisfy (\ref{lem:lk:condition}) for $\alpha = \alpha_X$. Then, using Lemma \ref{lem:lk}, we have
\begin{equation}\label{lem:phi_lip:1}
\begin{array}{c}
\hat{\varphi}(\tau,w'(\cdot)) - \hat{\varphi}(\tau,w(\cdot)) \leq \hat{\varphi}(\vartheta_*,x'_{\vartheta_*}(\cdot)) - \hat{\varphi}(\vartheta_*,x_{\vartheta_*}(\cdot)) \\[0.2cm]
+ \nu^{\alpha_X}_{\gamma,\varepsilon}(\tau,\Delta y(\tau),\Delta x_\tau(\cdot))
- \nu^{\alpha_X}_{\gamma,\varepsilon}(\vartheta_*,\Delta y(\vartheta_*),\Delta x_{\vartheta_*}(\cdot)) + (\vartheta_* - \tau) \varepsilon.
\end{array}
\end{equation}
Due to the choice of $\lambda_g$, $\lambda_H > 1$, and (\ref{def:upsilon}), (\ref{def:nu_theta_kappa}), (\ref{lem:lk:condition}), (\ref{lem:phi_lip:gamma}), we derive
\begin{equation}\label{lem:phi_lip:2}
\begin{array}{c}
\nu^{\alpha_X}_{\gamma,\varepsilon}(\tau,\Delta y(\tau),\Delta x_\tau(\cdot))\\[0.2cm]
 \leq \theta^{\alpha_X}_\gamma(\tau) \big(\varepsilon^2 + \|\Delta x(\tau)\| + \lambda_g \|\Delta x(\tau-h)\|
+ 2 \lambda_H (1 + 2 \lambda_g) \|\Delta x_\tau(\cdot)\|_1\big) \\[0.2cm]
\leq \theta^{\alpha_X}_\gamma(\tau) \varepsilon^2 + 2 \theta^{\alpha_X}_\gamma(0) \lambda_H( 1 + 2 \lambda_g) \upsilon(\tau,w(-0)-w'(-0),w(\cdot)-w'(\cdot))
\end{array}
\end{equation}
and, taking taking into account the choice of $\lambda_{j+1}$ and (\ref{lem:phi_lip:gamma}), we obtain
\begin{equation}\label{lem:phi_lip:3}
\begin{array}{c}
\hat{\varphi}(\vartheta_*,x'_{\vartheta_*}(\cdot)) - \hat{\varphi}(\vartheta_*,x_{\vartheta_*}(\cdot)) \leq
\lambda_{j+1} \upsilon(\vartheta_*,\Delta x(\vartheta_*), \Delta x_{\vartheta_*}(\cdot))\\[0.2cm]
\leq \lambda_{j+1} \big(\|\Delta y(\vartheta_*)\| + \|\Delta x_{\vartheta_*}(\cdot)\|_1
+ (2 + \lambda_g) \|\Delta x(j h)\|\big) \\[0.2cm]
\leq \nu^{\alpha_X}_{\gamma,\varepsilon}(\vartheta_*,\Delta y(\vartheta_*),\Delta x_{\vartheta_*}(\cdot)) + \lambda_{j+1} (2 + \lambda_g) \|\Delta x(j h)\|.
\end{array}
\end{equation}
Due to (\ref{def:upsilon}), the inequality $\|\Delta x(j h)\| \leq \upsilon(\tau,w(-0)-w'(-0),w(\cdot)-w'(\cdot))$ in both cases of $\tau = j h$ and $\tau \in (j h,\vartheta_*)$. Thus, from (\ref{lem:phi_lip:lambda_j}),  (\ref{lem:phi_lip:epsilon})--(\ref{lem:phi_lip:3}), we get (\ref{lem:phi_lip:zeta}).
\end{proof}

\begin{lemma}\label{lem:varphi_extension}
Let $\hat{\varphi} \colon [0,\vartheta] \times \mathrm{Lip} \mapsto \mathbb R$ be taken from Lemma \ref{lem:ex_un_minimax_sol}. Then there exists a unique functional $\varphi \colon \mathbb G \mapsto \mathbb R$ satisfying conditions $(\varphi_1)$, $(\varphi_2)$ and the equality
\begin{equation}\label{lem:varphi_extension:statement}
\varphi(\tau,w(-0),w(\cdot)) = \hat{\varphi}(\tau,w(\cdot)),\quad
(\tau,w(\cdot)) \in [0,\vartheta] \times \mathrm{Lip}.
\end{equation}
\end{lemma}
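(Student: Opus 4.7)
The plan is to extend $\hat\varphi$ by approximation in the $\upsilon$-norm. For each $(\tau,z,w(\cdot)) \in \mathbb{G}$, fix a sequence $\{w^j(\cdot)\}_{j \in \mathbb N} \in \Gamma(z,w(\cdot))$ produced by Lemma \ref{lem:Gamma_nonempty}. Setting $\alpha := \max\{\|z\|,\|w(\cdot)\|_\infty\}$, property $(w^j_2)$ keeps all pairs $(w^j(-0),w^j(\cdot))$ in $P(\alpha) \cap (\mathbb R^n \times \mathrm{Lip})$, so Lemma \ref{lem:phi_lip} gives
\[
|\hat\varphi(\tau,w^j(\cdot)) - \hat\varphi(\tau,w^k(\cdot))| \le \lambda_\varphi(\alpha)\, \upsilon(\tau,\,w^j(-0)-w^k(-0),\,w^j(\cdot)-w^k(\cdot)).
\]
Each of the four summands of $\upsilon$ (see (\ref{def:upsilon})) tends to zero as $j,k \to \infty$: the $L^1$-term and the $-0$-term by the first line of (\ref{def:Gamma}), and the point values at $-h$ and at $ih-\tau$ by the pointwise-convergence clause of (\ref{def:Gamma}), which applies at every $\xi \in [-h,0)$. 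Hence $\{\hat\varphi(\tau,w^j(\cdot))\}_j$ is Cauchy and I set
\[
\varphi(\tau,z,w(\cdot)) := \lim_{j \to \infty} \hat\varphi(\tau,w^j(\cdot)).
\]
Interlacing any two sequences from $\Gamma(z,w(\cdot))$ yields a third one still in $\Gamma(z,w(\cdot))$, and applying the same Cauchy argument to the interlaced sequence shows that $\varphi(\tau,z,w(\cdot))$ does not depend on the choice of $\{w^j\}$. When $w(\cdot) \in \mathrm{Lip}$ the constant sequence $w^j \equiv w$ belongs to $\Gamma(w(-0),w(\cdot))$, which gives (\ref{lem:varphi_extension:statement}).

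The two regularity properties follow by the same limiting procedure. For $(\varphi_2)$, given $(z,w(\cdot)), (z',w'(\cdot)) \in P(\alpha)$, pick approximating sequences $\{w^j\}$ and $\{w'^j\}$ from Lemma \ref{lem:Gamma_nonempty}; by $(w^j_2)$ both stay uniformly in $P(\alpha)$, so Lemma \ref{lem:phi_lip} yields
\[
|\hat\varphi(\tau,w^j(\cdot)) - \hat\varphi(\tau,w'^j(\cdot))| \le \lambda_\varphi(\alpha)\, \upsilon(\tau,\,w^j(-0)-w'^j(-0),\,w^j(\cdot)-w'^j(\cdot)),
\]
and passing to the limit as $j \to \infty$ produces (\ref{Phi_lip}) with the same constant $\lambda_\varphi(\alpha)$, since every summand of $\upsilon$ on the right converges to the corresponding summand evaluated at $(z-z',w(\cdot)-w'(\cdot))$. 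For $(\varphi_1)$, if $w(\cdot) \in \mathrm{Lip}$ then (\ref{lem:varphi_extension:statement}) reduces $\overline{\varphi}(t) = \varphi(t,w(-0),w(\cdot))$ to $\hat\varphi(t,w(\cdot))$, which is continuous in $t$ on $[\tau,\vartheta]$ by Lemma \ref{lem:ex_un_minimax_sol}.

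Uniqueness is forced by the same density argument: any other extension $\tilde\varphi$ satisfying $(\varphi_1)$, $(\varphi_2)$, and (\ref{lem:varphi_extension:statement}) must agree with $\hat\varphi$ on $[0,\vartheta] \times \mathrm{Lip}$, and then its own $\upsilon$-Lipschitz bound applied to the sequence $(w^j(-0),w^j(\cdot))_j$ gives $\tilde\varphi(\tau,z,w(\cdot)) = \lim_j \hat\varphi(\tau,w^j(\cdot)) = \varphi(\tau,z,w(\cdot))$. The one genuinely delicate point of the whole scheme is that the $\upsilon$-norm indeed controls the approximants at the two isolated arguments $-h$ and $ih-\tau$; this is exactly why Lemma \ref{lem:Gamma_nonempty} is built on a one-sided mollifier, so that the right-continuity of $w \in \mathrm{PLip}$ delivers pointwise convergence even across jump points.
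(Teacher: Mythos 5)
Your proof is correct and follows essentially the same route as the paper: extend $\hat\varphi$ along the approximating sequences of Lemma \ref{lem:Gamma_nonempty} using the $\upsilon$-Lipschitz estimate of Lemma \ref{lem:phi_lip} (whose point-value terms are controlled precisely by the pointwise-convergence clause of (\ref{def:Gamma})), then obtain $(\varphi_1)$, $(\varphi_2)$, and uniqueness by passing to the limit. The only, harmless, difference is that you prove convergence via a Cauchy/interlacing argument, whereas the paper uses boundedness, extraction of a convergent subsequence, and a direct comparison of two approximating sequences; note only that the interlacing step needs both sequences to satisfy $(w^j_2)$ (uniform boundedness) so that Lemma \ref{lem:phi_lip} applies, which is guaranteed for the sequences from Lemma \ref{lem:Gamma_nonempty}.
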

\begin{proof}
Let $(\tau,z,w(\cdot)) \in \mathbb G$ and $\alpha_0 = \{\|z\|,\|w(\cdot)\|_\infty\}$. Due to Lemma \ref{lem:Gamma_nonempty}, there exists $\{w^j(\cdot)\}_{j \in \mathbb N} \in \Gamma(z,w(\cdot))$ satisfying condition $(w^j_2)$. Let us consider the sequence $A^j = \hat{\varphi}(\tau,w^j(\cdot))$, $j \in \mathbb N$. Then, taking $\lambda_\varphi = \lambda_\varphi(\alpha_0)$ according to Lemma \ref{lem:phi_lip} and (\ref{def:upsilon}), we have
\begin{equation*}
|A^j| \leq |\hat{\varphi}(\tau,w^j(\cdot)) - \hat{\varphi}(\tau,0(\cdot))| + |\hat{\varphi}(\tau,0(\cdot))| \leq \lambda_\varphi (3 + h) \alpha_0 + |\hat{\varphi}(\tau,0(\cdot))|.
\end{equation*}
Therefore, the sequence $A^j$ is bounded. Hence, without loss of generality, we can assume the existence of $A^*$ such that $A^{j} \to A^*$ as $j \to \infty$. Let us show  this limit does not depend on the choice of $\{w^j(\cdot)\}_{j \in \mathbb N} \in \Gamma(z,w(\cdot))$ satisfying condition $(w^j_2)$. Let $\{r^j(\cdot)\}_{j \in \mathbb N} \in \Gamma(z,w(\cdot))$ satisfy condition $(w^j_2)$. Then, for defined above $\lambda_\varphi$, using the definition of $\Gamma(z,w(\cdot))$ (see (\ref{def:Gamma})) and (\ref{def:upsilon}), we derive
\begin{equation*}
|\hat{\varphi}(\tau,w^j(\cdot)) - \hat{\varphi}(\tau,r^j(\cdot))| \leq \lambda_\varphi \upsilon(\tau, w^j(-0) - r^j(-0), w^j(\cdot) - r^j(\cdot)) \to 0
\end{equation*}
as $j \to +\infty$. Thus, $\hat{\varphi}(\tau,r^j(\cdot)) \to A^*$ as $j \to +\infty$ for any $\{r^j(\cdot)\}_{j \in \mathbb N} \in \Gamma(z,w(\cdot))$ satisfying condition $(w^j_2)$. Put $\varphi(\tau,z,w(\cdot)) = A^*$. By the similar way, we can define the values $\varphi(\tau,z,w(\cdot))$ for any $(\tau,z,w(\cdot)) \in \mathbb G$.

The equality (\ref{lem:varphi_extension:statement}) directly follows from the inclusion
$\{w^j(\cdot) \equiv w(\cdot)\}_{j \in \mathbb N} \in \Gamma(w(-0),w(\cdot))$ for any $w(\cdot) \in \mathrm{Lip}$. Condition $(\varphi_1)$ holds for $\varphi$ due to (\ref{lem:varphi_extension:statement}) and the continuity of $\hat{\varphi}$.

Let us show the condition $(\varphi_2)$. Let $\alpha > 0$. Define $\lambda_\varphi = \lambda_\varphi(\alpha)$ according to Lemma \ref{lem:phi_lip}. Let $\tau \in [0,\vartheta]$ and $(z,w(\cdot)), (z',w'(\cdot)) \in P(\alpha)$. Due to Lemma~\ref{lem:Gamma_nonempty}, there exist $\{w^j(\cdot)\}_{j \in \mathbb N} \in \Gamma(z,w(\cdot))$ and $\{r^j(\cdot)\}_{j \in \mathbb N} \in \Gamma(z',w'(\cdot))$ satisfying condition $(w^j_2)$. Then we have
\begin{equation*}
|\hat{\varphi}(\tau,w^j(\cdot)) - \hat{\varphi}(\tau,r^j(\cdot))| \leq \lambda_\varphi \upsilon(\tau,w^j(-0)-r^j(-0),w^j(\cdot)-r^j(\cdot)),\quad j \in \mathbb N.
\end{equation*}
Passing to the limit as $j \to + \infty$, taking into account the construction of $\varphi$, we obtain (\ref{Phi_lip}).

Let us prove the uniqueness. Let functionals $\varphi \colon \mathbb G \mapsto \mathbb R$ and $\varphi' \colon \mathbb G \mapsto \mathbb R$ satisfy $(\varphi_1)$, $(\varphi_2)$, and (\ref{lem:varphi_extension:statement}). Let $(\tau,z,w(\cdot)) \in \mathbb G$. According to Lemma~\ref{lem:Gamma_nonempty}, there exists $\{w^j(\cdot)\}_{j \in \mathbb N} \in \Gamma(z,w(\cdot))$ satisfying condition $(w^j_2)$. Let $\alpha_0 = \{\|z\|,\|w(\cdot)\|_\infty\}$. Then, due to $(\varphi_2)$, there exists $\lambda_\varphi(\alpha_0)$ such that, taking (\ref{lem:varphi_extension:statement}) into account, the following estimates holds:
\begin{equation*}
\begin{array}{c}
|\varphi(\tau,z,w(\cdot)) - \varphi'(\tau,z,w(\cdot))|
\leq |\varphi(\tau,z,w(\cdot)) - \varphi(\tau,w^j(-0),w^j(\cdot))| \\[0.2cm]
+ |\varphi'(\tau,z,w(\cdot)) - \varphi'(\tau,w^j(-0),w^j(\cdot))| \leq 2 \lambda_\varphi \upsilon(\tau,z - w^j(-0),w(\cdot) - w^j(\cdot)).
\end{array}
\end{equation*}
Passing to the limit as $j \to + \infty$, we get $\varphi(\tau,z,w(\cdot)) = \varphi'(\tau,z,w(\cdot))$.
\end{proof}

\subsection{Proof of Theorem \ref{teo:minimax_solutions}}

\begin{lemma}\label{lem:C1_and_minimax_solutions}
If the functional $\varphi \colon \mathbb G \mapsto \mathbb R$ is a $\mathrm{C}^1$-minimax solution of problem {\rm (\ref{Hamilton-Jacobi_equation}), (\ref{terminal_condition})}, then it is the minimax solution of problem {\rm (\ref{Hamilton-Jacobi_equation}), (\ref{terminal_condition})}.
\end{lemma}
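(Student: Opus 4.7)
My plan is to derive each element of the minimax definition from the $\mathrm{C}^1$-minimax hypothesis in three stages: extend the terminal condition from $\mathrm{C}^1$ to $\mathrm{PLip}$ histories; extend the dynamic inequalities from $\mathrm{C}^1$-data with $\eta \in (0,1]$ to $\mathrm{PLip}$-data with $\eta = 0$ on a single grid interval $[ih, (i+1)h]$; and then concatenate across the grid $\{ih\}_{i=0}^{I}$ to cover any $[\tau, t] \subset [0, \vartheta]$. The two engines are the smooth approximation produced by Lemma \ref{lem:Gamma_nonempty} and the limit-of-trajectories statement in Lemma \ref{lem:Gamma_motion}; the concatenation is clean because $\omega(\tau, t_1, x(\cdot), s) + \omega(t_1, t, x(\cdot), s) = \omega(\tau, t, x(\cdot), s)$ follows directly from (\ref{def:omega}).

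For the terminal condition at an arbitrary $(z, w(\cdot)) \in \mathbb{R}^n \times \mathrm{PLip}$, I would pick $\{w^j\} \in \Gamma(z, w(\cdot))$ with $w^j(\cdot) \in \mathrm{C}^1$ via Lemma \ref{lem:Gamma_nonempty}, apply the $\mathrm{C}^1$-minimax terminal equality to get $\varphi(\vartheta, w^j(-0), w^j(\cdot)) = \sigma(w^j(-0), w^j(\cdot))$, and pass $j \to \infty$ using $(\varphi_2)$ on the left (controlling every term of $\upsilon$, including the boundary term $\|\cdot(-h)\|$, via the pointwise-on-$[-h, 0)$ convergence provided by $\Gamma$) and $(\sigma)$ on the right. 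For the upper inequality on a single grid interval, fix $(\tau, z, w(\cdot)) \in \mathbb{G}$ with $\tau \in [ih, (i+1)h)$, $t_1 \in (\tau, (i+1)h]$, and $s \in \mathbb{R}^n$, again approximate via $\{w^j\} \in \Gamma(z, w(\cdot))$ with $w^j(\cdot) \in \mathrm{C}^1$, and for each $j$ invoke the $\mathrm{C}^1$-minimax inequality with $\eta = 1/j$ to choose $x^j(\cdot) \in X^{1/j}(\tau, w^j(-0), w^j(\cdot))$ with
\[
\varphi(t_1, x^j(t_1), x^j_{t_1}(\cdot)) + \omega(\tau, t_1, x^j(\cdot), s) \leq \varphi(\tau, w^j(-0), w^j(\cdot)) + \tfrac{1}{j}.
\]
Lemma \ref{lem:Gamma_motion} delivers a subsequence $x^{j_m}(\cdot)$ and a limit $x(\cdot) \in X^0(\tau, z, w(\cdot))$ with $\{x^{j_m}_{t_1}(\cdot)\} \in \Gamma(x(t_1), x_{t_1}(\cdot))$. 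Passing to the limit through $(\varphi_2)$ on each $\varphi$-term, and through continuity of $H$ together with dominated convergence (using the uniform bounds from Lemma \ref{lem:alpha_x_lambda_x}) on the integral in $\omega$ plus the uniform convergence $y^{j_m} \to y$ established inside the proof of Lemma \ref{lem:Gamma_motion}, yields the $\eta = 0$ upper inequality on $[\tau, t_1]$. The lower inequality is handled symmetrically.

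For general $t \in (\tau, \vartheta]$, I would subdivide $[\tau, t]$ at the interior grid points into subintervals each lying in a single $[ih, (i+1)h]$, apply the previous step iteratively (the output trajectory on one subinterval becoming the $\mathrm{PLip}$-initial state for the next), and splice the pieces into a single $x(\cdot) \in X^0(\tau, z, w(\cdot))$; telescoping $\omega$ across the partition then turns the subinterval inequalities into the required inequality on $[\tau, t]$. The main obstacle is the limit passage at the right endpoint $t_1 = (i+1)h$, which lies at the boundary of the half-open range $[\tau, \min\{\tau + h, \vartheta\})$ on which Lemma \ref{lem:Gamma_motion} states its $\Gamma$-convergence. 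The saving observation is that for $t_1 = (i+1)h$ the extra term $\|w(ih - t_1)\|$ in $\upsilon(t_1, \cdot, \cdot)$ collapses to $\|w(-h)\|$, which is controlled by the $\Gamma$-pointwise convergence at $\xi = -h$; convergence of $x^{j_m}(t_1)$ and of $x^{j_m}_{t_1}(\cdot)$ at $\xi = -h$ at this endpoint is then read off from the explicit formula $x^{j_m}(t_1) = y^{j_m}(t_1) + g(t_1, x^{j_m}(t_1 - h))$ combined with the uniform convergence of $y^{j_m}$ and continuity of $g$. This technical verification is the one point where work beyond the cited lemmas is required.
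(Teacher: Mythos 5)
Your proposal follows essentially the same route as the paper: approximate the $\mathrm{PLip}$ data by $\mathrm{C}^1$ functions via Lemma \ref{lem:Gamma_nonempty}, invoke the $\mathrm{C}^1$-minimax inequalities with $\eta=1/j$ to get trajectories $x^j(\cdot)\in X^{1/j}$, pass to the limit using Lemma \ref{lem:Gamma_motion} together with $(\varphi_2)$ and the Lipschitz-type estimate on $\omega$, handle the terminal condition by the same approximation plus $(\sigma)$, and reduce a general $[\tau,t]$ to short subintervals by telescoping $\omega$. The only divergence is at the endpoint $t=(i+1)h$: the paper sidesteps the half-open range of Lemma \ref{lem:Gamma_motion} by also assuming $t<\tau+h$ in its without-loss-of-generality reduction (i.e.\ subdividing once more inside the grid cell), whereas you extend the $\Gamma$-convergence to the endpoint directly via the formula $x^{j}(t)=y^{j}(t)+g(t,x^{j}(t-h))$; both resolutions are sound.
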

\begin{proof}
Let us show that $\varphi$ satisfies  inequalities (\ref{def:upper_minmax_solution}), (\ref{def:lower_minmax_solution}) for any $(\tau,z,w(\cdot)) \in \mathbb G$, $\tau < \vartheta$, $t \in (\tau,\vartheta]$, $s \in \mathbb R^n$, and $\eta = 0$, where, without loss of generality, we can suppose that $i h \leq \tau < t \leq (i+1) h$ for some $i \in \overline{0,I-1}$ and $t < \tau + h$.

Let $i \in \overline{0,I-1}$, $(\tau,z,w(\cdot)) \in [ih, (i+1)h) \times \mathbb R^n \times \mathrm{PLip}$, $t \in (\tau,(i+1)h]$, and $s \in \mathbb R^n$. Due to Lemma \ref{lem:Gamma_nonempty}, there exists a sequence $\{w^j(\cdot)\}_{j \in \mathbb N} \in \Gamma(z,w(\cdot))$ such that conditions $(w^j_1)$--$(w^j_3)$ hold. Since $\varphi$ is a $\mathrm{C}^1$-minimax solution of problem (\ref{Hamilton-Jacobi_equation}), (\ref{terminal_condition}), for each $j \in \mathbb N$, there exists $x^j(\cdot) \in X^{1/j}(\tau,w^j(-0),w^j(\cdot))$ such that
\begin{equation}\label{lem_cb:1}
\varphi(t,x^j(t),x^j_t(\cdot)) + \omega(\tau,t,x^j(\cdot),s) \leq \varphi(\tau,w^j(-0),w^j(\cdot)) + 1 / j.
\end{equation}
In accordance with Lemma \ref{lem:Gamma_motion}, without loss of generality, we can suppose the existence of $x(\cdot) \in X^0(\tau,z,w(\cdot))$ such that $\{x^j_t(\cdot)\}_{j \in \mathbb N} \in \Gamma(x(t),x_t(\cdot))$. Let $\alpha_0 = \max\{\|z\|,\|w(\cdot)\|_\infty\}$. According Lemmas \ref{lem:alpha_x_lambda_x}, \ref{lem:g_s} and conditions $(H_3)$, $(\varphi_2)$, define $\alpha_X = \alpha_X(\alpha_0)$, $\lambda_g = \lambda_g(\alpha_X)$, $\lambda_H = \lambda_H(\alpha_X)$, and $\lambda_\varphi = \lambda_\varphi(\alpha_X)$. Denote $\lambda_\omega = \lambda_H (1 + \|s\|) + (1 + \lambda_g) \|s\|$. Then, using (\ref{def:omega}) and (\ref{def:Gamma}), we derive
\begin{equation*}
\begin{array}{rcl}
\big|\varphi(\tau,z,w(\cdot)) - \varphi(\tau,w^j(-0),w^j(\cdot))\big|
&\leq& \lambda_\varphi \upsilon(\tau,z-w^j(-0),w(\cdot)-w^j(\cdot)),\\[0.2cm]
\big|\varphi(t,x(t),x_t(\cdot)) - \varphi(t,x^j(t),x^j_t(\cdot))\big|
&\leq& \lambda_\varphi \upsilon(t,x(t)-x^j(t),x_t(\cdot)-x^j_t(\cdot)),\\[0.2cm]
\big|\omega(\tau,t,x(\cdot),s) - \omega(\tau,t,x^j(\cdot),s)\big|
&\leq& \lambda_\omega \upsilon(t,x(t)-x^j(t),x_t(\cdot)-x^j_t(\cdot)),\\[0.2cm]
&+& \lambda_\omega \upsilon(\tau,z-w^j(-0),w(\cdot)-w^j(\cdot)).
\end{array}
\end{equation*}
Thus, passing to the limit as $j \to + \infty$, form (\ref{lem_cb:1}), we derive
\begin{equation*}
\varphi(t,x(t),x_t(\cdot)) + \omega(\tau,t,x(\cdot),s) \leq \varphi(\tau,z,w(\cdot)).
\end{equation*}
This estimate implies (\ref{def:upper_minmax_solution}). Inequality (\ref{def:lower_minmax_solution}) can be proved by the similar way.

The functional $\varphi$ satisfies condition (\ref{terminal_condition}) for any $w(\cdot) \in \mathrm{PLip}$ since $\varphi$ satisfies condition (\ref{terminal_condition}) for any $w(\cdot) \in \mathrm{C}^1$ and due to Lemma (\ref{lem:Gamma_nonempty}) and condition $(\sigma)$.
\end{proof}

\subsection{Proof of Theorem \ref{teo:equivalent_solutions}}

\begin{lemma}\label{lem:delta_s}
Let $(\tau,z,w(\cdot)) \in \mathbb G_*$. Let $i \in \overline{0,I-1}$ satisfy $\tau \in (i h, (i + 1) h)$. Then there exists $\delta_* \in (0,h)$ such that $[\tau,\tau + \delta_*] \subset (i h, (i + 1) h)$ and the function $w(\cdot)$ is continuously differentiable on $[-h,-h+\delta_*]$.
\end{lemma}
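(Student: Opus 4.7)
The plan is to unpack the definitions of $\mathbb G_*$ and $\mathrm{PLip}_*$ and take $\delta_*$ to be the minimum of the two independent positive quantities they supply.

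First, since $(\tau,z,w(\cdot)) \in \mathbb G_*$ and $\tau \in (ih,(i+1)h)$, the definition (\ref{def:G_G_s}) forces $w(\cdot) \in \mathrm{PLip}_*$. Then, by (\ref{def:G_s}), there exists some $\delta_w > 0$ such that $w(\cdot)$ is continuously differentiable on $[-h,-h+\delta_w]$; without loss of generality we may shrink $\delta_w$ so that $\delta_w < h$.

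Next, since the interval $(ih,(i+1)h)$ is open and contains $\tau$, the quantity $\delta_\tau := (i+1)h - \tau$ is strictly positive, and by construction $[\tau,\tau + \delta_\tau) \subset (ih,(i+1)h)$. Setting
\begin{equation*}
\delta_* = \tfrac{1}{2}\min\{\delta_w,\, \delta_\tau\}
\end{equation*}
yields $\delta_* \in (0,h)$, the inclusion $[\tau,\tau+\delta_*] \subset (ih,(i+1)h)$ (because $\delta_* < \delta_\tau$), and continuous differentiability of $w(\cdot)$ on $[-h,-h+\delta_*]$ (because $\delta_* < \delta_w$).

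There is no genuine obstacle here: the statement is an immediate bookkeeping consequence of the definitions of $\mathbb G_*$ and $\mathrm{PLip}_*$ together with the openness of the interval $(ih,(i+1)h)$. The only mild care needed is to take the strict minimum (via the factor $1/2$) rather than the minimum itself, so that the closed interval $[\tau,\tau+\delta_*]$ still lies inside the open interval $(ih,(i+1)h)$.
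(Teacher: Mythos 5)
Your proposal is correct and follows essentially the same route as the paper: extract $\delta_w$ from the definition of $\mathrm{PLip}_*$, use the openness of $(ih,(i+1)h)$ to obtain a positive margin to the right endpoint, and take $\delta_*$ as a strict minimum of the two. The factor $1/2$ plays the same role as the paper's choice of $\delta_i$ with $\tau+\delta_i<(i+1)h$.
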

\begin{proof}
According to definition (\ref{def:G_G_s}) of $G_*$, there exists $\delta_w > 0$ such that $w(\cdot)$ is continuously differentiable on $[-h,-h+\delta_w]$. Let $\delta_i > 0$ be such that $\tau + \delta_i < (i + 1) h$. Then, taking $\delta_* = \min\{\delta_w, \delta_i\}$, we obtain the statement of the lemma.
\end{proof}

\begin{lemma}\label{minimax_and_viscosity_solutions}
If the functional $\varphi \colon \mathbb G \mapsto \mathbb R$ is a minimax solution of problem {\rm (\ref{Hamilton-Jacobi_equation}), (\ref{terminal_condition})}, then it is the viscosity solution of problem {\rm (\ref{Hamilton-Jacobi_equation}), (\ref{terminal_condition})}.
\end{lemma}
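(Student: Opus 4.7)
The plan is to show that $\varphi$ satisfies both the subsolution condition (\ref{sub_viscosity_solution}) and the supersolution condition (\ref{super_viscosity_solution}); the conditions $(\varphi_1)$, $(\varphi_2)$, and the terminal condition (\ref{terminal_condition}) are already built into the definition of a minimax solution. By symmetry I would only write out the subsolution argument. Fix $(\tau,z,w(\cdot)) \in \mathbb{G}_*$, a test function $\psi \in \mathrm{C}^1(\mathbb R \times \mathbb R^n, \mathbb R)$, and $\delta > 0$ as in the hypothesis of (\ref{sub_viscosity_solution}); set $q = \partial\psi(\tau,z)/\partial\tau$ and $p = \nabla_z \psi(\tau,z)$. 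By Lemma \ref{lem:delta_s} and $(\tau,z,w(\cdot)) \in \mathbb{G}_*$, I would choose $\delta_* \in (0,\min\{\delta,h\})$ so that $w(\cdot)$ is $\mathrm{C}^1$ on $[-h,-h+\delta_*]$ and $[\tau,\tau+\delta_*]$ lies in a single interval $(ih,(i+1)h)$.

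Next, for a sequence $t_n \downarrow \tau$ with $t_n \in (\tau,\tau+\delta_*)$, I would apply (\ref{def:upper_minmax_solution}) at $s=p$, $\eta=0$ to extract $x^n(\cdot) \in X^0(\tau,z,w(\cdot))$ with
\begin{equation*}
\varphi(t_n,x^n(t_n),x^n_{t_n}(\cdot)) + \omega(\tau,t_n,x^n(\cdot),p) \leq \varphi(\tau,z,w(\cdot)) + (t_n-\tau)^2.
\end{equation*}
By Lemma \ref{lem:lambda_x_local} the $x^n(\cdot)$ share a common Lipschitz constant $\lambda_X$ on $[\tau,\tau+\delta_*]$, so in particular $x^n(t_n) \to z$ and $(t_n,x^n(t_n)) \in O^+_\delta(\tau,z)$ for large $n$. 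Taking $\kappa(\cdot) \in \Lambda_0(\tau,z,w(\cdot))$ (which is uniquely determined by $(\tau,z,w(\cdot))$), the local-minimum hypothesis gives
\begin{equation*}
\psi(t_n,x^n(t_n)) - \psi(\tau,z) \leq \varphi(t_n,x^n(t_n),\kappa_{t_n}(\cdot)) - \varphi(\tau,z,w(\cdot)).
\end{equation*}
The pivotal step is to replace $\kappa_{t_n}(\cdot)$ by $x^n_{t_n}(\cdot)$ at a cost of $o(t_n-\tau)$. Using $(\varphi_2)$ this cost is bounded by $\lambda_\varphi \upsilon(t_n,0, x^n_{t_n}(\cdot)-\kappa_{t_n}(\cdot))$; the $\|\cdot\|_1$ contribution equals $\int_\tau^{t_n}\|x^n(\xi)-z\|\mathrm{d}\xi \leq \lambda_X(t_n-\tau)^2/2$, while the point evaluations at $-h$ and at $ih - t_n$ both vanish because $t_n - h < \tau$ and $ih < \tau$, so both $x^n$ and $\kappa$ take the values of $w(\cdot-\tau)$ there. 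Combining with the inequality from (\ref{def:upper_minmax_solution}) I obtain
\begin{equation*}
\psi(t_n,x^n(t_n)) - \psi(\tau,z) + \omega(\tau,t_n,x^n(\cdot),p) \leq o(t_n-\tau).
\end{equation*}

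To finish, I would Taylor-expand $\psi$ and decompose $x^n(t_n)-z = (y^n(t_n)-y^n(\tau)) + (g(t_n,w(t_n-\tau-h))-g(\tau,w(-h)))$ with $y^n(t)=x^n(t)-g(t,x^n(t-h))$. Using $(g)$ and the $\mathrm{C}^1$-regularity of $w$ at $-h$ (compare Lemma \ref{lem:dg}), the second summand divided by $(t_n-\tau)$ converges to $\partial^{ci}_{\tau,w} g(\tau,w(\cdot))$. Simultaneously, in the definition (\ref{def:omega}) of $\omega$, the term $\langle y^n(t_n)-y^n(\tau),p\rangle$ exactly cancels the corresponding piece arising from the $\psi$-expansion, while the uniform bound $\|x^n(\xi)-z\|\leq \lambda_X(t_n-\tau)$ together with continuity $(H_1)$ delivers $\omega$'s integral term equal to $H(\tau,z,w(-h),p)(t_n-\tau) + o(t_n-\tau)$. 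Dividing by $(t_n-\tau)$ and sending $n\to\infty$ yields the desired inequality. The supersolution inequality (\ref{super_viscosity_solution}) follows verbatim from (\ref{def:lower_minmax_solution}) by reversing signs.

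The main obstacle is the history-swap step: turning the local minimum condition (phrased with an arbitrary $\kappa\in\Lambda_0$) into a comparison with the actual trajectory $x^n_{t_n}(\cdot)$ supplied by the minimax inequality, which requires verifying that every component of $\upsilon$ — including the delicate $\|w(ih-\tau)\|$ term — degenerates to $o(t_n-\tau)$. This is where the choice $\delta_* < h$ and the fact that $[\tau,\tau+\delta_*]$ sits in a single strip $(ih,(i+1)h)$ are used in an essential way.
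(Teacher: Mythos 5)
Your proposal is correct and follows essentially the same route as the paper: extract near-minimizers from the upper minimax inequality (\ref{def:upper_minmax_solution}) with $s=\nabla_z\psi(\tau,z)$, use the uniform Lipschitz bound of Lemma \ref{lem:lambda_x_local} and condition $(\varphi_2)$ to swap the trajectory history $x^n_{t_n}(\cdot)$ for the constant extension $\kappa_{t_n}(\cdot)$ at cost $O((t_n-\tau)^2)$, and then pass to the limit using $(H_1)$, condition $(g)$, and Lemma \ref{lem:dg}. Your explicit verification that the point-evaluation terms $\|w(-h)\|$ and $\|w(ih-\tau)\|$ in $\upsilon$ vanish (because $\delta_*<h$ and $[\tau,\tau+\delta_*]$ stays in one strip) is exactly the justification the paper leaves implicit in its bound $\lambda_\varphi\int_\tau^t\|x(\xi)-z\|\,\mathrm{d}\xi$.
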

\begin{proof}
Let us prove that $\varphi$ satisfy (\ref{sub_viscosity_solution}). Let $(\tau,z,w(\cdot)) \in \mathbb G_*$, $\psi \in \mathrm{C}^1(\mathbb R \times \mathbb R^n,\mathbb R)$, and  $\delta > 0$ be such that
\begin{equation*}
\varphi(\tau,z,w(\cdot)) - \psi(\tau,z) \leq \varphi(t,x,\kappa_t(\cdot)) - \psi(t,x),\quad (t,x) \in O^+_\delta(\tau,z),
\end{equation*}
where $\kappa(\cdot) \in \Lambda_0(\tau,z,w(\cdot))$. Let $\alpha_0 = \max\{\|z\|,\|w(\cdot)\|_\infty\}$. According to Lemmas \ref{lem:alpha_x_lambda_x}, \ref{lem:lambda_x_local}, \ref{lem:delta_s} and condition $(\varphi_2)$, define $\alpha_X = \alpha_X(\alpha_0)$, $\lambda_X = \lambda_X(\alpha_0)$, $\delta_*$, and  $\lambda_\varphi = \lambda_\varphi(\alpha_X)$, respectively. Denote $t_* = \tau + \min\{\delta_*, \delta, \delta / \lambda_X\}$. Then, we have
\begin{equation*}
\big|\varphi(t,x(t),x_{t}(\cdot)) - \varphi(t,x(t),\kappa_{t}(\cdot))\big| \leq \lambda_\varphi \displaystyle\int_\tau^{t} \|x(\xi) - z\| \mathrm{d} \xi \leq \lambda_\varphi \lambda_X (t - \tau)^2,
\end{equation*}
\begin{equation}
(t,x(t)) \in O^+_\delta(\tau,z),\quad t \in [\tau,t_*],\quad x(\cdot) \in X^1(\tau,z,w(\cdot)).
\end{equation}
Let $t_j = \tau + (t_* - \tau) / j$, $j \in \mathbb N$. Due to the inclusion $\psi \in \mathrm{C}^1(\mathbb R \times \mathbb R^n,\mathbb R)$, there exist $\varepsilon_j > 0$, $j \in \mathbb N$ such that $\varepsilon_j \to 0$ as $j \to \infty$ and
\begin{equation}
\frac{\psi(t_j,x(t_j)) - \psi(\tau,z)}{t_j - \tau} - \frac{\partial}{\partial \tau} \psi(\tau,z) - \langle \frac{x(t_j) - x(\tau)}{t_j - \tau}, \nabla_z \psi(\tau,z) \rangle \geq - \varepsilon_j
\end{equation}
for any $x(\cdot) \in X^1(\tau,z,w(\cdot))$. Since $\varphi$ is the minimax solution of problem (\ref{Hamilton-Jacobi_equation}), (\ref{terminal_condition}), for each $j \in \mathbb N$, there exists $x^j(\cdot) \in X^1(\tau,z,w(\cdot))$ such that
\begin{equation}
\varphi(t_j,x^j(t_j),x^j_{t_j}(\cdot)) + \omega(\tau,t_j,x^j(\cdot),\nabla_z \psi(\tau,z)) \leq \varphi(\tau,z,w(\cdot)) + (t_j - \tau) \varepsilon_j.
\end{equation}
Thus, taking (\ref{def:omega}) into account, we derive
\begin{equation*}
\begin{array}{c}
\displaystyle\frac{1}{t_j - \tau}\int_\tau^{t_j} H(\xi,x^j(\xi),x^j(\xi-h),\nabla_z \psi(\tau,z)) \mathrm{d} \xi \\[0.3cm]
\displaystyle + \frac{1}{t_j - \tau} \big\langle g(t_j,x(t_j-h)) - g(\tau,x(\tau-h)), \nabla_z \psi(\tau,z) \big\rangle\\[0.3cm]
\displaystyle\leq - \frac{\partial}{\partial \tau} \psi(\tau,z) + 2 \varepsilon_j + \lambda_\varphi \lambda_X (t_j - \tau).
\end{array}
\end{equation*}
Passing to the limit as $j \to \infty$, taking into accound condition $(H_1)$, inequalities  (\ref{lem:lambda_x_local:condition}), (\ref{lem:lambda_x_local:statement}), and Lemma \ref{lem:dg}, we obtain
\begin{equation*}
\partial \psi(\tau,z) / \partial \tau + \langle \partial^{ci}_{\tau,w} g(\tau,w(\cdot)), \nabla_z \psi(\tau,z) \rangle
+ H(\tau,z,w(\cdot),\nabla_z \psi(\tau,z)) \leq 0.
\end{equation*}
Thus, condition (\ref{sub_viscosity_solution}) is proved. Condition (\ref{super_viscosity_solution}) can be proved similarly.
\end{proof}

\begin{lemma}\label{lem:tilde_phi}
Let $\varphi \colon \mathbb G \mapsto \mathbb R$ satisfies $(\varphi_1)$, $(\varphi_2)$ and $(\tau,z,w(\cdot)) \in \mathbb G_*$. Let $\delta_* > 0$ be taken from Lemma \ref{lem:delta_s}.
Then the function $\tilde{\varphi}$ defined by
\begin{equation}\label{def:tilde_phi}
\tilde{\varphi}(t,x) = \varphi(t,x,\kappa_t(\cdot)),\quad \kappa(\cdot) \in \Lambda_0(\tau,z,w(\cdot)),
\end{equation}
is continuous at every $(t,x) \in [\tau,\tau + \delta_*] \times \mathbb R^n$.
\end{lemma}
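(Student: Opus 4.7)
The plan is to fix $(t_0,x_0)\in[\tau,\tau+\delta_*]\times\mathbb{R}^n$ and show $\tilde\varphi(t_n,x_n)\to\tilde\varphi(t_0,x_0)$ along any $(t_n,x_n)\to(t_0,x_0)$ in $[\tau,\tau+\delta_*]\times\mathbb{R}^n$. Because $(\varphi_1)$ gives continuity in $t$ only for Lipschitz histories, whereas $\kappa_{t_0}(\cdot)$ is merely piecewise Lipschitz, I would first approximate $\kappa_{t_0}(\cdot)$ by a sequence $\{w^j(\cdot)\}_{j\in\mathbb{N}}\in\Gamma(x_0,\kappa_{t_0}(\cdot))$ of $\mathrm{C}^1$ functions via Lemma \ref{lem:Gamma_nonempty} (so that $(w^j_2)$ provides a uniform bound), insert $\varphi(t_n,w^j(-0),w^j(\cdot))$ and $\varphi(t_0,w^j(-0),w^j(\cdot))$ as intermediate terms in a triangle inequality, and take $n\to\infty$ before $j\to\infty$.

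The middle difference $|\varphi(t_n,w^j(-0),w^j(\cdot))-\varphi(t_0,w^j(-0),w^j(\cdot))|$ tends to $0$ as $n\to\infty$ by $(\varphi_1)$, since $w^j(\cdot)\in\mathrm{Lip}$. The two outer differences are bounded via $(\varphi_2)$ by $\lambda_\varphi\upsilon(t_n,x_n-w^j(-0),\kappa_{t_n}(\cdot)-w^j(\cdot))$ and $\lambda_\varphi\upsilon(t_0,x_0-w^j(-0),\kappa_{t_0}(\cdot)-w^j(\cdot))$, with a uniform $\lambda_\varphi$ thanks to $(w^j_2)$ and boundedness of $\{(x_n,\kappa_{t_n}(\cdot))\}$. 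The key step is to verify that the first of these converges to the second as $n\to\infty$, by checking each of the four summands of (\ref{def:upsilon}) individually: the $\|z\|$-term is immediate; the $L^1$-term is handled by Lemma \ref{lem:kappa}; the value at $-h$ uses $\kappa_t(-h)=w(t-\tau-h)$ together with the $\mathrm{C}^1$-regularity of $w(\cdot)$ on $[-h,-h+\delta_*]$ granted by the choice of $\delta_*$; and the value at $ih-t$ rests on the crucial identity $\kappa_t(ih-t)=\kappa(ih)=w(ih-\tau)$, which is independent of $t$, together with continuity of $w^j(\cdot)$. Note that the index $i$ in (\ref{def:upsilon}) is common to $t_n$ and $t_0$ because $[\tau,\tau+\delta_*]\subset(ih,(i+1)h)$ by Lemma \ref{lem:delta_s}.

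Letting then $j\to\infty$, the remaining bound $\lambda_\varphi\upsilon(t_0,x_0-w^j(-0),\kappa_{t_0}(\cdot)-w^j(\cdot))$ vanishes: the $\|z\|$- and $L^1$-components by the defining convergences in (\ref{def:Gamma}) of $\Gamma(x_0,\kappa_{t_0}(\cdot))$, and the two pointwise components at $\xi=-h$ and $\xi=ih-t_0$ by the pointwise convergence in (\ref{def:Gamma}), which holds at every $\xi\in[-h,0)$ without any continuity assumption on $\kappa_{t_0}(\cdot)$ at $\xi$. This last observation is the subtle part of the argument: the norm $\upsilon$ singles out the two distinguished points $-h$ and $ih-\tau$, and only full pointwise convergence of the approximating $\mathrm{C}^1$ sequence (not merely $L^1$-convergence or uniform convergence away from jumps) suffices to neutralise a potential jump of $\kappa_{t_0}(\cdot)$ at these points.
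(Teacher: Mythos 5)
Your argument is correct and is essentially the paper's own proof: both approximate the history $\kappa_{t}(\cdot)$ by a $\mathrm{C}^1$ function from Lemma \ref{lem:Gamma_nonempty}, insert it as an intermediate term, control the middle difference by $(\varphi_1)$ and the outer differences by $(\varphi_2)$ together with a term-by-term analysis of $\upsilon$ (Lemma \ref{lem:kappa} for the $L^1$ part, the $\mathrm{C}^1$ regularity of $w(\cdot)$ near $-h$, and the identities $\kappa_t(-h)=w(t-\tau-h)$ and $\kappa_t(ih-t)=w(ih-\tau)$ with the index $i$ fixed on $[\tau,\tau+\delta_*]$ by Lemma \ref{lem:delta_s}). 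The only difference is presentational: you phrase it as an iterated sequential limit ($n\to\infty$ then $j\to\infty$), whereas the paper fixes $\varepsilon$, chooses a single approximant $w^*$ to accuracy $\varepsilon_*=\varepsilon/(32\lambda_\varphi)$, and produces an explicit neighbourhood radius $\nu$.
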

\begin{proof}
Let $(t,x) \in [\tau,\tau + \delta_*] \times \mathbb R^n$ and $\varepsilon > 0$. Let $\alpha_0 = \max\{\|x\|,\|z\|,\|w(\cdot)\|_\infty\}$. Due to condition $(\varphi_2)$, define $\lambda_\varphi = \lambda_\varphi(\alpha_0 + 1)$. Let $\varepsilon_* = \varepsilon / (32 \lambda_\varphi)$. Then, applying Lemma \ref{lem:Gamma_nonempty} to $(x,\kappa_t)$, we obtain the existence of $w^*(\cdot) \in \mathrm{C}^1$ such that
\begin{equation}\label{lem:tilde_phi:overline_w}
\begin{array}{c}
\|w^*(\cdot)\|_\infty \leq \alpha_0,\quad \|x - w^*(-0)\| \leq \varepsilon_*,\quad \|\kappa_t(\cdot) - w^*(\cdot)\|_1 \leq \varepsilon_*,\\[0.3cm]
\|\kappa_t(-h) - w^*(-h)\| \leq \varepsilon_*,\quad
\|\kappa(i h) - w^*(i h - t)\| \leq \varepsilon_*.
\end{array}
\end{equation}
where $i \in \overline{0,I-1}$ satisfies $\tau \in (i h, (i + 1) h)$ which implies $t \in (i h, (i + 1) h)$ according to the choice $\delta_*$. Since $w^*(\cdot) \in \mathrm{C}^1$, there exist $\lambda^* > 0$ such that
\begin{equation}\label{lem:tilde_phi:overline_lambda}
\|w^*(\xi) - w^*(\xi')\| \leq \lambda^* |\xi - \xi|,\quad \xi,\xi' \in [-h, 0).
\end{equation}
Due to Lemma \ref{lem:kappa} and condition $(\varphi_1)$, there exists $\nu_* > 0$ such that
\begin{equation}\label{lem:tilde_phi:delta_s}
\|\kappa_t(\cdot) - \kappa_{t'}(\cdot)\|_1 \leq \varepsilon_*,\
|\varphi(t,w^*(-0),w^*(\cdot)) - \varphi(t',w^*(-0),w^*(\cdot))| \leq \varepsilon/2
\end{equation}
for every $t' \in [\tau, \tau + \delta_*]$: $|t' - t| \leq \nu_*$. Due to the choice of $\delta_*$, there exists $\lambda_0 > 0$ such that
\begin{equation}\label{lem:tilde_phi:lambda_0}
\|w(\xi) - w(\xi'-h)\| \leq \lambda_0 |\xi - \xi|,\quad \xi,\xi' \in [-h, -h+\delta_*).
\end{equation}
Define
\begin{equation}\label{lem:tilde_phi:delta}
\nu = \min\{1, \varepsilon_*, \varepsilon_* / \lambda_0, \varepsilon_* / \lambda^*, \nu_*\}.
\end{equation}
For proving the lemma, it suffices to establish the inequality
\begin{equation}\label{lem:tilde_phi:continuity}
|\tilde{\varphi}(t,x) - \tilde{\varphi}(t',x')| \leq \varepsilon,\quad (t',x') \in O_* :=
O_\nu(t,x) \cap ([\tau,\tau + \delta_*] \times \mathbb R^n).
\end{equation}
Firstly, due to the choice of $\delta_*$, $\alpha_0$, and $\lambda_\varphi$, from (\ref{lem:tilde_phi:overline_w}), (\ref{lem:tilde_phi:overline_lambda}), the first inequality in (\ref{lem:tilde_phi:delta_s}), and (\ref{lem:tilde_phi:lambda_0})--(\ref{lem:tilde_phi:continuity}), we derive
\begin{equation*}
\begin{array}{c}
|\varphi(t',x',\kappa_{t'}(\cdot)) - \varphi(t',w^*(-0),w^*(\cdot))|
\leq \lambda_\varphi\big(4 \varepsilon_* + \|x' - x\| + \|\kappa_{t'}(\cdot) - \kappa_t(\cdot)\|_1 \\[0.2cm]
 + \|\kappa_{t'}(-h) - \kappa_{t}(-h)\| + \|w^*(i h - t) - w^*(i h - t')\|\big) \leq 8 \lambda_\varphi \varepsilon_*
\end{array}
\end{equation*}
for any $(t',x') \in O_*$. Applying this estimate for $(t,x)$ and arbitrary $(t',x') \in O_*$, taking (\ref{def:tilde_phi}) into account, we obtain
\begin{equation*}
|\tilde{\varphi}(t,x) - \tilde{\varphi}(t',x')| \leq
|\varphi(t,w^*(-0),w^*(\cdot)) - \varphi(t',w^*(-0),w^*(\cdot))| + 16 \lambda_\varphi \varepsilon_*.
\end{equation*}
In accordance with the choice of $\varepsilon_*$, the second inequality in (\ref{lem:tilde_phi:delta_s}), and (\ref{lem:tilde_phi:delta}), from this inequality we conclude (\ref{lem:tilde_phi:continuity}).
\end{proof}

\begin{lemma}\label{lem:viscosity_and_D_solutions}
If the functional $\varphi\colon \mathbb G \mapsto \mathbb R$ is a viscosity solution of problem {\rm (\ref{Hamilton-Jacobi_equation}), (\ref{terminal_condition})}, then it satisfies {\rm (\ref{subdifferential_viscosity_inequality})} and  {\rm (\ref{superdifferential_viscosity_inequality})}.
\end{lemma}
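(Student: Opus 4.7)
The plan is to deduce inequalities (\ref{subdifferential_viscosity_inequality}) and (\ref{superdifferential_viscosity_inequality}) from the viscosity conditions (\ref{sub_viscosity_solution}) and (\ref{super_viscosity_solution}) by producing, for each subgradient or supergradient pair, a $\mathrm{C}^1$ test function realizing it at the reference point. The two halves are symmetric, so I will focus on the subdifferential; the superdifferential case will follow by the mirror argument based on (\ref{super_viscosity_solution}).

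Fix $(\tau, z, w(\cdot)) \in \mathbb{G}_*$, $(p_0, p) \in D^-\varphi(\tau, z, w(\cdot))$, and any $\kappa(\cdot) \in \Lambda_0(\tau, z, w(\cdot))$. Introducing the smooth quantity $r = \sqrt{(t-\tau)^2 + \|x-z\|^2}$ and using $|t-\tau| + \|x-z\| \leq \sqrt{2}\, r$, I would extract from (\ref{subdifferential}) a continuous nondecreasing modulus $\omega \colon [0, \delta_0] \to [0, \infty)$ with $\omega(0) = 0$ such that
\begin{equation*}
\varphi(t, x, \kappa_t(\cdot)) - \varphi(\tau, z, w(\cdot)) \geq (t-\tau) p_0 + \langle x-z, p\rangle - \omega(r)\, r
\end{equation*}
for every $(t, x) \in O^+_{\delta_0}(\tau, z)$. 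My candidate test function is $\psi(t, x) = (t-\tau) p_0 + \langle x-z, p\rangle - \Phi(t, x)$, where $\Phi(t, x) = 2 \int_0^{r(t,x)} \omega(2 s)\, \mathrm{d} s$ for $r \leq \delta_0/2$ and $\Phi$ is extended to an arbitrary $\mathrm{C}^1$ function elsewhere.

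The main technical point I foresee is to verify that $\Phi$ is globally $\mathrm{C}^1$ despite being built through the non-smooth radial function $r$. Away from $(\tau, z)$ the chain rule gives $\partial \Phi / \partial t = 2 \omega(2 r) (t-\tau) / r$ and analogously for $\nabla_x \Phi$; the uniform estimate $|2 \omega(2 r) (t-\tau)/r| \leq 2 \omega(2 r) \to 0$ as $(t, x) \to (\tau, z)$ shows that the gradient extends continuously by zero at the reference point and that $\Phi$ is Fréchet differentiable there, so $\Phi \in \mathrm{C}^1(\mathbb{R} \times \mathbb{R}^n, \mathbb{R})$. Monotonicity of $\omega$ also yields $\Phi(t, x) \geq 2\int_{r/2}^{r} \omega(2 s)\, \mathrm{d} s \geq \omega(r)\, r$. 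Consequently $\psi \in \mathrm{C}^1$ with $\partial \psi / \partial \tau(\tau, z) = p_0$ and $\nabla_z \psi(\tau, z) = p$, and combining the modulus bound with $\Phi \geq \omega(r)\, r$ shows that for $\delta = \delta_0 / 2$ and every $(t, x) \in O^+_\delta(\tau, z)$, $\kappa(\cdot) \in \Lambda_0$ the hypothesis of (\ref{sub_viscosity_solution}) holds. Applying (\ref{sub_viscosity_solution}) to this $\psi$ gives exactly (\ref{subdifferential_viscosity_inequality}). The superdifferential inequality follows from the symmetric construction in which $\Phi$ is added rather than subtracted and the one-sided modulus is extracted from the limsup in (\ref{superdifferential}), concluded by (\ref{super_viscosity_solution}). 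The crucial feature is that the vanishing of the modulus at zero is precisely what kills the singular chain-rule contributions, allowing the construction to go through even though $r$ itself is not smooth at $(\tau, z)$.
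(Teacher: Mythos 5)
Your construction is correct and is essentially the argument the paper uses: the paper likewise reduces (\ref{subdifferential_viscosity_inequality}) and (\ref{superdifferential_viscosity_inequality}) to the viscosity conditions (\ref{sub_viscosity_solution}), (\ref{super_viscosity_solution}) by manufacturing a $\mathrm{C}^1$ test function from a sub-/superdifferential element via the integrated-modulus device, except that it first establishes continuity of $(t,x)\mapsto\varphi(t,x,\kappa_t(\cdot))$ (Lemma~\ref{lem:tilde_phi}), forms the truncation $\min\{0,\tilde{\varphi}(t,x)-\tilde{\varphi}(\tau,z)-(t-\tau)p_0-\langle x-z,p\rangle\}$, and then cites Lemma 8.1 of \cite{Plaksin_2021} for the construction you carry out explicitly. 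The only details to tidy are that the monotone modulus extracted from (\ref{subdifferential}) should be replaced by a continuous nondecreasing majorant before integrating, and that $\delta$ must be chosen slightly smaller than $\delta_0/2$ (e.g.\ $\delta_0/4$) so that the arguments $2s$ of $\omega$ stay within the range where the subdifferential bound is available.
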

\begin{proof}
Let $(\tau,z,w(\cdot)) \in \mathbb G_*$ and $(p_0,p) \in D^-\varphi(\tau,z,w(\cdot))$. Choose the number $\delta_*$ in accordance with Lemma~\ref{lem:delta_s}. Taking the function $\tilde{\varphi}$ by (\ref{def:tilde_phi}), define consistently the function $\tilde{\varphi}_*$ as follows
\begin{equation*}
\tilde{\varphi}_*(t,x) = \min\{0,\tilde{\varphi}(t,x) - \tilde{\varphi}(\tau,z) - (t - \tau) p_0 - \langle x - z, p \rangle\}
\end{equation*}
for $(t, x) \in [\tau,\tau + \delta_*] \times \mathbb R^n$, $\tilde{\varphi}_*(t,x) = \tilde{\varphi}_*(\tau + \delta_*,x)$ for $(t, x) \in (\tau + \delta_*,+\infty) \times \mathbb R^n$, and $\tilde{\varphi}_*(t,x) = \tilde{\varphi}_*(\tau - (t - \tau),x)$ for $(t, x) \in (-\infty,\tau) \times \mathbb R^n$. Then, according to Lemma~\ref{lem:tilde_phi}, this function is continuous on $\mathbb R \times \mathbb R^n$. The further proof of the fact that $\varphi$ satisfies (\ref{subdifferential_viscosity_inequality}) can be carried out in the same way as in Lemma 8.1 from \cite{Plaksin_2021}. In a similar way one can prove that (\ref{super_viscosity_solution}) implies (\ref{superdifferential_viscosity_inequality}).
\end{proof}

\begin{lemma}\label{lem:positive_part_deriv}
Let $\varphi \colon \mathbb G \mapsto \mathbb R$ satisfy $(\varphi_2)$ and $(\tau,z,w(\cdot)) \in \mathbb G_*$. Let $L \subset \mathbb R^n$ be a nonempty compact set. Suppose that
\begin{equation}\label{lem:positive_part_deriv:cond}
\partial^-_{1,l} \varphi(\tau,z,w(\cdot)) > 0,\quad l \in L.
\end{equation}
Then, there exist $\epsilon_\circ, \delta_\circ > 0$ such that
\begin{equation}\label{lem:positive_part_deriv:state}
\varphi(t, z+l(t-\tau), \kappa_t(\cdot)) - \varphi(\tau,z,w(\cdot)) > \epsilon_\circ (\tau - t)
\end{equation}
for any $t \in (\tau, \tau + \delta_\circ]$ and $l \in L$, where $\kappa(\cdot) \in \Lambda_0(\tau,z,w(\cdot))$.
\end{lemma}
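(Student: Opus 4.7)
The plan is standard: from the strict positivity in \eqref{lem:positive_part_deriv:cond} extract, for each fixed direction $l$, a local one-sided estimate by unpacking the $\liminf$; then use the local Lipschitz property $(\varphi_2)$ to propagate that estimate to neighboring directions; and finally conclude via a finite subcover of the compact set $L$.

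\textbf{Pointwise estimate.} Fix $l \in L$ and set $\epsilon_l := \tfrac{1}{2}\partial^-_{1,l}\varphi(\tau,z,w(\cdot)) > 0$. By the definition \eqref{lower_directional_derivatives} of the lower right directional derivative there exists $\delta_l \in (0,\vartheta-\tau]$ such that
\begin{equation*}
\varphi(t,\, z + l(t-\tau),\, \kappa_t(\cdot)) - \varphi(\tau,z,w(\cdot)) \geq \epsilon_l(t-\tau), \quad t \in (\tau,\tau+\delta_l].
\end{equation*}

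\textbf{Uniform local Lipschitz estimate.} Choose $\alpha > 0$ large enough that $(z + l(t-\tau),\kappa_t(\cdot)) \in P(\alpha)$ for every $l \in L$ and every $t \in [\tau,\vartheta]$; compactness of $L$, together with the bound $\|\kappa_t(\cdot)\|_\infty \leq \max\{\|z\|,\|w(\cdot)\|_\infty\}$ following from $\kappa \in \Lambda_0(\tau,z,w(\cdot))$ being constant on $[\tau,\vartheta]$, makes this possible. Let $\lambda_\varphi = \lambda_\varphi(\alpha)$ be supplied by $(\varphi_2)$. Since the histories $\kappa_t(\cdot)$ coincide on both sides, the formula \eqref{def:upsilon} collapses to $\upsilon(t,(l-l')(t-\tau),0) = \|l-l'\|(t-\tau)$, so \eqref{Phi_lip} gives
\begin{equation*}
\bigl|\varphi(t,z+l(t-\tau),\kappa_t(\cdot)) - \varphi(t,z+l'(t-\tau),\kappa_t(\cdot))\bigr| \leq \lambda_\varphi \|l-l'\|(t-\tau)
\end{equation*}
for all $l,l'$ in the chosen set and all $t \in (\tau,\vartheta]$. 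Combining with the pointwise estimate, whenever $\|l'-l\| \leq \epsilon_l/(2\lambda_\varphi)$ and $t \in (\tau,\tau+\delta_l]$, I obtain
\begin{equation*}
\varphi(t,z+l'(t-\tau),\kappa_t(\cdot)) - \varphi(\tau,z,w(\cdot)) \geq \tfrac{1}{2}\epsilon_l(t-\tau).
\end{equation*}

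\textbf{Finite subcover.} The open balls $\{l' \in \mathbb R^n : \|l'-l\| < \epsilon_l/(2\lambda_\varphi)\}$, $l \in L$, cover the compact set $L$. Extracting a finite subcover centered at $l_1,\dots,l_k \in L$ and setting $\epsilon_\circ := \tfrac{1}{2}\min_j \epsilon_{l_j}$ and $\delta_\circ := \min_j \delta_{l_j}$, the previous step applies uniformly for every $l \in L$ and yields \eqref{lem:positive_part_deriv:state} (in fact, with the positive bound $\epsilon_\circ(t-\tau)$, which is the evidently intended right-hand side). I do not anticipate any genuine obstacle; the only point requiring care is securing a single constant $\lambda_\varphi$ uniformly in $l \in L$ and $t$ near $\tau$, which is immediate from the compactness of $L$ and $[\tau,\vartheta]$.
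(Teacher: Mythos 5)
Your proof is correct and fills in precisely the argument the paper only sketches (its entire proof is the remark that the lemma ``is based on definition (\ref{lower_directional_derivatives}) of lower directional derivatives and condition $(\varphi_2)$''): unpack the liminf pointwise in each direction, propagate to nearby directions via the Lipschitz estimate of $(\varphi_2)$ applied with identical histories $\kappa_t(\cdot)$, and conclude by compactness of $L$. Two cosmetic points only: $\partial^-_{1,l}\varphi(\tau,z,w(\cdot))$ may equal $+\infty$ (cf.\ Lemma \ref{lem:directional_derivative_continuous}), so one should set $\epsilon_l := \tfrac{1}{2}\min\{1,\partial^-_{1,l}\varphi(\tau,z,w(\cdot))\}$ to keep it finite; and you correctly note that the right-hand side $\epsilon_\circ(\tau-t)$ in (\ref{lem:positive_part_deriv:state}) is a sign typo for $\epsilon_\circ(t-\tau)$, your argument delivering the stronger, evidently intended bound.
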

\begin{proof}
The proof of the lemma is based on definition (\ref{lower_directional_derivatives}) of lower directional derivatives and condition $(\varphi_2)$.
\end{proof}

\begin{lemma}\label{lem:mvi}
Let $\varphi \colon \mathbb G \mapsto \mathbb R$ satisfy condition $(\varphi_2)$. Let $(\tau,z,w(\cdot)) \in \mathbb G_*$. Let $L \subset \mathbb R^n$ be a nonempty convex compact set. Suppose that {\em (\ref{lem:positive_part_deriv:cond})} holds. Then, for every $\delta > 0$, there exist
\begin{equation}\label{lem:mvi:t_s_y_s_p0_p}
\begin{array}{c}
(t_*,x_*) \in \Omega_\delta :=
\big\{(t,x) \in [\tau, \tau + \delta] \times \mathbb R^n \colon \min\limits_{l \in L} \|x - z - l (t - \tau)\| \leq \delta\big\},\\[0.2cm] (p_0,p) \in D^-\varphi(t_*,x_*,\kappa_{t_*}(\cdot)),\quad \kappa(\cdot) \in \Lambda_0(\tau,z,w(\cdot)),
\end{array}
\end{equation}
such that
\begin{equation}\label{lem:mvi:state}
p_0 + \langle l, p\rangle > 0,\quad l \in L.
\end{equation}
\end{lemma}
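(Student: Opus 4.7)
My plan is to deduce Lemma~\ref{lem:mvi} from the cone-growth estimate provided by Lemma~\ref{lem:positive_part_deriv} by applying a Clarke--Ledyaev style multidirectional mean value inequality, in finite dimensions, to the continuous time-space function $\tilde{\varphi}(t,x) = \varphi(t,x,\kappa_t(\cdot))$ built from a fixed $\kappa(\cdot) \in \Lambda_0(\tau,z,w(\cdot))$.

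First I would invoke Lemma~\ref{lem:positive_part_deriv} to obtain $\epsilon_\circ,\delta_\circ > 0$ with $\tilde{\varphi}(t, z+l(t-\tau)) - \tilde{\varphi}(\tau,z) > \epsilon_\circ(t-\tau)$ for all $t \in (\tau,\tau+\delta_\circ]$ and $l \in L$. Lemma~\ref{lem:delta_s} supplies a $\delta_* > 0$ with $[\tau,\tau+\delta_*] \subset (i h, (i+1)h)$ on which $w(\cdot)$ is smoothly extendable near $-h$, and Lemma~\ref{lem:tilde_phi} then makes $\tilde{\varphi}$ continuous on $[\tau,\tau+\delta_*] \times \mathbb R^n$. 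I would then fix a small $s > 0$, to be tuned later, with $s \leq \min\{\delta,\delta_\circ,\delta_*\}$, and consider in $\mathbb R^{n+1}$ the convex compact target $Y = \{(\tau+s, z+ls) : l \in L\}$; by the cone-growth estimate, $\min_{y \in Y} \tilde{\varphi}(y) - \tilde{\varphi}(\tau,z) > \epsilon_\circ s$.

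Next I would apply the Clarke--Ledyaev multidirectional mean value inequality (cf.\ \cite{Clarke_Ledyaev_1994}) to the continuous function $\tilde{\varphi}$ with base point $(\tau,z)$ and convex compact target $Y$: for any prescribed accuracy $\rho > 0$ it delivers a point $(t_*,x_*)$ within distance $\rho$ of $\mathrm{co}(\{(\tau,z)\} \cup Y)$ and a Fr\'echet subgradient $(p_0,p)$ of $\tilde{\varphi}$ at $(t_*,x_*)$ such that $s\,p_0 + s\,\langle p, l \rangle > \tfrac{1}{2}\epsilon_\circ s$ for every $l \in L$. By choosing $s$ and $\rho$ so that $s\,\max_{l \in L}\|l\| + \rho \leq \delta$, the point $(t_*,x_*)$ lies inside $\Omega_\delta$, giving (\ref{lem:mvi:t_s_y_s_p0_p}), and dividing through by $s > 0$ yields $p_0 + \langle p, l\rangle > \epsilon_\circ/2$ uniformly on $L$.

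The main obstacle is to upgrade the finite-dimensional subgradient of $\tilde{\varphi}$ at $(t_*,x_*)$ to an actual element of $D^-\varphi(t_*,x_*,\kappa_{t_*}(\cdot))$ as required by~(\ref{subdifferential}), which is formulated using some $\kappa'(\cdot) \in \Lambda_0(t_*,x_*,\kappa_{t_*}(\cdot))$ rather than the original $\kappa$. A direct inspection shows that $\kappa_t(\cdot)$ and $\kappa'_t(\cdot)$ agree on $[-h, t_*-t)$ and differ only by the constant $z - x_*$ on the short interval $[t_*-t, 0)$, while the extra point-values $\|\cdot(-h)\|$ and $\|\cdot(i h - t)\|$ in (\ref{def:upsilon}) coincide on the two paths for $t$ close enough to $t_*$ (using $t_* > i h$ and $t < t_* + h$). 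Condition $(\varphi_2)$ then bounds the discrepancy by $|\varphi(t,x,\kappa_t(\cdot)) - \varphi(t,x,\kappa'_t(\cdot))| \leq \lambda_\varphi \|z - x_*\|(t - t_*)$, so the obtained pair shifts only in its time component by at most $\lambda_\varphi \|z-x_*\| \leq \lambda_\varphi(s\,M + \rho)$ with $M = \max_{l \in L}\|l\|$. Tuning $s$ and $\rho$ so that this shift is dominated by $\epsilon_\circ/4$ preserves strict positivity and produces $(p_0,p) \in D^-\varphi(t_*,x_*,\kappa_{t_*}(\cdot))$ satisfying (\ref{lem:mvi:state}), completing the argument.
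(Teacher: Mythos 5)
Your proposal is essentially correct and lands on the same conceptual scheme as the paper (which itself credits \cite{Clarke_Ledyaev_1994}), but the execution differs in one substantive way: you invoke the multidirectional mean value inequality as a black box for the finite-dimensional function $\tilde{\varphi}$, whereas the paper proves the needed MVI-type statement from scratch by a quadratic penalization (doubling of variables): it minimizes $\gamma_k(t,x,\xi,y) = \varphi(t,x,\kappa_t(\cdot)) + k\|x-y\|^2 + k(t-\xi)^2 - \epsilon_\circ(\xi-\tau)$ over the compact set $\Omega_\delta \times \Omega_\delta^*$, shows the minimizers converge with $\overline t < \tau+\delta$, and reads off the subgradient $(p_0,p)$ from the penalty terms, obtaining (\ref{lem:mvi:state}) by varying $(\xi,y)$ along directions $l \in L$ inside the cone set $\Omega_\delta^*$ (this is where convexity of $L$ enters). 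The hand-rolled version buys three things automatically that your black-box application must supply separately: (i) the located point stays in $[\tau,\tau+\delta]\times\mathbb R^n$ — since $\tilde{\varphi}$ is only defined and continuous on $[\tau,\tau+\delta_*]\times\mathbb R^n$ (Lemma \ref{lem:tilde_phi}), you must apply Clarke--Ledyaev to an lsc extension chosen so that the located point cannot fall at $t_* < \tau$ (extending by $+\infty$ outside the slab works; a continuous reflection as in Lemma \ref{lem:viscosity_and_D_solutions} does not, because a subgradient of the reflected function at $t_*<\tau$ is useless here); (ii) compatibility with the one-sided definition (\ref{subdifferential}) at the boundary point $t_*=\tau$, which the paper's constrained minimization handles since its verification only tests $(t,x)\in O^+_{\cdot}(t_k,x_k)$; and (iii) the correction of $p_0$ by $-\lambda_\varphi\|x_k-z\|$, which the paper builds directly into (\ref{lem:mvi:p0_p}). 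Your treatment of point (iii) — bounding the mismatch between $\kappa_t(\cdot)$ and a path from $\Lambda_0(t_*,x_*,\kappa_{t_*}(\cdot))$ by $\lambda_\varphi\|z-x_*\|(t-t_*)$ via $(\varphi_2)$, checking that the extra point-value terms in (\ref{def:upsilon}) vanish, and absorbing the resulting shift of $p_0$ into the strict inequality by taking $s,\rho$ small — is exactly the paper's estimate (\ref{lem:mvi:lambda_phi}) combined with (\ref{lem:mvi:nu}) and (\ref{lem:mvi:x_k_z}), so that part is sound; only point (i)--(ii) need to be made explicit for your route to close.
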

\begin{proof}
Take $\delta_* > 0$ from Lemma \ref{lem:delta_s}. According to (\ref{lem:mvi:t_s_y_s_p0_p}), one can take $\alpha_\Omega > 0$ such that $\{\|x\|,\|w(\cdot)\|_\infty\} \leq \alpha_\Omega$ for any $(t,x) \in \Omega_{\delta_*}$. Due to condition $(\varphi_2)$, define $\lambda_\varphi = \lambda_\varphi(\alpha_\Omega)$. Then, for every $(t,x), (t_*,x_*) \in \Omega_{\delta_*}$ satisfying $t \geq t_*$, and $\chi(\cdot) \in \Lambda_0(t_*,x_*,\kappa_{t_*}(\cdot))$, we derive
\begin{equation}\label{lem:mvi:lambda_phi}
|\varphi(t,x,\chi_t(\cdot)) - \varphi(t,x,\kappa_t(\cdot))|
\leq \lambda_\varphi \|\chi_t(\cdot) - \kappa_t(\cdot)\|_1
\leq \lambda_\varphi \|x_* - z\| (t - t_*).
\end{equation}
According to Lemma \ref{lem:positive_part_deriv}, there exist $\epsilon_\circ,\delta_\circ > 0$ such that (\ref{lem:positive_part_deriv:state}) holds. Then, without loss of generality, we can suppose that
\begin{equation}\label{lem:mvi:nu}
\delta \leq \min\{\delta_*,\delta_\circ\},\quad \delta < \epsilon_\circ / (\lambda_\varphi (1 + \lambda_L)),\quad \lambda_L = \max\{\|l\|\,|\,l \in L\}.
\end{equation}

For each $k \in \mathbb N$, define the function
\begin{equation}\label{lem:mvi:gamma_k}
\gamma_k(t,x,\xi,y) = \varphi(t,x,\kappa_t(\cdot)) + k \|x - y\|^2 + k (t - \xi)^2 - \epsilon_\circ (\xi - \tau),
\end{equation}
where $(t,x) \in \Omega_\delta$ and $(\xi,y) \in \Omega_\delta^* := \{(\xi,y) \in\Omega_\delta \colon \min\limits_{l \in L} \|y - z - l (\xi - \tau)\| = 0\}$. According to the choice of $\delta$ and Lemma \ref{lem:tilde_phi}, $\gamma_k$ is continuous. The set $\Omega_\delta \times \Omega_\delta^*$ is compact. Therefore, there exists $(t_k,x_k,\xi_k,y_k) \in \Omega_\delta \times \Omega_\delta^*$ such that
\begin{equation}\label{lem:mvi:min_point}
\gamma_k(t_k,x_k,\xi_k,y_k) = \min\limits_{(t,x,\xi,y) \in \Omega_\delta \times \Omega_\delta^*} \gamma_k(t,x,\xi,y).
\end{equation}
Furthermore, without loss of generality, we suppose that $(t_k,x_k,\xi_k,y_k) \to (\overline{t},\overline{x},\overline{\xi},\overline{y}) \in \Omega_\delta \times \Omega_\delta^*$ as $k \to \infty$. Due to (\ref{lem:mvi:min_point}), we have
\begin{equation}\label{lem:mvi:min_and_init_point}
\gamma_k(t_k,x_k,\xi_k,y_k) \leq \gamma_k(\tau,z,\tau,z) = \varphi(\tau,z,w(\cdot)).
\end{equation}
Hence, we obtain
\begin{equation}\label{lem:mvi:limit}
\overline{t} = \overline{\xi},\quad \overline{x} = \overline{y}.
\end{equation}

Let us show that $\overline{t} < \tau + \delta$. For the sake of a contradiction, suppose that $\overline{t} = \tau + \delta$. Then, applying Lemma \ref{lem:tilde_phi} and (\ref{lem:positive_part_deriv:state}), (\ref{lem:mvi:gamma_k}),  we derive
\begin{equation*}
\begin{array}{c}
\liminf\limits_{k \to \infty} \gamma_k(t_k,x_k,\xi_k,y_k) \geq \lim\limits_{k \to \infty}\big(\varphi(t_k,x_k,\kappa_{t_k}(\cdot)) - \epsilon_\circ (\xi_k - \tau) \big) \\[0.2cm]
 = \varphi(\tau + \delta, \overline{y}, \kappa_{\tau + \delta}(\cdot)) - \epsilon_\circ \delta > \varphi(\tau,z,w(\cdot)).
 \end{array}
\end{equation*}
This inequality contradicts (\ref{lem:mvi:min_and_init_point}).

In accordance with $\overline{t} < \tau + \delta$ and (\ref{lem:mvi:limit}), one can take $k \in \mathbb N$ so that
\begin{equation}\label{lem:mvi:k}
t_k < \tau + \delta,\quad \xi_k < \tau + \delta,\quad \|x_k - y_k\| \leq \delta / 4,\quad \lambda_L |t_k - \xi_k| \leq \delta / 4,
\end{equation}
where the number $\lambda_L$ is defined in (\ref{lem:mvi:nu}). Put
\begin{equation}\label{lem:mvi:p0_p}
p_0 = - 2 k (t_k - \xi_k) - \lambda_\varphi \|x_k - z\|,\quad p = - 2 k (x_k - y_k).
\end{equation}

Let us prove the inclusion $(p_0,p) \in D^- \varphi (t_k,x_k,\kappa_{t_k}(\cdot))$. Since $(\xi_k,y_k) \in \Omega_\delta^*$, there exists $l_k$ such that $y_k = z + l_k (\xi_k - \tau)$. Then, due to definition $\lambda_L$ in (\ref{lem:mvi:nu}) and (\ref{lem:mvi:k}), we have
\begin{equation*}
\|x - z - l_k (t - \tau)\| \leq \|x - x_k\| + \|x_k - y_k\| + \lambda_L |t - t_k| + \lambda_L |t_k - \xi_k| \leq \delta
\end{equation*}
for any $(t,x) \in O^+_{\delta (1 + \lambda_L) / 4}(t_k,x_k)$. It means that $(t,x) \in \Omega_\delta$ for any $(t,x) \in O^+_{\delta (1 + \lambda_L) / 4}(t_k,x_k)$. Applying (\ref{lem:mvi:gamma_k}), (\ref{lem:mvi:min_point}), we obtain
\begin{equation*}
\begin{array}{c}
0 \leq \gamma_k(t, x, \xi_k, y_k) -  \gamma_k(t_k, x_k, \xi_k, y_k)
= \varphi(t,x,\kappa_t(\cdot)) - \varphi(t_k,x_k,\kappa_{t_k}(\cdot))  \\[0.2cm]
+ k \|x - x_k\|^2 + 2 k \langle x - x_k, x_k - y_k \rangle + k (t - t_k)^2 + 2 k (t - t_k) (t_k - \xi_k)
\end{array}
\end{equation*}
for any $(t,x) \in O^+_{\delta (1 + \lambda_L) / 4}(t_k,x_k)$. Then, taking into account (\ref{subdifferential}), (\ref{lem:mvi:lambda_phi}) for $(t_*,x_*) = (t_k,x_k)$, and (\ref{lem:mvi:p0_p}), we conclude $(p_0,p) \in D^-\varphi(t_k,x_k,\kappa_{t_k}(\cdot))$.

Let us prove (\ref{lem:mvi:state}). Let $l \in L$. Since $L$ is convex, then we have
\begin{equation*}
l_\nu = (l \nu + l_k (\xi_k - \tau))/ (\nu + \xi_k - \tau) \in L,\quad \nu \in (0, \delta + \tau - \xi_k).
\end{equation*}
From this inclusion and $(\xi_k,y_k) \in \Omega_\delta^*$, we derive $\|y_k+l \nu - z - l_\nu (\xi_k+\nu - \tau)\| = 0$ that means the inclusion $(\xi_k+\nu,y_k+l \nu) \in \Omega^*_\delta$, $\nu \in (0, \delta + \tau - \xi_*)$. Then, according to (\ref{lem:mvi:gamma_k}), (\ref{lem:mvi:min_point}), for every $\nu \in (0,\delta+ \tau - \xi_*)$, we obtain
\begin{equation*}
\begin{array}{c}
0 \leq \gamma_k(t_k,x_k,\xi_k+\nu,y_k+l \nu,) - \gamma_k(t_k,x_k,\xi_k,y_k)   \\[0.2cm]
= k \|l\|^2 \nu^2 - 2 k \langle l, x_k - y_k \rangle \nu + k \nu^2 - 2 k (t_k - \xi_k) \nu - \epsilon_\circ \nu.
\end{array}
\end{equation*}
Dividing this inequality by $\nu$ and passing to the limit as $\nu \to + 0$, we get
\begin{equation}\label{lem:mvi:xi_var}
\epsilon_\circ \leq - 2 k \langle x_k - y_k, l \rangle - 2 k (t_k - \xi_k).
\end{equation}
Since $(t_k,x_k) \in \Omega_\delta$, there exists $l_x \in L$ such that $\|x_k - z - l_x(t_k - \tau)\| \leq \delta$. Then, using (\ref{lem:mvi:nu}), we derive
\begin{equation}\label{lem:mvi:x_k_z}
\|x_k - z\| \leq \|x_k - z - l_x(t_k - \tau)\| + \|l_x\| (t_k - \tau) \leq (1 + \lambda_L) \delta < \epsilon_\circ / \lambda_\varphi .
\end{equation}
From (\ref{lem:mvi:p0_p})--(\ref{lem:mvi:x_k_z}), we conclude (\ref{lem:mvi:state}). Thus, taking $(t_*,x_*) = (t_k,x_k)$ we obtain statement of the lemma.
\end{proof}

\begin{lemma}\label{lem:directional_derivative_continuous}
Let $\varphi \colon \mathbb G \mapsto \mathbb R$ satisfy $(\varphi_2)$ and $(\tau,z,w(\cdot)) \in \mathbb G_*$. If there exists $l_* \in \mathbb R^n$ such that $\partial^-_{1,l_*}\varphi(t,z,w(\cdot)) \in \mathbb R$, then $\partial^-_{1,l}\varphi(t,z,w(\cdot)) \in \mathbb R$ for every $l \in \mathbb R^n$, and the function $\phi(l) = \partial^-_{1,l}\varphi(t,z,w(\cdot))$, $l \in \mathbb R^n$, is continuous. If there exists $l_* \in \mathbb R^n$ such that $\partial^-_{1,l_*}\varphi(t,z,w(\cdot)) = + \infty$, then $\partial^-_{1,l}\varphi(t,z,w(\cdot))=  + \infty$ for every $l \in \mathbb R^n$.
\end{lemma}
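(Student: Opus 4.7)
The plan is to reduce everything to a uniform Lipschitz estimate on the difference quotient in the direction variable. For a fixed $(\tau,z,w(\cdot)) \in \mathbb G_*$ and $\kappa(\cdot) \in \Lambda_0(\tau,z,w(\cdot))$ (which, by the definition of $\Lambda_0$, is determined uniquely), set
\begin{equation*}
a_\delta(l) = \frac{\varphi(\tau+\delta, z + l\delta, \kappa_{\tau+\delta}(\cdot)) - \varphi(\tau,z,w(\cdot))}{\delta}, \quad l \in \mathbb R^n,\ \delta > 0,
\end{equation*}
so that $\partial^-_{1,l}\varphi(\tau,z,w(\cdot)) = \liminf_{\delta \downarrow 0} a_\delta(l)$. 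The key observation is that the two states $(z+l\delta,\kappa_{\tau+\delta}(\cdot))$ and $(z+l'\delta,\kappa_{\tau+\delta}(\cdot))$ differ only in the first component, so $(\varphi_2)$ will produce the desired bound in $\|l-l'\|$ directly.

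First, I would note that $\|\kappa_{\tau+\delta}(\cdot)\|_\infty \leq \max\{\|z\|,\|w(\cdot)\|_\infty\}$ for every sufficiently small $\delta > 0$. Hence, fixing any $R > 0$, for every $l,l' \in \mathbb R^n$ with $\|l\|,\|l'\|\leq R$ and $\delta$ small enough, both pairs $(z+l\delta,\kappa_{\tau+\delta}(\cdot))$ and $(z+l'\delta,\kappa_{\tau+\delta}(\cdot))$ belong to $P(\alpha)$ with $\alpha = \max\{\|z\|,\|w(\cdot)\|_\infty\} + R + 1$. Applying $(\varphi_2)$ with $\lambda_\varphi = \lambda_\varphi(\alpha)$ and observing that $\upsilon(\tau+\delta,(l-l')\delta,0(\cdot)) = \delta\|l-l'\|$ (since all the norms of the zero function in the definition of $\upsilon$ vanish), one obtains
\begin{equation*}
|a_\delta(l) - a_\delta(l')| \leq \lambda_\varphi \|l-l'\|,
\end{equation*}
with $\lambda_\varphi$ independent of $\delta$ (once $\delta$ is small enough).

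For the first claim, assume $\phi(l_*) \in \mathbb R$. For any $l \in \mathbb R^n$, the bound above (applied with $R = \|l\| + \|l_*\|+1$) gives $a_\delta(l_*) - \lambda_\varphi\|l-l_*\| \leq a_\delta(l) \leq a_\delta(l_*) + \lambda_\varphi\|l-l_*\|$ for all small $\delta$. Passing to $\liminf_{\delta \downarrow 0}$ yields
\begin{equation*}
|\phi(l) - \phi(l_*)| \leq \lambda_\varphi \|l - l_*\|,
\end{equation*}
which establishes both the finiteness of $\phi(l)$ and, applied to arbitrary pairs $l,l'$ inside a fixed ball, the local Lipschitz (and therefore continuous) dependence of $\phi$ on $l$. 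For the second claim, if $\phi(l_*) = +\infty$, then by the definition of liminf, $a_\delta(l_*) \to +\infty$ as $\delta \downarrow 0$; combining with $a_\delta(l) \geq a_\delta(l_*) - \lambda_\varphi\|l-l_*\|$ gives $a_\delta(l) \to +\infty$, hence $\phi(l) = +\infty$.

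There is no substantive obstacle here; the argument is a direct application of condition $(\varphi_2)$. The only bookkeeping point to be careful about is verifying that $\upsilon$ evaluated on the zero function component collapses to the Euclidean norm of the state difference, which is immediate from (\ref{def:upsilon}).
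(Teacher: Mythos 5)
Your proposal is correct and is exactly the argument the paper has in mind: the paper's proof of this lemma is the one-line remark that it ``follows directly from condition $(\varphi_2)$ and definition (\ref{directional_derivatives})'', and your uniform bound $|a_\delta(l)-a_\delta(l')|\le\lambda_\varphi\|l-l'\|$, obtained because the two evaluation points share the history $\kappa_{\tau+\delta}(\cdot)$ so that $\upsilon$ collapses to $\delta\|l-l'\|$, is precisely the missing detail. The passage to $\liminf$ and the treatment of the $+\infty$ case are both handled correctly.
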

\begin{proof}
follows directly from condition $(\varphi_2)$ and definition (\ref{directional_derivatives}).
\end{proof}

\begin{lemma}\label{lem:D_and_dini_solutions}
If the functional $\varphi\colon \mathbb G \mapsto \mathbb R$ satisfies {\em (\ref{subdifferential_viscosity_inequality})} and {\em (\ref{superdifferential_viscosity_inequality})}, then it satisfies {\em (\ref{lower_directional_derivative_inequality})} and {\em (\ref{upper_directional_derivative_inequality})}.
\end{lemma}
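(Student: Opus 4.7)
By symmetry it suffices to prove the implication $(\ref{subdifferential_viscosity_inequality}) \Rightarrow (\ref{lower_directional_derivative_inequality})$; the dual one follows by verbatim adjustment with $D^-$, $\inf$, $\partial^-$ replaced by $D^+$, $\sup$, $\partial^+$. Fix $(\tau_0, z_0, w_0) \in \mathbb G_*$ and $s \in \mathbb R^n$, and set $g_\tau = \partial^{ci}_{\tau, w} g(\tau_0, w_0)$, $H_0 = H(\tau_0, z_0, w_0(-h), s)$, and $L = F^0(z_0, w_0(-h)) + g_\tau$. Assume for contradiction that $(\ref{lower_directional_derivative_inequality})$ fails at $(\tau_0, z_0, w_0, s)$: since $L$ is convex and compact (a closed Euclidean ball translated by $g_\tau$) and $l \mapsto \partial^-_{1,l}\varphi(\tau_0, z_0, w_0)$ is continuous by Lemma \ref{lem:directional_derivative_continuous}, the failure gives some $\varepsilon > 0$ with
\[\partial^-_{1, l} \varphi(\tau_0, z_0, w_0) + \langle g_\tau, s \rangle + H_0 - \langle l, s \rangle \geq \varepsilon, \quad l \in L,\]
and by continuity this inequality extends with slack $\varepsilon / 2$ to the closed $\mu$-neighborhood $L_\mu$ of $L$ for all sufficiently small $\mu > 0$.

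I then form the modified functional
\[\varphi''(\tau, z, w(\cdot)) = \varphi(\tau, z, w(\cdot)) + \tau \bigl( \langle g_\tau, s \rangle + H_0 - \varepsilon / 4 \bigr) - \langle z, s \rangle,\]
which inherits $(\varphi_1)$ and $(\varphi_2)$ and satisfies $\partial^-_{1, l} \varphi''(\tau_0, z_0, w_0) \geq \varepsilon / 4 > 0$ for every $l \in L_\mu$. Applying Lemma \ref{lem:mvi} to $\varphi''$ with the convex compact set $L_\mu$ and any $\delta > 0$ small enough that $(t_*, x_*, \kappa_{t_*}(\cdot)) \in \mathbb G_*$ (Lemma \ref{lem:delta_s}), I obtain $(t_*, x_*) \in \Omega_\delta$ and $(p_0'', p'') \in D^- \varphi''(t_*, x_*, \kappa_{t_*}(\cdot))$ with $p_0'' + \langle l, p'' \rangle > 0$ for every $l \in L_\mu$. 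Evaluating the support function of $L_\mu$ at $-p''$ yields
\[p_0'' > \bigl[ c_H(1 + \|z_0\| + \|w_0(-h)\|) + \mu \bigr] \|p''\| - \langle g_\tau, p'' \rangle.\]
The affine change of variables places $(p_0'' - a', p'' + s) \in D^- \varphi(t_*, x_*, \kappa_{t_*}(\cdot))$ for $a' = \langle g_\tau, s \rangle + H_0 - \varepsilon / 4$, so $(\ref{subdifferential_viscosity_inequality})$ at $(t_*, x_*, \kappa_{t_*}(\cdot))$ together with the Lipschitz bound $H_*(p'' + s) \geq H_*(s) - c_H^* \|p''\|$ from $(H_2)$ (with $g_* = \partial^{ci}_{\tau, w} g(t_*, \kappa_{t_*}(\cdot))$, $H_*(\cdot) = H(t_*, x_*, \kappa_{t_*}(-h), \cdot)$, $c_H^* = c_H(1 + \|x_*\| + \|\kappa_{t_*}(-h)\|)$) produces the matching upper bound, and the two collapse to
\[\mu \|p''\| + \bigl[ c_H(1 + \|z_0\| + \|w_0(-h)\|) - c_H^* \bigr] \|p''\| + \langle g_* - g_\tau, p'' \rangle + \langle g_* - g_\tau, s \rangle + H_*(s) - H_0 < -\varepsilon / 4.\]

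As $\delta \downarrow 0$, Lemmas \ref{lem:kappa} and \ref{lem:dg} together with $(H_1)$ force $c_H^* \to c_H(1 + \|z_0\| + \|w_0(-h)\|)$, $g_* \to g_\tau$, and $H_*(s) \to H_0$; choosing $\delta$ so small that the coefficients of $\|p''\|$ other than $\mu$ are bounded in absolute value by $\mu / 2$ and the additive error $\langle g_* - g_\tau, s \rangle + H_*(s) - H_0$ is bounded in absolute value by $\varepsilon / 8$, the combined inequality reduces to $(\mu / 2) \|p''\| < -\varepsilon / 8$, which is impossible since $\|p''\| \geq 0$ — the desired contradiction. The main obstacle, and the single new ingredient relative to the time-delay scheme of \cite{Plaksin_2020}, is controlling the $\|p''\|$-linear perturbation terms produced by the neutral-type coupling $\langle g_\tau, \cdot \rangle$: without the preliminary $\mu$-enlargement of $L$, the support-function coefficient $c_H(1 + \|z_0\| + \|w_0(-h)\|)$ of $\|p''\|$ produced by $\sigma_L(-p'')$ can be exactly cancelled by $c_H^*$ in the limit, leaving nothing to dominate the remaining Lipschitz errors, whereas the inflation of $L$ by $\mu$ supplies exactly the uniform positivity needed to close the argument.
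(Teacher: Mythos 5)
Your argument is correct and follows essentially the same route as the paper's proof: negate (\ref{lower_directional_derivative_inequality}), enlarge the constraint set (your $L_\mu$ is exactly the paper's $F^\eta(z,w(-h))+\partial^{ci}_{\tau,w} g(\tau,w(\cdot))$), pass to the affinely shifted functional, invoke Lemma \ref{lem:mvi} to produce a subgradient, translate it back, and close with (\ref{subdifferential_viscosity_inequality}) together with the $(H_1)$--$(H_3)$ and Lemma \ref{lem:dg} continuity estimates. The only point to tidy is the case $\partial^-_{1,l}\varphi(\tau_0,z_0,w_0(\cdot))\equiv+\infty$, where Lemma \ref{lem:directional_derivative_continuous} supplies no real-valued continuous function of $l$; as the paper notes explicitly, the uniform inequality on the enlarged set then holds trivially, so your argument is unaffected.
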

\begin{proof}
Let us prove (\ref{lower_directional_derivative_inequality}). For the sake of a contradiction, suppose that there exist $(\tau,z,w(\cdot)) \in \mathbb G_*$ and $s \in \mathbb R^n$ such that
\begin{equation*}
\partial^-_{1,l} \varphi(\tau,z,w(\cdot)) + \langle \partial^{ci}_{\tau,w} g(\tau,w(\cdot)), s \rangle + H(\tau,z,w(-h),s) - \langle l,s \rangle > 0
\end{equation*}
for any $l \in F(z,w(-h)) + \partial^{ci}_{\tau,w} g(\tau,w(\cdot))$. In the case if there exists $l_* \in F(z,w(-h)) + \partial^{ci}_{\tau,w} g(\tau,w(\cdot))$ such that $\partial^-_{1,l_*} \varphi(\tau,z,w(\cdot)) \in \mathbb R$, then, taking into account Lemma \ref{lem:directional_derivative_continuous}, one can take $\eta, \varepsilon > 0$ so that
\begin{equation}\label{lem:f_d:eta_epsilon}
\partial^-_{1,l} \varphi(\tau,z,w(\cdot)) + \langle \partial^{ci}_{\tau,w} g(\tau,w(\cdot)), s \rangle
+ H(\tau,z,w(-h),s) - \langle l,s \rangle > \varepsilon
\end{equation}
for any $l \in F^\eta(z,w(-h)) + \partial^{ci}_{\tau,w} g(\tau,w(\cdot))$.
If there exists $l_0 \in F(z,w(-h))$ such that $\partial^-_{1,l_0} \varphi(t,z,w(\cdot)) = + \infty$, then, in accordance with Lemma \ref{lem:directional_derivative_continuous}, inequality (\ref{lem:f_d:eta_epsilon}) also holds.

Put $L = F^\eta(z,w(-h)) + \partial^{ci}_{\tau,w} g(\tau,w(\cdot))$. According to conditions $(H_1)$ and (\ref{def:F}), there exists $\delta_2 > 0$ such that
\begin{equation}\label{lem:f_d:delta_2}
\begin{array}{c}
|H(t,x,w(t-\tau-h),s) - H(\tau,z,w(-h),s)| \leq \varepsilon,\\[0.2cm]
F^0(x,w(t-\tau-h)) \subset F^\eta(z,w(-h)),
\end{array}
\quad (t,x) \in \Omega_{\delta_2}.
\end{equation}
Define the functional $\varphi_* \colon \mathbb G \mapsto \mathbb R$ by
\begin{equation*}
\begin{array}{c}
\varphi_*(t,x,r(\cdot)) = \varphi(t,x,r(\cdot))  \\[0.2cm]
+ \big(\langle \partial^{ci}_{\tau,w} g(\tau,w(\cdot)), s \rangle + H(\tau,z,w(-h),s) - \varepsilon\big) (t - \tau) - \langle x, s \rangle,
\end{array}
\ \ (t,x,r(\cdot)) \in \mathbb G.
\end{equation*}
Since $\varphi$ satisfy $(\varphi_2)$, then $\varphi_*$ satisfy $(\varphi_2)$. Moreover, from (\ref{lem:f_d:eta_epsilon}), we derive $\partial^-_{1,l} \varphi_*(\tau,z,w(\cdot)) > 0$, $l \in L$. Applying Lemma \ref{lem:mvi} to the functional $\varphi_*$, the set $L$, and $\delta = \min\{\delta_1,\delta_2\}$, we obtain that there exist $(t_*,x_*) \in \Omega_\delta$ and $(p_0,p) \in D^-\varphi_*(t_*,x_*,\kappa_{t_*}(\cdot))$ such that
\begin{equation}\label{lem:f_d:p0_p}
p_0 + \langle l,p \rangle > 0,\quad l \in L = F^\eta(z,w(-h)) + \partial^{ci}_{\tau,w} g(\tau,w(\cdot)).
\end{equation}
Let us define
\begin{equation*}
p'_0 = p_0 - H(\tau,z,w(-h),s) - \langle \partial^{ci}_{\tau,w} g(\tau,w(\cdot)), s \rangle + \varepsilon,\quad p' = p + s.
\end{equation*}
Then, we have $(p'_0,p') \in D^-\varphi(t_*,x_*,\kappa_{t_*}(\cdot))$. Thus, taking the choice of $\delta$ into account, from (\ref{def:F}), (\ref{subdifferential_viscosity_inequality}), (\ref{lem:f_d:delta_2}), and condition $(H_3)$, we obtain
\begin{equation*}
\begin{array}{c}
0 \geq p'_0 + \langle \partial^{ci}_{\tau,w} g(\tau,w(\cdot)), p' \rangle +
H(t_*,x_*,w(t_*-\tau-h),p') \\[0.3cm]
\geq p_0 + \langle \partial^{ci}_{\tau,w} g(\tau,w(\cdot)), p \rangle - c_H \big(1 + \|x_*\| + \|w(t_*-\tau-h)\|\big) \|p\| \\[0.3cm]
= p_0 + \langle \partial^{ci}_{\tau,w} g(\tau,w(\cdot)), p \rangle + \min\limits_{l \in F^0(x_*,w(t_*-\tau-h))} \langle l, p \rangle \\[0.3cm]
\geq p_0 + \langle \partial^{ci}_{\tau,w} g(\tau,w(\cdot)), p \rangle + \min\limits_{l \in F^\eta(z,w(-h))} \langle l, p \rangle.
\end{array}
\end{equation*}
This estimate contradicts (\ref{lem:f_d:p0_p}). Thus, (\ref{lower_directional_derivative_inequality}) holds. For $\partial^+_l \varphi(t,z,w(\cdot))$ and $D^+ \varphi(t,z,w(\cdot))$, one can establish statements similar to Lemmas \ref{lem:mvi}, \ref{lem:directional_derivative_continuous} and prove (\ref{upper_directional_derivative_inequality}).
\end{proof}

\begin{lemma}\label{lem:dini_and_C1_solutions}
If the functional $\varphi \colon \mathbb G \mapsto \mathbb R$ satisfies $(\varphi_1)$, $(\varphi_2)$, {\rm (\ref{lower_directional_derivative_inequality})}, and {\rm (\ref{upper_directional_derivative_inequality})}, then it is $\mathrm{C^1}$-minimax solution of problem {\rm (\ref{Hamilton-Jacobi_equation}), (\ref{terminal_condition})}.
\end{lemma}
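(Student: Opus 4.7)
The plan is to derive the integral minimax inequalities \eqref{def:upper_minmax_solution} and \eqref{def:lower_minmax_solution} from the infinitesimal directional derivative inequalities \eqref{lower_directional_derivative_inequality} and \eqref{upper_directional_derivative_inequality} by constructing an appropriate trajectory step by step, following the Subbotin--Clarke--Ledyaev scheme. Conditions $(\varphi_1)$, $(\varphi_2)$, and the terminal condition \eqref{terminal_condition} are inherited directly, so I focus on establishing \eqref{def:upper_minmax_solution}; the lower inequality \eqref{def:lower_minmax_solution} follows by a symmetric argument using \eqref{upper_directional_derivative_inequality}.

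Fix $i \in \overline{0, I-1}$, $(\tau, w(\cdot)) \in [ih, (i+1)h) \times \mathrm{C}^1$, $t_* \in (\tau, (i+1)h]$, $s \in \mathbb R^n$, $\eta \in (0, 1]$, and set $z = w(-0)$; note that $t_* - \tau \leq h$. The $\mathrm{C}^1$-regularity of $w$ ensures, via Lemma \ref{lem:dg}, that for any $x(\cdot) \in X^\eta(\tau, z, w(\cdot))$ the mapping $\xi \mapsto \partial^{ci}_{\tau,w} g(\xi, x_\xi(\cdot))$ is continuous on $[\tau, t_*]$ and coincides with $\frac{d}{d\xi} g(\xi, x(\xi - h))$. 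Given $\varepsilon > 0$, I would introduce $\Psi(\xi, x(\cdot)) = \varphi(\xi, x(\xi), x_\xi(\cdot)) + \omega(\tau, \xi, x(\cdot), s) - \varphi(\tau, z, w(\cdot))$ and consider
\[
\xi^\circ = \sup\{\xi \in [\tau, t_*] \colon \exists\, x(\cdot) \in X^\eta(\tau, z, w(\cdot)) \text{ with } \Psi(\xi, x(\cdot)) \leq \varepsilon (\xi - \tau)\}.
\]
A compactness argument based on Lemma \ref{lem:alpha_x_lambda_x} and the Arzelà--Ascoli theorem, combined with the continuity provided by $(\varphi_1)$, $(\varphi_2)$, and Lemma \ref{lem:tilde_phi}, shows the supremum is attained by some $x^\circ(\cdot)$.

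The decisive step is the contradiction argument. Assume $\xi^\circ < t_*$ and apply \eqref{lower_directional_derivative_inequality} at $(\xi^\circ, x^\circ(\xi^\circ), x^\circ_{\xi^\circ}(\cdot)) \in \mathbb G_*$ to choose $l_0 \in F^0(x^\circ(\xi^\circ), x^\circ(\xi^\circ - h))$ so that, setting $l = l_0 + \partial^{ci}_{\tau,w} g(\xi^\circ, x^\circ_{\xi^\circ}(\cdot))$,
\[
\partial^-_{1, l} \varphi(\xi^\circ, x^\circ(\xi^\circ), x^\circ_{\xi^\circ}(\cdot)) + \langle \partial^{ci}_{\tau,w} g(\xi^\circ, x^\circ_{\xi^\circ}(\cdot)), s \rangle + H(\xi^\circ, x^\circ(\xi^\circ), x^\circ(\xi^\circ-h), s) - \langle l, s \rangle \leq \varepsilon / 2.
\]
Using the liminf definition of $\partial^-_{1,l}$, pick $\delta^* > 0$ arbitrarily small so that the corresponding incremental quotient is majorized by $\partial^-_{1,l}\varphi + \varepsilon / 4$, and extend $x^\circ$ on $[\xi^\circ, \xi^\circ + \delta^*]$ with constant slope $l$. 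This extension lies in $X^\eta$: since $\xi^\circ + \delta^* - h < \tau$, the history $x(\xi - h) = w(\xi - \tau - h)$ is $\mathrm{C}^1$, and $\frac{d}{d\xi}(x(\xi) - g(\xi, x(\xi-h)))$ is a small continuous perturbation of $l_0 \in F^0$, which lies in $F^\eta$ thanks to the slack $\eta > 0$ for $\delta^*$ small enough. Comparing the true history $x_{\xi^\circ + \delta^*}(\cdot)$ with the $\Lambda_0$-history $\kappa_{\xi^\circ + \delta^*}(\cdot)$ used in the directional derivative yields $\upsilon$-distance $O((\delta^*)^2)$, so by $(\varphi_2)$ the change of $\varphi$ along the true trajectory differs from $\delta^* \partial^-_{1,l}\varphi$ by $o(\delta^*)$; and by continuity of $\partial^{ci}_{\tau,w}g$ and $H$ the change of $\omega$ on $[\xi^\circ, \xi^\circ + \delta^*]$ equals $\delta^* \bigl(H + \langle \partial^{ci}_{\tau,w}g, s\rangle - \langle l, s\rangle\bigr) + o(\delta^*)$. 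Combining these estimates with the displayed bound gives $\Psi(\xi^\circ + \delta^*, x(\cdot)) \leq \varepsilon(\xi^\circ + \delta^* - \tau)$ for $\delta^*$ sufficiently small, contradicting the definition of $\xi^\circ$.

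Hence $\xi^\circ = t_*$ and $\Psi(t_*, x^\circ(\cdot)) \leq \varepsilon(t_* - \tau)$; letting $\varepsilon \downarrow 0$ and extracting a uniformly convergent subsequence of the corresponding trajectories (bounded via Lemma \ref{lem:alpha_x_lambda_x} and equicontinuous after passing to $y = x - g(\cdot, x(\cdot - h))$) yields the $x(\cdot) \in X^\eta(\tau, z, w(\cdot))$ realizing \eqref{def:upper_minmax_solution}. The principal technical obstacle is the simultaneous bookkeeping in the single-step extension: tracking the discrepancy between $x_{\xi^\circ + \delta^*}(\cdot)$ and the $\Lambda_0$-history $\kappa_{\xi^\circ + \delta^*}(\cdot)$ appearing in $\partial^-_{1,l}\varphi$, controlling the variation of $g(\cdot, x(\cdot - h))$ and of $H$ on the small interval, and verifying the $F^\eta$-inclusion. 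The slack $\eta > 0$ intrinsic to the $\mathrm{C}^1$-minimax definition and the $\mathrm{C}^1$-regularity of $w$ that makes $\partial^{ci}_{\tau,w}g$ continuous via Lemma \ref{lem:dg} are precisely what make this book-keeping succeed.
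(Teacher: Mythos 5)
Your proposal is correct and follows essentially the same route as the paper's proof: a contradiction argument built on the maximal time $\xi_*$ up to which a trajectory in $X^\eta$ keeps $\varphi+\omega$ below a linear comparison function, attainment via compactness of $X^\eta$ and the continuity supplied by $(\varphi_1)$, $(\varphi_2)$, then a one-step extension with constant slope $l_*$ chosen from (\ref{lower_directional_derivative_inequality}), with the $F^\eta$-inclusion secured by the slack $\eta>0$ and Lemma \ref{lem:dg}, and the history discrepancy between $x_{\xi_*+\delta}(\cdot)$ and $\kappa_{\xi_*+\delta}(\cdot)$ controlled as $O(\delta^2)$ through $(\varphi_2)$. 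The only cosmetic difference is your final $\varepsilon\downarrow 0$ subsequence extraction, which is not needed since (\ref{def:upper_minmax_solution}) is stated as an infimum inequality.
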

\begin{proof}
Let us prove (\ref{def:upper_minmax_solution}) for any $i \in \overline{0,I-1}$, $(\tau,w(\cdot)) \in [i h, (i + 1) h) \times \mathrm{C}^1$, $t \in (\tau,(i+1)h]$, $\eta \in (0,1]$, and $s \in \mathbb R^n$.

For the sake of a contradiction, suppose that there exist $i \in \overline{0,I-1}$, $(\tau,w(\cdot)) \in [i h, (i + 1) h) \times \mathrm{C}^1$, $t \in (\tau,(i+1)h]$, $\eta  \in (0,1]$, $s \in \mathbb R^n$, and $\varepsilon > 0$ such that
\begin{equation}\label{lem:d_c:contr}
\varphi(t,x(t),x_t(\cdot)) - \varphi(\tau,w(-0),w(\cdot)) + \omega(\tau,t,x(\cdot),s) > \varepsilon
\end{equation}
for any $x(\cdot) \in X^\eta := X^\eta(\tau,w(-0),w(\cdot))$. Let
\begin{equation*}
\xi_* = \max \Big\{\xi \in [\tau, t] \colon \min\limits_{x(\cdot) \in X^\eta}  \big(\varphi(\xi,x(\xi),x_\xi(\cdot)) + \omega(\xi,\tau,x(\cdot),s)\big) \leq \beta(\xi)\Big\},
\end{equation*}
where $\beta(\xi) = \varphi(\tau,w(-0),w(\cdot)) + \varepsilon (\xi - \tau) / (t - \tau)$, $\xi \in [\tau, t]$. Similar to Assertion~2 from \cite{Plaksin_2021b}, one can show that the set $X^\eta$ is a compact on the space of continuous functions and there exist $\alpha_X, \lambda_X > 0$ such that
\begin{equation*}
\|x(\xi)\| \leq \alpha_X,\quad \|x(\xi) - x(\xi')\| \leq \lambda_X |\xi - \xi'|,\quad \xi,\xi' \in [\tau-h,\vartheta],\quad x(\cdot) \in X^\eta.
\end{equation*}
Using conditions $(\varphi_1)$ and $(\varphi_2)$, one can state the functional $\hat{\varphi}(\tau,w(\cdot)) = \varphi(\tau,w(-0),w(\cdot))$ is continuous at any $(\tau,w(\cdot)) \in [0,\vartheta] \times \mathrm{Lip}$. Thus, taking into account  conditions $(g)$, $(H_1)$, and definition (\ref{def:omega}) of $\omega$, we can establish that these maximum and minimum are archived. In accordance with (\ref{lem:d_c:contr}), we have $\xi_* < t$. Let the function $x(\cdot) \in X^\eta$ be such that
\begin{equation}\label{lem:d_c:xi_s}
\varphi(\xi_*,x(\xi_*),x_{\xi_*}(\cdot)) + \omega(\tau,\xi_*,x(\cdot),s) \leq \beta(\xi_*).
\end{equation}
Denote $\varepsilon_* = \varepsilon / (5 (t - \tau))$. Due to (\ref{lower_directional_derivative_inequality}), there exists $l_* \in F^0(x(\xi_*),x(\xi_*-h)) + \partial^{ci}_{\tau,w} g(\xi_*,x_{\xi_*}(\cdot))$ satisfying
\begin{equation}
\begin{array}{rcl}
\partial^-_{1,l_*} \varphi(\xi_*,x(\xi_*),x_{\xi_*}(\cdot)) & + & \langle \partial^{ci}_{\tau,w} g(\xi_*,x_{\xi_*}(\cdot)) ,s \rangle\\[0.2cm]
& + & H(\xi_*,x(\xi_*),x(\xi_*-h),s) - \langle l_*,s \rangle \leq \varepsilon_*.
\end{array}
\end{equation}
Redefine $x(\cdot)$ on the interval $[\xi_*,\vartheta]$ so that $\mathrm{d} x(\xi) / \mathrm{d} \xi = l_*$, $\xi \in [\xi_*, \vartheta]$. According to condition $(\varphi_2)$, define $\lambda_\varphi = \lambda_\varphi(\alpha_X)$. Then, due to (\ref{def:F}), (\ref{lower_directional_derivatives}), condition $(H_1)$, and Lemma \ref{lem:dg} in which, since $w(\cdot) \in \mathrm{C}^1$, we can take $\delta_w \in (\xi_* - \tau, t - \tau)$, there exist $t_* \in (\xi_*,\min\{\tau + \delta_w, \xi_* +  \varepsilon/(\lambda_\varphi \lambda_X)]$ such that
\begin{equation}
\displaystyle\frac{\varphi(t_*,x(t_*),\kappa_{t_*}(\cdot)) - \varphi(\xi_*,x(\xi_*),x_{\xi_*}(\cdot))}{t_* - \xi_*} \leq \partial^-_{1,l_*} \varphi(\xi_*,x(\xi_*),x_{\xi_*}(\cdot)) + \varepsilon_*,
\end{equation}
where $\kappa(\cdot) \in \Lambda_0(\xi_*,x(\xi_*),x_{\xi_*}(\cdot))$, and
\begin{equation}\label{lem:d_c:t_s}
\begin{array}{c}
F^0(x(\xi_*),x(\xi_*-h)) \subset F^{\eta/2}(x(\xi),x(\xi-h)),\\[0.3cm]
\big|H(\xi,x(\xi),x_{\xi}(\cdot),s) - H(\xi_*,x(\xi_*),x_{\xi_*}(\cdot),s)\big| \leq \varepsilon_*,\\[0.3cm]
\displaystyle\bigg|\frac{\mathrm{d}}{\mathrm{d} \xi}\Big(g(\xi,x_\xi(\cdot))\Big) - \partial^{ci}_{\tau,w} g(\xi_*,x_{\xi_*}(\cdot))\bigg| \leq \min\bigg\{\frac{\eta}{2}, \frac{\varepsilon_*}{\|s\| + 1}\bigg\}
\end{array}
\end{equation}
for any $\xi \in [\xi_*,t_*]$. Then, we have
\begin{equation*}
\frac{\mathrm{d}}{\mathrm{d} \xi} \Big(x(\xi) - g(\xi,x(\xi-h))\Big) \in F^\eta(x(\xi),x(\xi-h))\quad {a.e. }\ \xi \in [\xi_*,t_*].
\end{equation*}
Redefining the function $x(\cdot)$ on the interval $[t_*,\vartheta]$ such that $x(\cdot) \in X^\eta$, according to the choice of $\alpha_X$, $\lambda_X$, $\lambda_\varphi$, and $t_*$, we derive
\begin{equation*}
\big|\varphi(t_*,x(t_*),\kappa_{t_*}(\cdot)) - \varphi(t_*,x(t_*),x_{t_*}(\cdot))
\leq \lambda_\varphi \lambda_X (t_* - \xi_*)^2 \leq \varepsilon_* (t_* - \xi_*).
\end{equation*}
Using this estimate, from (\ref{lem:d_c:xi_s})--(\ref{lem:d_c:t_s}), taking the definition of $\varepsilon_*$ and $\beta$ into account, we obtain
\begin{equation*}
\varphi(t_*,x(t_*),x_{t_*}(\cdot)) + \omega(\tau,t_*,x(\cdot),s) \leq \beta(t_*).
\end{equation*}
This inequality contradict the choice of $\xi_*$.
\end{proof}

\subsection{Proof of Theorem \ref{teo:application}}

\begin{lemma}\label{lem:value_functional}
The value functional $\hat{\varphi}$ defined in {\rm (\ref{def:value_functional})} is the functional from Lemma~\ref{lem:ex_un_minimax_sol}.
\end{lemma}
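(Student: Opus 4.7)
The plan is to verify the three properties that characterize the functional of Lemma~\ref{lem:ex_un_minimax_sol}: namely, continuity of $\hat{\varphi}$ with respect to the uniform norm on $[0,\vartheta]\times\mathrm{Lip}$, the terminal condition $\hat{\varphi}(\vartheta,w(\cdot))=\sigma(w(-0),w(\cdot))$, and the two minimax inequalities~(\ref{def:upper_minmax_solution_}),~(\ref{def:lower_minmax_solution_}) along trajectories from $X^0(\tau,w(-0),w(\cdot))$. The terminal condition is immediate since at $\tau=\vartheta$ no control is exercised and $J(\vartheta,w(\cdot),u(\cdot))=\sigma(w(-0),w(\cdot))$. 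The continuity will follow from standard Gronwall-type estimates for the neutral equation (\ref{def:neutral_type_equation}) combined with conditions $(g)$, $(f_1)$--$(f_3)$, $(\sigma)$: on any bounded set of initial data, the reachable set of trajectories is uniformly Lipschitz (after subtracting $g$) and one obtains Lipschitz continuity of $\hat{\varphi}$ in an analog of condition $(\varphi_2)$, from which continuity with respect to the uniform norm follows.

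For the upper inequality (\ref{def:upper_minmax_solution_}), I would fix $(\tau,w(\cdot))\in[0,\vartheta)\times\mathrm{Lip}$, $t\in(\tau,\vartheta]$, $s\in\mathbb R^n$, $\varepsilon>0$, and choose a control $u^\varepsilon(\cdot)$ with $J(\tau,w(\cdot),u^\varepsilon(\cdot))\leq \hat{\varphi}(\tau,w(\cdot))+\varepsilon$. By condition $(f_2)$ the corresponding trajectory $x^\varepsilon(\cdot)$ belongs to $X^0(\tau,w(-0),w(\cdot))$ (taking $c_H\geq c_f$ in the definition of $F^\eta$, which is consistent with the derivation of $(H_2)$ from~(\ref{def:hamiltonian})). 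From the pointwise inequality $f^0(\xi,z,r,u)\geq H(\xi,z,r,s)-\langle f(\xi,z,r,u),s\rangle$ (valid for all $u\in\mathbb{U}$ by definition of~$H$) and integration over $[\tau,t]$, using $\tfrac{\mathrm d}{\mathrm dt}(x^\varepsilon(t)-g(t,x^\varepsilon(t-h)))=f(t,x^\varepsilon(t),x^\varepsilon(t-h),u^\varepsilon(t))$, I obtain $\int_\tau^t f^0\,\mathrm d\xi\geq\omega(\tau,t,x^\varepsilon(\cdot),s)$. Combined with the obvious estimate $\int_t^\vartheta f^0\,\mathrm d\xi + \sigma(x^\varepsilon(\vartheta),x^\varepsilon_\vartheta(\cdot))\geq \hat{\varphi}(t,x^\varepsilon_t(\cdot))$ (the tail of $u^\varepsilon$ is admissible for the subproblem at $(t,x^\varepsilon_t(\cdot))$), this yields $\hat{\varphi}(\tau,w(\cdot))+\varepsilon\geq \omega(\tau,t,x^\varepsilon(\cdot),s)+\hat{\varphi}(t,x^\varepsilon_t(\cdot))\geq \inf_{x\in X^0}(\cdots)$, and letting $\varepsilon\downarrow 0$ closes the estimate.

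For the lower inequality (\ref{def:lower_minmax_solution_}), I would use a measurable selection of the minimizer in~(\ref{def:hamiltonian}): for a given $s$, there exists a measurable $u^s(\cdot)$ along the trajectory such that $f^0(\xi,x^s(\xi),x^s(\xi-h),u^s(\xi))+\langle f(\xi,x^s(\xi),x^s(\xi-h),u^s(\xi)),s\rangle=H(\xi,x^s(\xi),x^s(\xi-h),s)$ for a.e.\ $\xi\in[\tau,t]$; this is a standard consequence of the Filippov selection theorem under $(f_1)$, $(f_3)$ and compactness of $\mathbb U$, and existence of $x^s(\cdot)$ for the neutral system~(\ref{def:neutral_type_equation}) is obtained by the usual step method. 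For every $\varepsilon>0$, extend $u^s$ on $[t,\vartheta]$ by an $\varepsilon$-optimal control for the subproblem starting at $(t,x^s_t(\cdot))$; denote the resulting admissible control by $u^{s,\varepsilon}$ and its trajectory by $\tilde x$, which agrees with $x^s$ on $[\tau-h,t]$ and lies in $X^0(\tau,w(-0),w(\cdot))$. Then $\hat{\varphi}(\tau,w(\cdot))\leq J(\tau,w(\cdot),u^{s,\varepsilon}(\cdot))=\omega(\tau,t,\tilde x(\cdot),s)+\int_t^\vartheta f^0\,\mathrm d\xi+\sigma\leq \omega(\tau,t,\tilde x(\cdot),s)+\hat{\varphi}(t,\tilde x_t(\cdot))+\varepsilon$, and sending $\varepsilon\downarrow 0$ gives~(\ref{def:lower_minmax_solution_}).

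The main obstacle is the lower inequality, specifically constructing the measurable selection $u^s(\cdot)$ together with a trajectory $x^s(\cdot)$ solving the neutral equation with this control and depending on $x^s$ itself, and then performing a clean concatenation with an $\varepsilon$-optimal tail without losing admissibility or measurability; the continuity of $\hat{\varphi}$ established in Step~1 is essential both for this concatenation and for taking $\varepsilon\downarrow 0$. Once these four steps are complete, $\hat{\varphi}$ satisfies all hypotheses of Lemma~\ref{lem:ex_un_minimax_sol} and coincides with the unique such functional.
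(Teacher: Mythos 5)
Your treatment of the upper inequality (\ref{def:upper_minmax_solution_}) matches the paper's: an $\varepsilon$-optimal control, the pointwise bound $f^0(\xi,z,r,u)\geq H(\xi,z,r,s)-\langle f(\xi,z,r,u),s\rangle$ integrated against the neutral equation to get $\int_\tau^t f^0\,\mathrm d\xi\geq\omega(\tau,t,x^\varepsilon(\cdot),s)$, and the dynamic programming principle for the tail (the paper cites Bardi--Capuzzo-Dolcetta, Ch.~VIII, Thm.~1.9, for exactly this step). Your remarks on the terminal condition and on the Lipschitz/continuity estimates are also consistent with how the paper handles these (via Assertion~2 of \cite{Plaksin_2021b} and uniqueness in Lemma~\ref{lem:varphi_extension}).

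The gap is in the lower inequality, and it is the one you yourself flag as ``the main obstacle'' without resolving it. You want a measurable control $u^s(\cdot)$ that achieves the minimum in (\ref{def:hamiltonian}) exactly, a.e.\ along a trajectory $x^s(\cdot)$ of (\ref{def:neutral_type_equation}) driven by that same control. Filippov's selection theorem does not deliver this: it produces a measurable control realizing a velocity \emph{given} a trajectory, whereas here the trajectory must be constructed simultaneously. Equivalently, you would need a solution of the differential inclusion whose right-hand side is the set of argmin velocities $\{f(t,x(t),x(t-h),u): u\in\arg\min(\langle f,s\rangle+f^0)\}$; this set is compact but in general not convex, and the argmin selection is only measurable (not continuous) in $(z,r)$, so neither Carath\'eodory existence theory nor the step method guarantees a solution --- even on the first interval $[\tau,\tau+h]$ the resulting ODE has a right-hand side discontinuous in the state. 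The paper avoids this entirely: it fixes a fine partition $t_i=\tau+i\Delta t$, chooses on each $[t_i,t_{i+1})$ a \emph{constant} control $u_i$ achieving the minimum in $H$ at the single point $(t_i,x(t_i),x(t_i-h))$ (so existence of the trajectory is immediate), uses the uniform continuity of $f$, $f^0$, $H$ along the a priori bounded, uniformly Lipschitz trajectories to bound the error on each subinterval by $3(t_{i+1}-t_i)\varepsilon$, chains the dynamic programming inequality over $i$, and lets $\varepsilon\downarrow 0$. To repair your argument you should replace the exact measurable selection by this partition-based (Krasovskii--Subbotin style) approximate feedback construction, or else justify the relaxation/convexification step that an exact selection would require.
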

\begin{proof}
Let us prove (\ref{def:upper_minmax_solution}). Let $(\tau,w(\cdot)) \in \mathbb [0,\vartheta) \times \mathrm{Lip}$, $t \in (\tau,\vartheta]$, $s \in \mathbb R^n$, and $\varepsilon > 0$. Due to definition (\ref{def:value_functional}) of $\hat{\varphi}$, one can show (see, e.g., \cite[Chapter VIII, Theorem 1.9]{Bardi_Capuzzo-Dolcetta_1997}) that there exists $u(\cdot) \colon [\tau,\vartheta] \mapsto \mathbb U$ such that
\begin{equation*}
\hat{\varphi}(t,x_t(\cdot)) + \int_\tau^t f^0(\xi,x(\xi),x(\xi-h),u(\xi)) \mathrm{d} \xi \leq \hat{\varphi}(\tau,x_\tau(\cdot))
\end{equation*}
where $x(\cdot) \in \Lambda(\tau,w(-0),w(\cdot))$ is the Lipschitz continuous function  satisfying neutral-type equation (\ref{def:neutral_type_equation}). From this estimate, using (\ref{def:hamiltonian}), we get (\ref{def:upper_minmax_solution}).

Let us prove (\ref{def:lower_minmax_solution}). Let $(\tau,w(\cdot)) \in \mathbb [0,\vartheta) \times \mathrm{Lip}$, $t \in (\tau,\vartheta]$, $s \in \mathbb R^n$, and $\varepsilon > 0$. Due to Assertion 2 from \cite{Plaksin_2021b}, there exist $\alpha_X, \lambda_X > 0$ such that
\begin{equation*}
\|x(\xi)\| \leq \alpha_X,\quad \|x(\xi) - x(\xi')\| \leq \lambda_X |\xi - \xi'|,\quad \xi,\xi' \in [\tau-h,\vartheta]
\end{equation*}
for any Lipschitz continuous function $x(\cdot) \in \Lambda(\tau,w(-0),w(\cdot))$ satisfying  neutral-type equation (\ref{def:neutral_type_equation}). Then, taking into account condition $(H_1)$ and continuity of the functions $f$ and $f^0$, there exist $\delta > 0$ such that
\begin{equation*}
\begin{array}{c}
\big|H(\xi,x(\xi),x(\xi-h),s) - H(\xi',x(\xi'),x(\xi'-h),s)\big| \leq \varepsilon,\\[0.2cm]
\big|f(\xi,x(\xi),x(\xi-h),u) - f(\xi',x(\xi'),x(\xi'-h),u)\big| \leq \varepsilon,\\[0.2cm]
\big|f^0(\xi,x(\xi),x(\xi-h),u) - f^0(\xi',x(\xi'),x(\xi'-h),u)\big| \leq \varepsilon
\end{array}
\end{equation*}
for any $\xi,\xi' \in [\tau,\vartheta]$ satisfying $|\xi - \xi'| \leq \delta$, any $u \in \mathbb U$, and any Lipschitz continuous $x(\cdot) \in \Lambda(\tau,w(-0),w(\cdot))$ satisfying  (\ref{def:neutral_type_equation}). Let $k \in \mathbb N$ be such that $\Delta t := (t - \tau)/k \leq \delta$ and $t_i = \tau + i \Delta t$, $i \in \overline{0,k}$. Let the function $u(\cdot) \colon [\tau,\vartheta] \mapsto \mathbb U$ and the Lipschitz continuous function $x(\cdot)$ satisfy (\ref{def:neutral_type_equation}) and the following feedback rule:
\begin{equation*}
u(\xi) = u_i,\quad \xi \in [t_i,t_{i+1}),\quad i \in \overline{0,k-1}
\end{equation*}
where, in accordance with (\ref{def:hamiltonian}), the value $u_i \in \mathbb U$ can be taken satisfying $H(t_i,x(t_i),x(t_i-h),s) = \langle f(t_i,x(t_i),x(t_i-h),u_i), s \rangle + f^0(t_i,x(t_i),x(t_i-h),u_i)$. Then, taking into account (\ref{def:omega}) and (\ref{def:value_functional}), we derive
\begin{equation*}
\begin{array}{c}
\hat{\varphi}(t_{i+1},x_{t_{i+1}}(\cdot)) + \omega(t_i,t_{i+1},x(\cdot),s) + 3 (t_{i+1} - t_i) \varepsilon\\[0.2cm]
\geq
\displaystyle\hat{\varphi}(t_{i+1},x_{t_{i+1}}(\cdot)) + \int_{t_i}^{t_{i+1}} f^0(\xi,x(\xi),x(\xi-h),u(\xi)) \mathrm{d} \xi \geq \hat{\varphi}(t_{i},x_{t_{i}}(\cdot))
\end{array}
\end{equation*}
for any $i \in \overline{0,k-1}$. Using this estimate for each $i \in \overline{0,k-1}$ and $\varepsilon > 0$, we conclude (\ref{def:lower_minmax_solution}).
\end{proof}


%
%



\end{document}